\crefname{claim}{Claim}{Claims}
\crefname{conjecture}{Conjecture}{Conjectures}
\crefname{figure}{Figure}{Figures}
\newcommand{\children}{\textsf{children}}
\newcommand{\parent}{\textsf{parent}}
\newcommand{\anc}{\textsf{ancestors}}
\newcommand{\desc}{\textsf{descendants}}
\newtheorem*{rep@theorem}{\rep@title}
\newcommand{\newreptheorem}[2]{%
\newenvironment{rep#1}[1]{%
 \def\rep@title{#2 \ref{##1}}%
 \begin{rep@theorem}}%
 {\end{rep@theorem}}}
\newtheorem{theorem}{Theorem}[section]
\newtheorem{lemma}[theorem]{Lemma}
\newtheorem{observation}{Observation}
\newtheorem{conjecture}{Conjecture}
\newtheorem{meta-conjecture}{Meta-conjecture}
\newtheorem{meta-problem}{Meta-problem}
\newtheorem{problem}{Problem}
\newtheorem{fact}{Fact}
\theoremstyle{definition}
\newcommand\tw{\text{tw}}
\newcommand\tww{\text{tww}}
\newcommand\otww{\text{otww}}
\newcommand\bn{\text{bn}}
\newcommand\sn{\text{sn}}
\renewcommand\geq{\geqslant}
\definecolor{myRed}{rgb}{0.68, 0.05, 0.0}
\colorlet{myBlue}{blue!70!black}
\colorlet{myViolet}{myBlue!55!myRed}
\definecolor{darkraspberry}{rgb}{0.53, 0.15, 0.34}
\definecolor{olive}{rgb}{0.42, 0.56, 0.14}
\title{Every Graph is Essential to Large Treewidth}
\author[1]{Bogdan Alecu}
\author[2]{\'Edouard Bonnet}
\author[1]{\\Pedro Bureo Villafana}
\author[2]{Nicolas Trotignon}
\affil[1]{School of Computer Science, University of Leeds, Leeds, LS2 9JT, UK}
\affil[2]{CNRS, ENS de Lyon, Université Claude Bernard Lyon 1, LIP,
  UMR 5668, Lyon, France}
\date{}
\begin{document}

\maketitle

\begin{abstract}
  We show that for every graph $H$, there is a~hereditary weakly sparse graph class $\mathcal C_H$ of unbounded treewidth such that the $H$-free (i.e., excluding $H$ as an induced subgraph) graphs of $\mathcal C_H$ have bounded treewidth.
  This refutes several conjectures and critically thwarts the quest for the unavoidable induced subgraphs in classes of unbounded treewidth, a~wished-for induced counterpart of the Grid Minor theorem.
  We actually show a~stronger result: For every positive integer $t$, there is a~hereditary graph class $\mathcal C_t$ of unbounded treewidth such that for any graph $H$ of treewidth at~most~$t$, the $H$-free graphs of $\mathcal C_t$ have bounded treewidth. 
  Our construction is a variant of so-called \emph{layered wheels}.

  We also introduce a framework of abstract layered wheels, based on their most salient properties.
  In particular, we streamline and extend key lemmas previously shown on individual layered wheels.
  We believe that this should greatly help develop this topic, which appears to be a~very strong yet underexploited source of counterexamples. 
\end{abstract}

\section{Introduction}

A~possible reading of the Grid Minor theorem of Robertson and Seymour~\cite{ROBERTSON198692} is that for every class $\mathcal C$ of unbounded treewidth there is a~family $\mathcal F$ of subdivided walls of unbounded treewidth, such that every graph of $\mathcal F$ is a~subgraph of some graph in~$\mathcal C$.    
Said informally, this identifies the subdivided walls as the unavoidable subgraphs witnessing large treewidth.
Recently, some effort has been put into unraveling the unavoidable induced subgraphs of classes of unbounded treewidth; see for instance the series ``\emph{Induced subgraphs and tree decompositions}''~\cite{istd-series}.

In this line of research, the holy grail would be a~hereditary family $\mathcal F^*$ (analogous to the class of all subdivided walls in the subgraph case), ideally comprising a~few canonical families of unbounded treewidth, such that for every class $\mathcal C$ of unbounded treewidth there is a~subfamily $\mathcal F \subseteq \mathcal F^*$ of unbounded treewidth with the property that every member of~$\mathcal F$ is an induced subgraph of some graph in~$\mathcal C$. 
A~first inspection reveals that $\mathcal F^*$ has to contain (an infinite subfamily of) all complete graphs, all complete bipartite graphs, all subdivided walls, and all the line graphs of subdivided walls. 
This however is not enough.
Quickly, several incomparable families were discovered that had to be added to $\mathcal F^*$: the \emph{layered wheel} of Sintiari and Trotignon~\cite{layered-1}, the \emph{Pohoata--Davies grid}~\cite{Pohoata14,Davies22}, the \emph{death star} of~Bonamy et al.~\cite{Bonamy24}.
One can still entertain hopes that all these constructions can be unified in a~common, and relatively descriptive family.

By generalizing the layered wheel construction, we put an end to this endeavor by showing that no family $\mathcal F^*$ can actually work, except the class of all graphs.
A~class is \emph{weakly sparse} if it excludes, for some finite integer $t$, the biclique $K_{t,t}$ as a~subgraph.\footnote{All the other notions used without a~definition in this introduction are defined in~\cref{sec:prelim}.}

\begin{theorem}\label{thm:main-one-graph}
  For every graph $H$, there is a~hereditary weakly sparse class~$\mathcal C_H$ of unbounded treewidth such that the subclass of $H$-free graphs of~$\mathcal C_H$ has bounded treewidth.
\end{theorem}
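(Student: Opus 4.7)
The plan is to prove the stronger statement promised in the abstract---for every positive integer $t$ there is a~single hereditary weakly sparse class $\mathcal C_t$ of unbounded treewidth whose $H$-free subclass has bounded treewidth for every graph $H$ with $\tw(H) \le t$---and to read off \cref{thm:main-one-graph} as the case $t = \tw(H)$. I would construct $\mathcal C_t$ as the hereditary closure of a~parameterized family $\{L_n\}_{n \ge 1}$ of \emph{abstract layered wheels}: $L_n$ is built from $n$ nested ``layers'' linked by long subdivided paths, together with one or more ``hat'' vertices per layer whose neighbourhoods on the subdivisions are prescribed by a~carefully designed combinatorial pattern aimed at capturing every graph of treewidth at most~$t$.

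Three of the four required properties should go through in the standard layered-wheel style once the right abstraction is fixed. Unbounded treewidth of $\{L_n\}_n$ follows from the fact that contracting the subdivided paths inside $L_n$ exposes a~wheel, wall, or grid minor whose size grows with $n$, forcing $\tw(L_n) \to \infty$. Weak sparsity follows because every edge of $L_n$ is either internal to a~long subdivision or incident to a~hat vertex whose attachment to any single layer has bounded size, so no fixed biclique $K_{c,c}$ arises as a~subgraph. Heredity is immediate from the definition. I would first isolate the precise axioms used and prove these three facts once and for all inside the abstract layered-wheel framework advertised in the abstract.

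The decisive step---and the main obstacle---is showing that for every $H$ with $\tw(H) \le t$, every $H$-free induced subgraph $G$ of $L_n$ has treewidth bounded solely in terms of $H$. Equivalently, any induced subgraph of $L_n$ spanning sufficiently many ``active'' layers must contain $H$ as an induced subgraph. I would pursue this via two coupled ingredients: (a)~a~Ramsey-type lemma asserting that, among the many subdivided paths and hat attachments surviving in a~large induced subgraph, one finds a~large \emph{canonical} substructure on which the attachment pattern is uniform; and (b)~an embedding lemma showing that every treewidth-$t$ graph $H$ sits as an induced subgraph inside this canonical substructure, exploiting that bounded-treewidth graphs admit induced embeddings into sufficiently rich subdivided templates.

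The technically delicate part is designing the hat-attachment patterns so that ingredient (b) succeeds \emph{uniformly} across the entire family of graphs of treewidth at most~$t$, without breaking weak sparsity or spoiling the unbounded-treewidth lower bound established in the second paragraph. I expect most of the combinatorial weight of the proof, and the substantive departure from earlier layered wheels, to sit precisely in this design.
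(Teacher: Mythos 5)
Your high-level plan matches the paper's: prove the stronger, $t$-parameterized statement by building a single layered wheel $W_t$ and read off \cref{thm:main-one-graph} with $t=\tw(H)$. But both your construction and your key lemma depart from the paper in ways that leave genuine gaps.

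\textbf{Construction.} You describe $L_n$ as layers linked by \emph{long subdivided paths} with a few \emph{hat} vertices whose attachments have bounded size, in the spirit of the original Sintiari--Trotignon layered wheels. This is not rich enough. Those constructions are deliberately $K_4$-free or triangle-free; once you subdivide, almost every vertex has degree~2, so an induced $K_4$ (which has treewidth~$3$, hence must appear in $H$-free-forcing for $t\ge 3$) can only live among the scarce hats. The paper's $G_t$ is a very different object: at \emph{every} node $u$ it spawns, for every ordered pair $(B,R)$ of disjoint subsets of $N^{\uparrow}[u]$ with $|B\cup R|\le t$, a child $v_{B,R}$ black-adjacent to $B$ and red-adjacent to $R$. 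There is no subdivision; the richness (all possible adjacency patterns to the at most $t+1$ upward neighbours) is baked into every single branching. The real graph of this trigraph is $W_t$, and $\mathcal C_t$ consists of its finite induced subgraphs.

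\textbf{Key lemma.} You propose a two-step argument: (a) a Ramsey-type lemma extracting a large \emph{canonical uniform substructure} from any large induced subgraph, and (b) an embedding lemma placing every treewidth-$t$ graph $H$ as an induced subgraph inside that canonical piece. Two problems. First, the paper entirely avoids Ramsey: because every node of $T_t$ has a child of \emph{every} admissible type, one can greedily extend a partial embedding of $H$ vertex by vertex; either the extension succeeds, or one finds a downward path hitting $X$ in few vertices (this is the \emph{bounded-branch property}, \cref{lem:bounded-ray-tw}). A Ramsey extraction would instead have to exhibit a clean, rich substructure inside an arbitrary large-treewidth induced subgraph, which is a much stronger and unsubstantiated claim. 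Second, and more fundamentally, your proposal never addresses the induced-versus-non-induced distinction. A graph $H$ of treewidth $\le t$ embeds as a \emph{subgraph} into a chordal graph of clique number $t+1$, not as an induced subgraph. The paper resolves this with the trigraph device: one takes a chordal completion of $H$, colours its original edges black and its fill-in edges red, builds a tree representation of this chordal \emph{tri}graph (\cref{lem:tr}), and then embeds the coloured version into $G_t$ so that black edges land on real edges of $W_t$ and red edges on virtual (absent-in-$W_t$) edges. Without something playing the role of these virtual edges, your ingredient (b) cannot go through: you would be trying to find chordal graphs (not arbitrary treewidth-$t$ graphs) as induced subgraphs, and the fill-in edges would be in your way.

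In short, the shape of your argument (stronger theorem via layered wheel, unbounded treewidth via minors/brambles, heredity by fiat, bounded treewidth via balanced separators) is right, but the two load-bearing components---the all-types-as-children construction, and the chordal-trigraph/tree-representation greedy embedding---are missing, and the Ramsey route you sketch in their place does not obviously close.
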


Hence, if there was a~graph $H$ not in $\mathcal F^*$, the class~$\mathcal C_H$ of~\cref{thm:main-one-graph} would not admit any desirable subfamily $\mathcal F \subseteq \mathcal F^*$.
We actually show a~significantly stronger result than~\cref{thm:main-one-graph}.

\begin{theorem}\label{thm:main-bdd-tw}
   For every positive integer $t$, there is a~hereditary (weakly sparse) class~$\mathcal C_t$ of unbounded treewidth such that for any graph $H$ of treewidth at~most~$t$, the subclass of $H$-free graphs of $\mathcal C_t$ has bounded treewidth.
\end{theorem}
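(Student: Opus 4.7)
The plan is to construct, for each positive integer $t$, a~family of ``thickened'' layered wheels $(W_{t,k})_{k \geq 1}$ and define $\mathcal C_t$ as its hereditary closure. The goal is to arrange three properties: first, $\tw(W_{t,k}) \to \infty$ with $k$ (so $\mathcal C_t$ has unbounded treewidth); second, every $W_{t,k}$ is $K_{s,s}$-free for some $s = s(t)$ (so $\mathcal C_t$ is weakly sparse); and third, the crucial \emph{universality} property: there is a~function $g(t, n)$ such that any induced subgraph $G$ of some $W_{t,k}$ with $\tw(G) \geq g(t, n)$ contains every graph $H$ of treewidth at~most~$t$ and order at~most~$n$ as an~induced subgraph. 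The theorem follows immediately: fixing $H$ of treewidth $\leq t$, any $H$-free $G \in \mathcal C_t$ must satisfy $\tw(G) < g(t, |V(H)|)$.

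For the construction, I~would take the layered wheel of Sintiari--Trotignon~\cite{layered-1} as a~starting point and enrich each level with a~gadget encoding every possible adjacency pattern on up~to $t+1$ vertices---precisely the bag size of a~width-$t$ tree decomposition. Consecutive levels are still joined by long induced subdivisions, and the number of ``ports'' per level is bounded by a~function of $t$, which ensures $K_{s(t),s(t)}$-freeness. Unbounded treewidth of $W_{t,k}$ comes from the abstract layered-wheel framework announced in the abstract: the skeleton still contains enough layered structure (e.g.~a~subdivided wall subgraph, or the black-box treewidth lower bound the framework provides) to force treewidth to grow with~$k$.

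The heart of the proof is establishing the universality property. Given $H$ of treewidth at~most~$t$, fix a~width-$t$ tree decomposition $(T, (X_u)_u)$ of~$H$. I~would embed $H$ into $W_{t,k}$ by processing $T$ from a~root down: assign each bag $X_u$ to a~distinct level of the wheel, use the gadget at that level to realize the exact adjacencies inside $X_u$, and route the cross-bag adjacencies through the subdivided paths between consecutive levels. The induced-subgraph condition---no extra edges---is enforced by the rigidity of the wheel, where distant levels communicate only through the prescribed subdivisions. The main obstacle is that $\mathcal C_t$ is hereditary, so one must prove such an~embedding not just inside $W_{t,k}$ itself but inside any induced subgraph $G \subseteq W_{t,k}$ of sufficiently large treewidth. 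This demands a~``recovery lemma'' showing that any such $G$ inherits enough of the layered structure (a~large sub-wheel with intact gadgets and long induced subdivisions) to support the embedding scheme. Proving this robustly---and uniformly across wheel variants---is where the abstract layered-wheel framework introduced in the paper is presumably indispensable, streamlining and reusing the extraction lemmas previously proved for individual wheels.
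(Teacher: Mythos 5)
Your high-level plan is sound and close in spirit to the paper: build a single (or a family of) layered wheel(s) whose gadgets encode all adjacency patterns on at most~$t+1$ vertices, define $\mathcal C_t$ as the finite induced subgraphs, and reduce the theorem to a ``universality'' statement saying that large treewidth inside $\mathcal C_t$ forces every small $H$ of treewidth at most~$t$ to appear. The paper does exactly this: for each vertex~$u$ and each ordered pair $(B,R)$ of disjoint subsets of $N^\uparrow[u]$ with $|B\cup R|\le t$, it creates a child realising that exact adjacency pattern, and the connection to treewidth~$\le t$ is made through \emph{chordal trigraphs} with total clique number~$\le t+1$ (i.e.\ chordal completions), together with a notion of \emph{tree representation} of such trigraphs that replaces your width-$t$ tree decomposition as the guide for the embedding. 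Your picture of ``joining consecutive levels by long induced subdivisions'' is off, though: in the actual construction, non-layer edges always go directly between a vertex and a bounded set of its ancestors (the upward-restricted property), with no subdivision gadget.

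The genuine gap is the ``recovery lemma.'' You correctly identify that the whole difficulty is proving the universality statement for \emph{arbitrary} induced subgraphs $G=W_t[X]$, not just for the wheel itself, but the route you sketch---extract from any large-treewidth $G$ a clean sub-wheel with intact gadgets and then embed $H$ into it---is precisely the step you cannot afford to assume; large treewidth of an induced subgraph does not obviously yield a clean layered-wheel substructure, and proving such a regularity/extraction lemma is at least as hard as the theorem. The paper sidesteps this entirely with a \emph{win-win} argument (\cref{lem:bounded-ray-tw}): fixing a tree representation of a chordal trigraph $H'$ whose real graph is $H$, and processing its vertices in an order where each new vertex is a leaf of the current subtree, one shows by induction that from any node~$u$ of $T_t$ either the current partial copy of $H'$ can be extended inside $X$ along a downward path, or there is a downward path from~$u$ meeting $X$ in fewer than $|V(H)|$ vertices. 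The latter outcome, applied at a carefully chosen vertex (\cref{lem:balanced-vertex}) together with the upward-restriction of the wheel (\cref{obs:separator}), yields a balanced separator of size $O(t+|V(H)|)$, and \cref{lem:tw-sn-convert} finishes. No recovery of a sub-wheel is ever needed---that is the key idea missing from your proposal---and without it the proof as sketched does not go through.
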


With a~minor adjustment to the construction for~\cref{thm:main-bdd-tw}, we obtain the following triangle-free variation.

\begin{theorem}\label{thm:main-triangle-free}
   For every positive integer $t$, there is a~hereditary (weakly sparse) triangle-free class~$\mathcal C'_t$ of unbounded treewidth such that for any triangle-free graph $H$ of treewidth at~most~$t$, the subclass of $H$-free graphs of $\mathcal C'_t$ has bounded treewidth.
\end{theorem}

One can observe that a~hereditary class satisfying~\cref{thm:main-bdd-tw} (or \cref{thm:main-triangle-free}) has to be weakly sparse (in contrast with \cref{thm:main-one-graph} when $H$ is a~complete graph or a~complete bipartite graph). 

In one sweep, \cref{thm:main-bdd-tw} refutes several conjectures stating that every hereditary (weakly sparse) class of unbounded treewidth contains induced subgraphs of unbounded treewidth with some additional property.
Here are some examples.

\begin{conjecture}[Hajebi, Conjecture 1.15 of~\cite{Hajebi24}, previously refuted in~\cite{chudnovsky2024treewidthmaximumcliques}]\label{conj:Hajebi1}
  Every hereditary weakly sparse class of unbounded treewidth contains a~subclass of 2-degenerate graphs of unbounded treewidth.
\end{conjecture}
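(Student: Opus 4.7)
The plan is to derive the falsity of \cref{conj:Hajebi1} as an immediate consequence of \cref{thm:main-bdd-tw}, which is already stated above. Concretely, I~would invoke \cref{thm:main-bdd-tw} with parameter $t=3$ to obtain a~hereditary weakly sparse class $\mathcal C_3$ of unbounded treewidth such that, for every graph $H$ of treewidth at~most~$3$, the $H$-free graphs of $\mathcal C_3$ have bounded treewidth.

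The next step is to choose a~suitable $H$ with which to cover every 2-degenerate graph. The natural choice is $H = K_4$: it has treewidth exactly~$3$, so it is eligible in the conclusion of \cref{thm:main-bdd-tw}, and every vertex of $K_4$ has degree~$3$, so $K_4$ itself is not 2-degenerate. Consequently no 2-degenerate graph contains $K_4$ even as a~(not necessarily induced) subgraph, and in particular every 2-degenerate graph is $K_4$-free.

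Putting the two steps together, the 2-degenerate members of $\mathcal C_3$ are contained in the subclass of $K_4$-free graphs of $\mathcal C_3$, which has bounded treewidth by the defining property of~$\mathcal C_3$. Hence $\mathcal C_3$ is a~hereditary weakly sparse class of unbounded treewidth whose 2-degenerate subclass has bounded treewidth, contradicting \cref{conj:Hajebi1}. The genuine obstacle is of course packaged inside \cref{thm:main-bdd-tw} itself (the construction and analysis of the abstract layered wheel $\mathcal C_t$); the reduction to 2-degeneracy is then essentially the one-line remark that $K_4$ is not 2-degenerate.
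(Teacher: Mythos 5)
Your proposal is correct and takes essentially the same approach as the paper: the paper deduces the refutation by applying \cref{thm:main-one-graph} with $H=K_4$, together with the observation that every 2-degenerate graph is $K_4$-free. Going through \cref{thm:main-bdd-tw} with $t=3$ and then specializing to $H=K_4$ is the same argument, since \cref{thm:main-one-graph} is itself obtained from \cref{thm:main-bdd-tw} by taking $t$ to be the treewidth of $H$.
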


\begin{conjecture}[Hajebi, Conjecture 1.14 of~\cite{Hajebi24}, previously refuted in~\cite{chudnovsky2024treewidthmaximumcliques}]\label{conj:Hajebi2}
  Every hereditary weakly sparse class of unbounded treewidth contains a~subclass of clique number at~most~4 (resp.\ at~most~$c$ for some fixed integer $c \geqslant 4$) and unbounded treewidth.
\end{conjecture}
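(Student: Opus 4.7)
The plan is to build $\mathcal{C}_t$ as the hereditary closure of a family $\{W_{t,d}\}_{d \geq 1}$ of ``wide'' layered wheels, generalising the construction of Sintiari and Trotignon. Each $W_{t,d}$ has $d$ layers; a layer carries a long path-like ``spine'' with many independent attachment sites of size roughly $t+1$, together with hub vertices reaching into lower layers in a carefully prescribed pattern. The three targets are that $\tw(W_{t,d}) \to \infty$ with $d$, that the class is weakly sparse, and, as the main point, that for every graph $H$ of treewidth at most $t$ there is a depth $d_0(H)$ such that $W_{t,d}$ contains $H$ as an induced subgraph for all $d \geq d_0(H)$. This last property is equivalent to the conclusion of the theorem.

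The easier steps are weak sparseness and unbounded treewidth. Weak sparseness would follow by ensuring that every hub vertex has only a bounded number of neighbours inside any single earlier layer, which caps the biclique number of the whole construction. Unbounded treewidth would follow from a standard layered-wheel bramble argument: the spines together with the hubs at each depth form touching connected subgraphs whose pairwise-intersection pattern forces any tree decomposition of $W_{t,d}$ to use bags of size growing with $d$. Both arguments should be phrasable within the ``abstract layered wheels'' framework announced in the introduction, which is designed precisely to streamline such proofs.

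The substantial step, and the main obstacle, is showing that deep layered wheels are universal for graphs of treewidth at most $t$. My approach would be constructive: given $H$ with a tree decomposition $(T, \{B_x\}_{x \in V(T)})$ of width at most $t$, I would route $T$ along branches of the layered structure of $W_{t,d}$ (for $d$ a suitable function of $|V(H)|$), realise each bag $B_x$ as an induced copy inside an attachment site of its assigned layer, and use the hub/spine links between consecutive layers to realise exactly the adjacencies of $H$ between adjacent bags. A pigeonhole/Ramsey argument over the abundance of attachment sites per layer should allow one to simultaneously enforce every desired adjacency and prevent every undesired one.

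The hardest part will be designing the inter-layer connection pattern so that it is simultaneously \emph{sparse enough} to avoid a fixed biclique and \emph{expressive enough} to realise arbitrary width-$\leq t$ adjacency patterns needed for universality: these two requirements push in opposite directions. A further delicacy is ensuring that the embedding does not, as a side-effect, introduce an unintended induced subgraph elsewhere in $W_{t,d}$ that would later foil the argument. Isolating clean axioms for abstract layered wheels that encode precisely what is needed for the universality/treewidth trade-off, so that these tensions can be resolved once rather than re-negotiated for each variant, is what I expect to be the main conceptual contribution of the proof.
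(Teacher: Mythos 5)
The conjecture here is refuted by applying \cref{thm:main-one-graph} (or \cref{thm:main-bdd-tw}) with $H$ a suitable clique, so the real content is constructing $\mathcal C_t$; your proposal attempts exactly that, but there is a fundamental gap in what you claim needs to be proved.

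You write that the target property --- that for every $H$ of treewidth at most $t$ there is a $d_0(H)$ such that $W_{t,d}$ contains $H$ as an induced subgraph for $d \geq d_0(H)$ --- ``is equivalent to the conclusion of the theorem.'' It is not. Universality (every small-treewidth $H$ appears somewhere in the class) is necessary for the theorem but nowhere near sufficient: the class of \emph{all} graphs is hereditary, has unbounded treewidth, and is universal for every $H$, yet its $K_3$-free members already have unbounded treewidth. What must actually be shown is that the \emph{absence} of $H$ from a finite induced subgraph $G$ of the layered wheel forces $G$ to have bounded treewidth. That is a far stronger statement than ``$H$ embeds,'' and the Ramsey/pigeonhole embedding argument you sketch only delivers the weaker one.

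The paper's mechanism is a dichotomy, not an embedding. The construction introduces, for each vertex $u$ and each pair $(B,R)$ of disjoint subsets of $u$'s bounded upward neighbourhood with $|B\cup R|\leq t$, a child realising exactly that (black/red) adjacency pattern to the ancestors; black edges form $W_t$ and red edges are virtual. The key statement (\cref{lem:bounded-ray-tw}) says: if $G_t[X]$ is $H'$-free for a chordal trigraph $H'$ whose total graph has clique number at most $t+1$, then from \emph{every} node of $T_t$ there is a downward branch containing at most $|V(H')|-1$ vertices of $X$. The proof tries to extend a partial embedding of $H'$, vertex by vertex following a tree representation of the chordal trigraph, along branches of $T_t$; at each step one either extends the embedding (eventually contradicting $H'$-freeness) or finds a branch avoiding $X$ below the current frontier. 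This bounded-branch property then yields small $\alpha$-balanced separators of the form $N^\uparrow[u]\cup N^\uparrow[u^-]\cup N^\uparrow[u^+]\cup P_{u^-}\cup P_{u^+}$, hence bounded treewidth via \cref{lem:tw-sn-convert}. None of this machinery --- the trigraph bookkeeping of black vs.\ red edges, the tree-representation of a chordal completion of $H$ of clique number $\leq t+1$, and above all the ``embed-or-find-sparse-branch'' induction --- appears in your plan, and it is precisely where the difficulty lies. Your two peripheral goals (weak sparseness via bounded upward degree, unbounded treewidth via a layer bramble) match the paper's \cref{lem:upward-degree} and \cref{obs:tw-of-Ct}, but the central claim is mis-stated and the proposed embedding argument would not close it.
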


\begin{conjecture}[Trotignon]\label{conj:string-graphs}
  Every hereditary class of unbounded treewidth contains a~subclass of string graphs of unbounded treewidth or a~$K_{\ell,\ell}$ induced minor for every~$\ell$.
\end{conjecture}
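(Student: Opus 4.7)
The plan is to refute \cref{conj:string-graphs} by exhibiting, through \cref{thm:main-bdd-tw}, a~single hereditary class of unbounded treewidth that contradicts \emph{both} disjuncts of the conjecture. The natural candidate is the class $\mathcal C_t$ given by \cref{thm:main-bdd-tw} for an~appropriately chosen~$t$.

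For the first disjunct (``a~subclass of string graphs of unbounded treewidth''), I~would fix a~graph $H^\star$ that is \emph{not} a~string graph; such graphs are classical (and only finitely many edges are needed to forbid a~planar realisation of the intersection pattern, so $H^\star$ may be chosen to have small treewidth). Set $t := \tw(H^\star)$ and apply \cref{thm:main-bdd-tw} to obtain $\mathcal C_t$. Since the family of string graphs is closed under induced subgraphs, every string graph in $\mathcal C_t$ is automatically $H^\star$-free, and hence lies in the $H^\star$-free subclass of $\mathcal C_t$, which has bounded treewidth by \cref{thm:main-bdd-tw}. Thus no subclass of string graphs of $\mathcal C_t$ has unbounded treewidth.

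For the second disjunct, one must exhibit an~$\ell$ such that \emph{no} graph in $\mathcal C_t$ contains $K_{\ell,\ell}$ as an induced minor. Weak sparsity alone does not suffice here: the $1$-subdivision of $K_{s,s}$ is weakly sparse (it has girth $8$) yet contains $K_{s,s}$ as an~induced minor, obtained by taking each branch set on one side to consist of an~original vertex together with all its incident subdivision vertices, and each branch set on the other side to be a~single original vertex. So I~would extract this disjunct from the layered structure of $\mathcal C_t$ itself: any candidate family of $2\ell$ disjoint connected branch sets witnessing a~$K_{\ell,\ell}$ induced minor would have to respect the linear ordering of the layers in a~controlled way, and one can hope to force a~contradiction with the thin/tree-like branching across layers for $\ell$ sufficiently large.

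The main obstacle is clearly this second step. Step~one is essentially a~one-line consequence of \cref{thm:main-bdd-tw} together with the hereditary nature of string graphs, whereas controlling \emph{induced minors} (as opposed to subgraphs or induced subgraphs) genuinely requires a~combinatorial analysis internal to the construction. My expectation is that excluding a~large biclique induced minor is exactly the kind of property that the paper's promised framework of \emph{abstract layered wheels} should axiomatise: once one isolates the right ``thinness'' axiom on the layering, the desired bound on $\ell$ should follow by pigeonholing the branch sets across the layers.
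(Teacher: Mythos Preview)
Your handling of the first disjunct is correct and matches the paper: choose $H^\star$ to be a~non-string graph of small treewidth (the paper uses the $1$-subdivision of $K_5$, which has treewidth~$4$), and then every string graph in $\mathcal C_t$ is $H^\star$-free, hence of bounded treewidth by \cref{thm:main-bdd-tw}.

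The second disjunct is where your proposal diverges from the paper, and where it has a~genuine gap. You correctly observe that weak sparsity alone does not exclude large $K_{\ell,\ell}$ induced minors, and you propose to extract the bound by a~direct combinatorial analysis of how branch sets can sit across the layers. You do not carry this out, and it is not clear that the abstract layered-wheel axioms the paper isolates would yield such a~statement; the paper proves nothing of the sort.

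The paper's actual argument for the second disjunct is completely different and much shorter: it applies \cref{thm:main-bdd-tw} a~\emph{second} time, now with $H=K_4$ (treewidth~$3 \leqslant t$). This shows that the $K_4$-free graphs of $\mathcal C_t$ have bounded treewidth; in particular $\mathcal C_t$ contains no subdivided walls (nor their line graphs) of unbounded treewidth, equivalently no arbitrarily large grid as an induced minor. One then invokes a~result of Chudnovsky, Hajebi, and Spirkl~\cite{chudnovsky2024cbim}: a~class admitting $K_{\ell,\ell}$ induced minors for all~$\ell$ must contain either arbitrarily large wall induced minors or $K_4$-free induced subgraphs of unbounded treewidth. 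Neither holds in $\mathcal C_t$, so some~$\ell$ is excluded. The key idea you missed is that excluding biclique induced minors need not be argued structurally at all: it is reduced, via an external black box, to another instance of the very theorem being applied.
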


Applying \cref{thm:main-one-graph} with $H$ being the 4-vertex clique, and 5-vertex clique (resp.~$(c+1)$-vertex clique) refutes Conjectures~\ref{conj:Hajebi1} and~\ref{conj:Hajebi2}, respectively.
Applying \cref{thm:main-bdd-tw} with $t=4$ refutes~\cref{conj:string-graphs} as graphs of treewidth at~most~4 includes the \mbox{1-subdivision} of the \mbox{5-vertex} clique and $K_4$.
The former graph ensures that there is no subclass of~$\mathcal C_t$ both of string graphs and of unbounded treewidth.
Besides, $\mathcal C_t$ does not contain as induced subgraphs (line graphs of) subdivided walls of arbitrarily large treewidth (as these graphs are $K_4$-free), or equivalently $\mathcal C_t$ does not contain arbitrarily large walls (or grids) as induced minors~(see for instance~\cite{ABOULKER2021103394}).
Thus, $\mathcal C_t$ cannot have $K_{\ell, \ell}$ induced minors for arbitrarily large~$\ell$, since a~result of Chudnovsky et al.~\cite{chudnovsky2024cbim} implies that such classes have  arbitrarily large walls as induced minors or $K_4$-free induced subgraphs of arbitrarily large treewidth.

Chudnovsky and Trotignon~\cite{chudnovsky2024treewidthmaximumcliques} note that the class they use to refute \cref{conj:Hajebi2} with $c \geqslant 4$ has clique number $c+2$, and ask if this can be done with a~class of clique number $c+1$; see~Question 1.8.
Our new construction achieves this as a~byproduct, since the clique number of the class $\mathcal C_t$ of \cref{thm:main-bdd-tw} is $t+1$ (see~\cref{obs:Ct-clique-number}).

%Rose McCarty and the fourth author first observed that the then-existing layered wheels as well as the Pohoata-Davies grid are string graphs, while the death star admits $K_{\ell,\ell}$ induced minors for arbitrarily large~$\ell$, which motivated~\cref{conj:string-graphs}. 
%We also note that Chudnovsky, Fischer, Hajebi, Spirkl, and Walczak disprove the strengthening of~\cref{conj:string-graphs} asking for a~subclass of outerstring graphs instead~\cite{outerstring-conjecture}.
Rose McCarty discovered a~construction of a~weakly sparse family of unbounded treewidth of outerstring graphs (also discovered independently in~\cite{Hickingbotham-circle} and~\cite{sparseOuterString}), and together with the fourth author deduced from it that the then-existing layered wheels as well as the Pohoata--Davies grid are string graphs, while the death star admits $K_{\ell,\ell}$ induced minors for arbitrarily large~$\ell$.
This motivated~\cref{conj:string-graphs}.
We also note that Chudnovsky, Fischer, Hajebi, Spirkl, and Walczak disprove a~variant of~\cref{conj:string-graphs} asking for a~subclass of triangle-free outerstring graphs instead~\cite{outerstring-conjecture}; which~\cref{thm:main-triangle-free} more generally refutes with string graphs instead of outerstring graphs.

Let us unify the previous conjectures under (two variants of) a~generalizing meta-conjecture.

\begin{meta-conjecture}[Meta-conjecture$(\Pi)$]\label{meta-conj:plain}
  Every hereditary class $\mathcal C$ of unbounded treewidth contains a~subclass $\mathcal C' \subseteq \mathcal C$ of unbounded treewidth with property $\Pi$.
\end{meta-conjecture}

\begin{meta-conjecture}[Meta-conjecture-ws$(\Pi)$]\label{meta-conj:ws}
  Every hereditary weakly sparse class $\mathcal C$ of unbounded treewidth contains a~subclass $\mathcal C' \subseteq \mathcal C$ of unbounded treewidth with property $\Pi$.
\end{meta-conjecture}

One can see that~\cref{conj:Hajebi1,conj:Hajebi2} are particular cases of~\cref{meta-conj:ws}, and \cref{conj:string-graphs}, of~\cref{meta-conj:plain}.
We observe that Meta-conjecture($\Pi$) straightforwardly implies Meta-conjecture-ws($\Pi$).
Note also that we do not require $\mathcal C'$ to be itself hereditary, although for \emph{some} properties $\Pi$ this could be ensured for free.

Let us say that a~property $\Pi$ is \emph{finitely-hereditary} if there is a~(finite) graph $H$ such that if $H$ is an induced subgraph of some $G \in \mathcal C'$, then $\mathcal C'$ does not satisfy $\Pi$.
For instance, the properties of being 2-degenerate, having clique number at~most~4, or being a~string graph are all finitely-hereditary (due to the abovementioned respective graphs~$H$).
A~weaker assumption on $\Pi$ is that it is \emph{treewidth-hereditary}, i.e., that there is a~(finite) integer~$t$ such that if $\mathcal C'$ contains as (induced) subgraphs every graph of treewidth at~most~$t$ then $\mathcal C'$ does not satisfy~$\Pi$.

One can then reformulate \cref{thm:main-one-graph}, and its strengthening \cref{thm:main-bdd-tw}, this way.

\begin{reptheorem}{thm:main-one-graph}
  For every finitely-hereditary property $\Pi$, Meta-conjecture-ws$(\Pi)$ and thus Meta-conjecture$(\Pi)$ are refuted.
\end{reptheorem}

\begin{reptheorem}{thm:main-bdd-tw}
  For every treewidth-hereditary property $\Pi$, Meta-conjecture-ws$(\Pi)$ and thus Meta-conjecture$(\Pi)$ are refuted.
\end{reptheorem}

Thus in any further attempt to find properties forced by large treewidth, one has to consider properties $\Pi$ that are not finitely-hereditary nor, more generally, treewidth-hereditary.
A~result of Bonnet~\cite{Bonnet24} overcomes the very barrier of~\cref{thm:main-one-graph,thm:main-bdd-tw} by relying on the non-hereditary parameter of average degree (or edge density): for every $\varepsilon > 0$, every hereditary weakly sparse class of unbounded treewidth admits a~(non-hereditary) subclass of unbounded treewidth and average degree at~most~$2+\varepsilon$.
Recall indeed that we do not require the subfamily $\mathcal C'$ to be hereditary. 

Another property that is not treewidth-hereditary is that of having bounded twin-width.\footnote{Actually \emph{bounded treewidth} is also not treewidth-hereditary, but the associated meta-conjectures are trivially false.}
We thus give the following special case of~\cref{meta-conj:plain}, not refuted by our present work, and motivated by the fact that the weakly sparse layered wheels, the Pohoata--Davies grids (and their extensions), and the death star all have bounded twin-width.
\begin{conjecture}\label{conj:tww-tw}
  Every hereditary class of unbounded treewidth admits a~subclass of unbounded treewidth and bounded twin-width.
\end{conjecture}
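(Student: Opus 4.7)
The plan is to combine standard sparsity dichotomies with the abstract layered-wheel framework introduced in this paper. First, observe that the conjecture holds trivially in the \emph{dense} case: if $\mathcal C$ contains $K_{t,t}$ as an induced subgraph for arbitrarily large $t$, then the subclass $\{K_{t,t} : t \ge 1\} \cap \mathcal C$ is a witness, since $\tw(K_{t,t}) = t$ goes to infinity while each $K_{t,t}$ has twin-width~$0$ (the two sides form twin-classes, so two contractions reduce the graph to a single vertex). An analogous observation handles the case where $\mathcal C$ contains arbitrarily large cliques. So one may restrict to weakly sparse $\mathcal C$ of bounded clique number.

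Next, I would apply the induced-minor dichotomy of Chudnovsky et al.~\cite{chudnovsky2024cbim} invoked in the introduction. If $\mathcal C$ admits $K_{\ell,\ell}$ as induced minor for every~$\ell$, then it contains either arbitrarily large walls as induced minors (which, via subdivided walls embedded as induced subgraphs of $\mathcal C$, yields a planar, hence bounded twin-width, subclass of unbounded treewidth) or $K_4$-free induced subgraphs of arbitrarily large treewidth. If instead $\mathcal C$ has no large $K_{\ell,\ell}$ induced minor, one would hope that a separate structural argument from the induced-minor literature again yields sparse, planar-like induced subgraphs of unbounded treewidth, which automatically have bounded twin-width. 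Either way, the conjecture reduces to the configuration where $\mathcal C$ is weakly sparse, of bounded clique number, contains no large walls as induced minors, and yet is of unbounded treewidth.

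For this last configuration, the plan is to bootstrap on the abstract layered-wheel framework of this paper. The framework should include a generic lemma guaranteeing bounded twin-width for any family of graphs fitting its axioms; indeed, all concrete instances (the original layered wheels of~\cite{layered-1}, the new $\mathcal C_t$, the Pohoata--Davies grids, the death star) have bounded twin-width. The remaining task is to show that any hereditary class falling into the last configuration above must contain induced copies of a layered-wheel-like object, presumably by combining results from the \emph{Induced subgraphs and tree decompositions} series with the structural features isolated in the abstract framework. The main obstacle is precisely this step: no structural theorem currently guarantees the appearance of a layered-wheel pattern in an arbitrary hereditary $K_4$-free class of unbounded treewidth, and \emph{a~priori} such a class might evade every known extremal construction. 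A conservative fallback is a Ramsey-theoretic extraction inside a single $G \in \mathcal C$ of enormous treewidth: find a highly homogeneous induced subgraph $G' \subseteq G$ admitting a balanced tree-like decomposition with only finitely many interaction types across bags, and use this finiteness to bound $\tww(G')$ while keeping $\tw(G')$ large through the number of bags.
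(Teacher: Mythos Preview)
This statement is presented in the paper as an \emph{open conjecture}, not a theorem: the authors explicitly say it is ``not refuted by our present work'' and offer it as motivation, proving only the supporting fact (\cref{thm:tww-of-ur-lw}) that neat upward-restricted layered wheels have bounded twin-width---which is an obstacle to \emph{disproving} the conjecture via layered wheels, not a proof of it. So there is no proof in the paper to compare your proposal against; you are attempting to resolve an open problem.

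Your proposal does not do so. The reductions in your first two paragraphs are reasonable (though note that a wall as an induced minor yields a subdivided wall \emph{or the line graph of one} as an induced subgraph, not necessarily a planar graph; line graphs of subdivided walls are still of bounded twin-width, so this is fixable). But after the reductions you are left with the core case---a weakly sparse hereditary class of bounded clique number, no large wall induced minor, unbounded treewidth---and here you have no argument. You explicitly acknowledge that ``no structural theorem currently guarantees the appearance of a layered-wheel pattern'' in such a class, and indeed the whole point of \cref{thm:main-bdd-tw} is that such classes can avoid \emph{any} prescribed induced subgraph of bounded treewidth, so there is no reason to expect a recognizable layered-wheel pattern to appear. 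Your fallback ``Ramsey-theoretic extraction'' of a homogeneous subgraph with finitely many interaction types is a hope, not a proof: nothing prevents the interaction types from proliferating as treewidth grows, and bounding them is essentially equivalent to the conjecture itself. In short, the proposal is a correct identification of where the difficulty lies, but it does not overcome that difficulty.
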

We will come back to~\cref{conj:tww-tw}.

Say that a~graph $H$ is \emph{essential} if there is a~hereditary class $\mathcal C_H$ of unbounded treewidth such that the class $\{G \in \mathcal C_H~:~G \text{ is } H\text{-free}\}$ has bounded treewidth.
Call a family $\mathcal H$ of graphs \emph{essential} if there is a~hereditary class $\mathcal C_{\mathcal H}$ of unbounded treewidth such that, for every $H \in \mathcal H$, the class $\{G \in \mathcal C_{\mathcal H}~:~G \text{ is } H\text{-free}\}$ has bounded treewidth.
In this terminology, we can restate a~slightly weaker form of~\cref{thm:main-one-graph} and equivalent form of~\cref{thm:main-bdd-tw}.

\begin{theorem}\label{thm:main-one-graph-ess}
    Every graph is essential.  
\end{theorem}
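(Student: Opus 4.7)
The plan is to obtain \cref{thm:main-one-graph-ess} as an immediate corollary of the stronger~\cref{thm:main-bdd-tw}, which I assume as given. Fix an arbitrary graph~$H$ and let $t := \tw(H)$, a~finite nonnegative integer. Applying~\cref{thm:main-bdd-tw} with this~$t$ yields a~hereditary (in fact weakly sparse) class~$\mathcal C_t$ of unbounded treewidth with the property that the $H'$-free subclass of~$\mathcal C_t$ has bounded treewidth for \emph{every} graph~$H'$ with $\tw(H') \leq t$. Specializing to $H' := H$, which satisfies $\tw(H') \leq t$ by construction, and setting $\mathcal C_H := \mathcal C_t$, we immediately exhibit $H$ as essential in the sense of the definition introduced just before the theorem statement. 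Note that the class $\mathcal C_H$ depends on $H$ only through $t = \tw(H)$, so graphs of the same treewidth can share a~witness.

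The only thing to verify is that the quantifier structure of the conclusion matches the definition of essentiality, which asks for a~single hereditary class of unbounded treewidth whose $H$-free subclass has bounded treewidth; this is precisely what the specialization delivers, and no additional argument is required. Hence in this step there is no serious obstacle: the real content has been offloaded entirely onto~\cref{thm:main-bdd-tw}. Anticipating how~\cref{thm:main-bdd-tw} would itself be proved, the abstract points to a~layered-wheel construction, which I would expect to parametrize a~family of graphs by a~\emph{depth} parameter whose treewidth grows with the depth, and then argue that any induced subgraph of treewidth at~most~$t$ must be confined to boundedly many \emph{levels}, thereby inheriting bounded treewidth. The genuinely hard part lives there: one must secure this confinement \emph{uniformly} over all $H'$ with $\tw(H') \leq t$ while simultaneously ensuring that the overall treewidth of the depth-$k$ graph tends to infinity. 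But for~\cref{thm:main-one-graph-ess} itself, the derivation above is one line.
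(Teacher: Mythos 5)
Your derivation is correct and matches the paper's intent exactly: \cref{thm:main-one-graph-ess} is stated there merely as a restatement, obtained precisely by invoking \cref{thm:main-bdd-tw} (or \cref{thm:main-one-graph}) with a suitable~$t$. The only pedantic slip is setting $t := \tw(H)$, which may be~$0$ while \cref{thm:main-bdd-tw} is stated for positive~$t$; taking $t := \max(1,\tw(H))$ fixes this trivially.
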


\begin{theorem}
    \label{thm:main-bdd-tw-ess}
    For every $t \in \mathbb N$, the family of graphs of treewidth at~most~$t$ is essential.  
\end{theorem}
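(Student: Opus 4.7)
The plan is to observe that \cref{thm:main-bdd-tw-ess} is a direct reformulation of \cref{thm:main-bdd-tw} in the language of essential families, so no separate argument is needed. Fix $t \in \mathbb N$, let $\mathcal H$ be the family of all graphs of treewidth at most $t$, and apply \cref{thm:main-bdd-tw} to obtain the hereditary (weakly sparse) class $\mathcal C_t$ of unbounded treewidth. Setting $\mathcal C_{\mathcal H} := \mathcal C_t$ satisfies the definition of essentialness verbatim: the class is hereditary, has unbounded treewidth, and for every $H \in \mathcal H$, \cref{thm:main-bdd-tw} asserts that $\{G \in \mathcal C_t : G \text{ is } H\text{-free}\}$ has bounded treewidth. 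This is exactly what is required for $\mathcal H$ to be essential.

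Hence the whole content lives in \cref{thm:main-bdd-tw}, and the hard work (which is deferred to later sections of the paper) is to construct $\mathcal C_t$ explicitly and control its $H$-free subclasses. Following the blueprint advertised in the introduction, the natural approach is to realise $\mathcal C_t$ as the hereditary closure of a~family of abstract layered wheels, parameterised by~$t$, in such a~way that (i)~the family has unbounded treewidth, and (ii)~any layered wheel in it of sufficient depth/width contains, as an~induced subgraph, \emph{every} graph of treewidth at most~$t$. Property~(ii) would immediately force that excluding any fixed $H$ with $\tw(H) \le t$ kills all but finitely many wheels in the family, leaving a~class of bounded treewidth.

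The expected main obstacle, therefore, is not in the deduction of \cref{thm:main-bdd-tw-ess} from \cref{thm:main-bdd-tw}, but rather in establishing the universal embedding property~(ii): given an arbitrary graph $H$ with a~tree decomposition of width at most~$t$, one has to produce an induced copy of $H$ inside any deep enough layered wheel. A~natural route is to process $H$ along its tree decomposition, mapping each bag to a~small, appropriately chosen cluster of vertices lying on a~consecutive set of layers, and exploiting the wheel's rich connectivity between layers to realise exactly the required adjacencies while ruling out spurious ones. Combined with the layered-wheel machinery for establishing unbounded treewidth (typically via a~potential/averaging argument across layers), this would complete the proof of \cref{thm:main-bdd-tw}, from which \cref{thm:main-bdd-tw-ess} follows trivially as above.
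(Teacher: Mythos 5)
Your first paragraph is exactly the paper's treatment: the paper introduces \cref{thm:main-bdd-tw-ess} as an ``equivalent form'' of \cref{thm:main-bdd-tw}, offers no separate proof, and indeed none is needed --- setting $\mathcal C_{\mathcal H} := \mathcal C_t$ verifies the definition of an essential family verbatim. So as a proof of the stated theorem, your proposal is correct and matches the paper.

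The speculative remainder, however, does \emph{not} reflect the paper's actual argument for \cref{thm:main-bdd-tw}, and its central inference is flawed. You posit a universal embedding property (ii): every sufficiently deep wheel contains every graph of treewidth at most $t$ as an induced subgraph, and you claim this ``would immediately force that excluding any fixed $H$ with $\tw(H)\le t$ kills all but finitely many wheels in the family, leaving a class of bounded treewidth.'' This does not follow. The class $\mathcal C_t$ is \emph{hereditary}, so the $H$-free graphs in $\mathcal C_t$ are not the wheels themselves but arbitrary $H$-free induced subgraphs of deep wheels; the fact that a deep wheel contains $H$ somewhere says nothing a priori about the treewidth of its (potentially very large) $H$-free induced subgraphs. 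The paper's mechanism is entirely different and more delicate: it proves a \emph{bounded-branch property} (\cref{lem:bounded-ray-tw}) asserting that if $G_t[X]$ is $H$-free then from every node one can find a downward branch of $T_t$ meeting $X$ in at most $|V(H)|-1$ vertices, then combines such sparse branches with the upward-restriction to produce small balanced separators (\cref{obs:separator}, \cref{lem:balanced-vertex}, \cref{lem:main}) and hence bounded treewidth via \cref{lem:tw-sn-convert}. So the deduction you were asked about is fine, but be aware your proposed route to \cref{thm:main-bdd-tw} itself has a genuine gap and would need to be replaced by something like the bounded-branch argument.
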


Our proof of~\cref{thm:main-bdd-tw} (or \cref{thm:main-bdd-tw-ess}) revisits a~variant of the layered wheels by Chudnovsky and Trotignon~\cite{chudnovsky2024treewidthmaximumcliques}, makes it more general, and abstracts out its properties. 

\subsection{Layered-wheelology}

Several constructions have been called \emph{layered wheels}: two constructions in~\cite{layered-1} and several variants in~\cite{chudnovsky2024treewidthmaximumcliques}.
In order to facilitate further research, such as refuting Meta-conjecture$(\Pi)$ and Meta-conjecture-ws$(\Pi)$ for some properties $\Pi$ that are not treewidth-hereditary, we abstract out mandatory and optional properties of the layered wheels.

A~\emph{layered wheel} is a~(countably infinite) graph $G$ on the same vertex set as a~countably infinite \emph{locally-finite} (i.e., every node has finite degree) planarly-embedded rooted tree $T$ such that the following conditions hold.
\medskip
\begin{compactenum}
\item \label{lw:1} For every natural number $n$, the set of nodes at distance $n$ from the root in $T$, $L_n$, induces in $G$ a~path\footnote{In previous layered wheels, $L_n$ sometimes induced a~cycle rather than a~path, but this came without any functional differences.} that goes ``left-to-right'' in the planar embedding of~$T$.
  Each set $L_n$ is called a \emph{layer}, the edges induced by a~layer are called \emph{layer edges}, and the set of all layer edges is denoted by $E_L$. 
\item \label{lw:2} Every edge in $E(G) \setminus E_L$ is between a~pair of ancestor--descendant of~$T$.
\item \label{lw:3} $T$ has no arbitrarily long paths of vertices of degree 2 (in $T$).
\end{compactenum}

\medskip
We may refer to~Condition~\ref{lw:1} as the \emph{layer condition} and to~Condition~\ref{lw:2} as the \emph{treedepth condition} (since this is exactly the requirement of treedepth decompositions).
Condition~\ref{lw:3} is very mild, but forces $T$ to branch.
We note that the so-called \emph{($K_4$, even hole)-free layered wheel} from~\cite{layered-1} does not satisfy the treedepth condition.
Thus we propose a~weakening of Condition~\ref{lw:2}.

For any non-negative integer $t$, a~\emph{$t$-stroll} between $u \in V(T)$ and $v \in V(T)$ is a~$u$--$v$ path~$P$ in the graph $(V(T),E(T) \cup E_L)$ such that $P$ has no $t+1$ consecutive layer edges, and the intersection of every layer with $P$ is consecutive in~$P$ (i.e., a~stroll cannot reenter a~previously visited layer).
Two nodes $u,v \in V(T)$ are in a~\emph{$t$-wide ancestor--descendant} relation if there is a~$t$-stroll between $u$ and~$v$.
Then we define:
\medskip
\begin{compactitem}
 \item[(2')] \phantomsection\label{lw:2p} There is a~finite integer~$t \geqslant 0$, such that every edge in $E(G) \setminus E_L$ is between a~pair of $t$-wide ancestor--descendant of~$T$.
\end{compactitem}
\medskip
A \emph{generalized layered wheel} is one that satisfies Conditions \ref{lw:1}, \hyperref[lw:2p]{(2')}, and \ref{lw:3}.
The ($K_4$, even hole)-free layered wheel of Sintiari and Trotignon satisfies Condition~\hyperref[lw:2p]{(2')} with $t=1$.
Note that Condition~\hyperref[lw:2p]{(2')} with $t=0$ coincides with Condition~\ref{lw:2}. 

We now define some further optional properties:
\medskip
\begin{compactenum}
\setcounter{enumi}{3}
\item \label{lw:4} For every $i \neq j \in \mathbb N$, there is at least one edge from $L_i$ to $L_j$. 
$G$ is then called \emph{proper}.
\item \label{lw:5} $T$ has no leaf. (Every maximal branch from the root is infinite.)
$G$ is then called \emph{neat}.
\item \label{lw:6} There is an~integer $d$ such that every node of~$T$ has at~most~$d$ children in~$T$.
$G$ is then called \emph{bounded}, more precisely \emph{$d$-bounded}.
\item \label{lw:7}  Every node in every~$L_n$ has at~most~$f(n)$ children in~$T$, for some particular function~$f \colon \mathbb N \to \mathbb N$.
$G$ is then called \emph{$f$-bounded}.
\item \label{lw:8} There is an~integer $t$ such that for every node $v$ of $T$, there is a~subset $X_v$ of ancestors of $v$ of size at~most~$t$ with the property that in $G-E_L$ every edge with exactly one endpoint within $v$ and its descendants has its other endpoint in~$X_v$.
$G$ is then called \emph{upward-restricted}, more precisely \emph{$t$-upward-restricted}.
\item \label{lw:9} For every node $v$ of~$T$, the ancestors of~$v$ that are adjacent to~$v$ in $G$ form a~clique in~$G$.
$G$ is then called \emph{upward-simplicial}.
\item \label{lw:10} For every triple of pairwise distinct nodes $u, v, w$, if $u$ is an ancestor of $v$, $v$ is an ancestor of~$w$, and $uw \in E(G)$, then $uv \in E(G)$. 
$G$ is then called \emph{upward-nested}.
\end{compactenum}

\medskip

Note that, as a~slight abuse of language, the above properties attribute to $G$ what is actually a~property of the pair $(G,T)$.
\Cref{fig:abstract-lw} shows the first few layers of a~layered wheel $G$ with rooted tree $T$, which can be completed to a~proper, neat, 3-bounded layered wheel, but which is not \emph{upward-simplicial} nor \emph{upward-nested}.
\begin{figure}[h!]
\centering
  \begin{tikzpicture}[
  vertex/.style={fill,circle,inner sep=0.05cm, minimum width=0.05cm},
  tree-edge/.style={dotted},
  layer-edge/.style={blue, very thick},
  edge/.style={very thick},
  tedge/.style={white, line width=0.6pt, dotted}
  ]
  \def\s{1}
  % nodes
  \foreach \l/\x/\y in {
  a1/7/5,
  b1/3/4, b2/7/4, b3/11/4,
  c1/1/3, c2/3/3, c3/4.5/3,  c4/6/3, c5/7/3, c6/8/3,  c7/10/3, c8/12/3,
  d1/0/2, d2/1/2, d3/2/2,  d4/3/2,  d5/4/2, d6/5/2,  d7/5.5/2, d8/6.5/2,  d9/7/2,  d10/7.5/2, d11/8.5/2,  d12/9.4/2, d13/10/2, d14/10.6/2,  d15/11.4/2, d16/12.6/2,
  e1/-0.5/1, e2/0/1, e3/0.5/1,  e4/1/1,  e5/1.5/1, e6/2/1, e7/2.5/1,  e8/3/1,  e9/3.6/1, e10/4.4/1,  e11/5/1,  e12/5.35/1, e13/5.75/1, e14/6.25/1, e15/6.65/1, e16/7/1,
  e17/7.35/1, e18/7.65/1,  e19/8/1, e20/8.5/1, e21/9/1,  e22/9.4/1,  e23/9.75/1, e24/10.25/1,  e25/10.6/1,  e26/11/1, e27/11.4/1, e28/11.8/1,  e29/12.2/1, e30/12.6/1, e31/13/1}
  {
   \node[vertex] (\l) at (\x * \s, \y * \s) {} ;
  }

  % tree edges
  \foreach \i/\j in {
  a1/b1, a1/b2, a1/b3,
  b1/c1, b1/c2, b1/c3,
  b2/c4, b2/c5, b2/c6,
  b3/c7, b3/c8,
  c1/d1, c1/d2, c1/d3,
  c2/d4,
  c3/d5, c3/d6,
  c4/d7, c4/d8,
  c5/d9,
  c6/d10, c6/d11,
  c7/d12, c7/d13, c7/d14,
  c8/d15, c8/d16,
  d1/e1, d1/e2, d1/e3,
  d2/e4,
  d3/e5, d3/e6, d3/e7,
  d4/e8,
  d5/e9, d5/e10,
  d6/e11,
  d7/e12, d7/e13,
  d8/e14, d8/e15,
  d9/e16,
  d10/e17, d10/e18,
  d11/e19, d11/e20, d11/e21,
  d12/e22,
  d13/e23, d13/e24,
  d14/e25,
  d15/e26, d15/e27, d15/e28,
  d16/e29, d16/e30, d16/e31}
  {
  \draw[tree-edge] (\i) -- (\j) ;
  }

  % layer edges
  \foreach \l/\i in {b/2, c/7, d/15, e/30}{
    \foreach \k [count = \kp from 2] in {1,...,\i}{
      \draw[layer-edge] (\l\k) -- (\l\kp) ;
    }        
  }
  % layers
  \foreach \i/\j in {0/6, 1/2.4, 2/0.5, 3/-0.45, 4/-0.9}{
    \node at (\j * \s, 5 * \s - \i * \s) {$L_\i$} ;
  }

  % edges on top of tree edges
  \foreach \i/\j in {a1/b1, b1/c1, b1/c2, c1/d1, c1/d3, c3/d5, c3/d6, d1/e2, d3/e5, d3/e6, d3/e7, d4/e8, d5/e10, d6/e11, 
  a1/b2, b2/c4, b2/c6, c4/d8, c5/d9, c6/d11, d7/e12, d7/e13, d8/e15, d9/e16, d10/e17, d10/e18, d11/e20, d11/e21}{
    \draw[edge] (\i) -- (\j) ;
    \draw[tedge] (\i) -- (\j) ;
  }

  % other edges
  \foreach \i/\j in {a1/c2, a1/d1, a1/d3, a1/e4, a1/e9, b1/d3, b1/e4, b1/e11, c1/e2, c1/e3, c1/e6, c3/e11, 
  b2/d7, b2/d8, b2/d11, c6/e17, c6/e18, c6/e20, b2/e13, a1/c4, a1/d10, a1/e19}{
    \draw[edge] (\i) -- (\j) ;
  }

  % ... bent
  \foreach \i/\j/\b in {a1/d9/30, c2/e8/30, a1/e21/15}{
    \draw[edge] (\i) to [bend left=\b] (\j) ;
  }
  \end{tikzpicture}
  \caption{Possible first five layers of a~proper, neat, 3-bounded layered wheel $G$ with rooted tree~$T$.
  The edges of $T$ are dotted, the layer edges of $G$ are in blue, and its other edges are in black (with white dots if they are also edges of~$T$).}
  \label{fig:abstract-lw}
\end{figure}
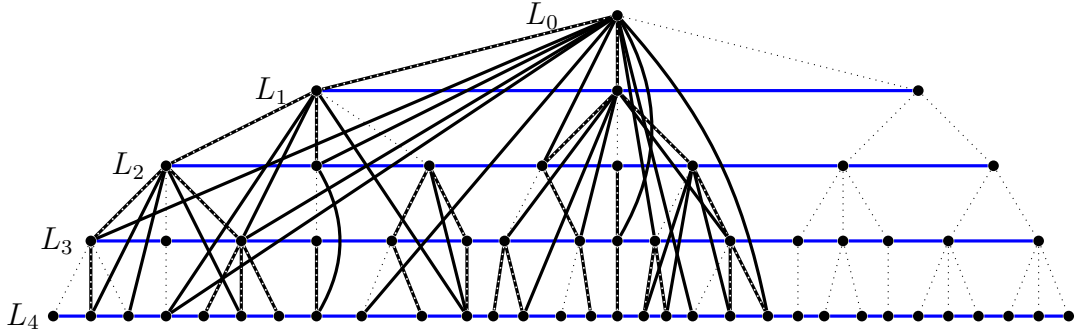
Note that the class yielded by a~proper layered wheel has unbounded treewidth as it contains every clique as a~minor (just contract each layer to a~single vertex).
The proof of~\cref{thm:main-bdd-tw} consists of building a~particular proper, neat, bounded, upward-restricted layered wheel.

All layered wheels built so far are proper and neat.
The so-called \emph{(theta, triangle)-free layered wheels} of Sintiari and Trotignon~\cite{layered-1} are further $d$-bounded (where $d \geq 17$ can be adjusted to augment the girth), upward-restricted, upward-simplicial, and upward-nested.
The so-called \emph{(even hole, $K_4$)-free layered wheels}~\cite{layered-1} are similar, except that, as already mentioned, they satisfy Condition~\hyperref[lw:2p]{(2')} instead of Condition~\ref{lw:2}.  
The layered wheels of Chudnovsky and Trotignon~\cite{chudnovsky2024treewidthmaximumcliques} are $f$-bounded for well-chosen functions~$f$, upward-simplicial, and upward-nested (but not always upward-restricted).

\medskip
A family $\mathcal F$ of induced subgraphs of~$G$ satisfies the \emph{bounded-branch} property if there is an~integer~$h$ such that for every graph $G' \in \mathcal F$ and for every $v \in V(T)$ (not necessarily in $V(G')$), there is a~\emph{downward path} in $T$ starting at~$v$ (i.e., a~maximal, possibly infinite, path starting at $v$, and iteratively visiting a~child of the current node while the current node is not a~leaf) that contains at~most~$h$ vertices of~$G'$.
Note that in a~neat layered wheel, every downward path is infinite.
Here is a~crucial feature of upward-restricted layered wheels. 

\begin{theorem}\label{thm:small-tw-bbp}
  For every neat, upward-restricted layered wheel $W$, every class of finite graphs satisfying the bounded-branch property in $W$ has bounded treewidth.
\end{theorem}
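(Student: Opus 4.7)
The plan is to show by induction on $|V(H)|$ that every finite induced subgraph $H \in \mathcal{F}$ admits a $\tfrac{2}{3}$-balanced vertex separator of size at most some $k = k(t,h)$; combined with the standard equivalence between bounded balanced separators (in every induced subgraph) and bounded treewidth, this yields $\mathrm{tw}(H) \leq O(k(t,h))$, as required. Note first that the bounded-branch property with parameter $h$ is inherited by every induced subgraph of $H$, since deleting vertices can only decrease the count along any downward path, so the same $h$ carries throughout the recursion.

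To build a balanced separator in $H$, write $N = |V(H)|$ and $w(v) = |V(H) \cap T_v|$, where $T_v$ denotes the subtree of $T$ rooted at $v$. Descending from the root of $T$ through any child of weight $>2N/3$ as long as such a child exists, we reach a tree-centroid $c$ with $w(c) > 2N/3$ and $w(c') \leq 2N/3$ for every child $c'$ of $c$. Apply bounded-branch at $c$ to obtain a downward path $P_c$ with $|P_c \cap V(H)| \leq h$, and let $X_c$ be the at-most-$t$ upward-restricted ancestor set of $c$. Set $S_0 := (P_c \cap V(H)) \cup (X_c \cap V(H))$, of size at most $h+t$. The vertices of $V(H) \setminus S_0$ then split into $A := V(H) \setminus (T_c \cup X_c)$ (of size $\leq N/3$ since $w(c) > 2N/3$), and for each hanging subtree $T_u$ rooted at an off-spine child of some vertex of $P_c$, the block $B_u := V(H) \cap T_u$ (of size $\leq 2N/3$ by the centroid property applied along $P_c$). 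The upward-restricted property immediately handles the non-layer edges that could merge these blocks: every non-layer edge from any $B_u$ to $A$ passes through $X_c \subseteq S_0$, and no non-layer edge connects two distinct $B_u, B_{u'}$ since their hanging subtrees are pairwise incomparable in the tree order.

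The main, and anticipated hardest, obstacle is the layer edges. In each layer $L_n$, the intersection $L_n \cap T_c$ is a contiguous subpath of the layer-path (by the planar embedding of $T$), split at $p_n := P_c \cap L_n$ into intervals belonging to various $B_u$'s; two adjacent intervals on the same side of $p_n$ are joined by a layer edge, and the two extreme vertices of $L_n \cap T_c$ are joined by layer edges to vertices of $A$. A naive inclusion of all layer boundaries would scale the separator size with the number of active layers, breaking the bound. The plan is instead to augment $S_0$ with a bounded collection of \emph{pivot} vertices issued from iterated applications of bounded-branch -- for example, by adding downward paths given by BBP at a constant number of carefully chosen off-spine children of vertices of $P_c$, so that the per-layer intersections of these added paths together with $p_n$ refine the partition enough that every component of $H - S$ is either contained in $A$ or lies inside a single $B_u$, of size $\leq 2N/3$. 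Each added path contributes $\leq h$ path-vertices and $\leq t$ further upward-restricted ancestors to $S$, so the overall bound $|S| \leq k(t, h)$ can be maintained; the crux is arguing that a constant (in $t, h$) number of iterations suffices, which is precisely where the interplay between the planar layer-path structure of $W$ and the $t$-upward-restricted property must be fully exploited.
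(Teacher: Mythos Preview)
Your overall strategy (bounded balanced separators in every induced subgraph $\Rightarrow$ bounded treewidth) matches the paper, and you have correctly located the real difficulty: the layer edges. But the specific separator you build does not close, and the fix you sketch in the last paragraph is not a fix.

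The gap is this. After removing your spine $P_c$ and $X_c$, the off-spine blocks $B_u$ are \emph{not} pairwise separated: in each layer $L_n$ the intervals $L_n\cap B_u$ lying on the same side of $p_n$ form a contiguous chain joined by layer edges, so all the $B_u$'s to the left of the spine merge into a single ``left blob'' (and similarly on the right). There is no reason either blob has at most $2N/3$ vertices; the centroid condition bounds each individual $B_u$, not their union. Your stated goal---refine $S$ so that every component lies inside a \emph{single} $B_u$---would require one cut per adjacency between consecutive $B_u$'s, and since $c$ may have unboundedly many children (the layered wheel is not assumed bounded), no constant number of additional downward paths can achieve this. The ``constant number of iterations'' claim is exactly the step that fails.

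The paper avoids the problem by changing the goal. It never tries to separate the $B_u$'s from one another; instead it uses exactly \emph{two} downward paths as left and right ``walls'' to carve out a contiguous left-to-right slab of the tree. Because each layer is a left-to-right path and each rooted subtree meets each layer in an interval, two walls suffice to isolate such a slab from the rest of $W$. Balance is obtained by a preliminary lemma (their Lemma~5.1) that locates the walls: either some node $u$ has between $n/16$ and $n/4$ descendants in $V(G)$ (and one puts the walls just outside the leftmost and rightmost children of $u$), or every child of the current node has fewer than $n/16$ descendants in $V(G)$, in which case one sweeps the children left-to-right and stops once the accumulated count lies in $[n/16,n/8]$, placing the two walls at the leftmost child and at the stopping child. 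In both cases the separator is $N^\uparrow[u]\cup N^\uparrow[u^-]\cup N^\uparrow[u^+]\cup P_{u^-}\cup P_{u^+}$, of size at most $3(t{+}1)+2h$, and is $\tfrac{15}{16}$-balanced. So the missing idea in your attempt is: drop the single-spine-plus-refinement picture, and instead place two bounded-branch paths side by side to cut out a slab whose size is controlled by a sweeping argument over the (possibly unboundedly many) children.
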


\Cref{thm:small-tw-bbp} is essentially proved in~\cite{chudnovsky2024treewidthmaximumcliques}, albeit not under this formalism.
We shall then simply see how to engineer a~neat, proper, upward-restricted layered wheel $W_t$ such that the absence of any (a~priori unknown) graph of treewidth at~most~$t$ in finite induced subgraphs of~$W_t$ ensures the bounded-branch property.

\Cref{thm:small-tw-bbp} further leads to the following theorem.

 \begin{theorem}\label{thm:lw-log-tw-ub}
   The $n$-vertex induced subgraphs of any neat, upward-restricted layered wheel have treewidth $O(\log n)$.
 \end{theorem}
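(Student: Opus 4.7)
The plan is to combine (i) a~quantitative bounded-branch estimate for the single graph~$G'$ and (ii) the quantitative content of~\cref{thm:small-tw-bbp}, whose proof in~\cite{chudnovsky2024treewidthmaximumcliques} should yield a~tree decomposition of width linear in the bounded-branch parameter~$h$. Let $W$ be a~neat, $t$-upward-restricted layered wheel with rooted tree~$T$, and let $d$ be a~finite upper bound, provided by Condition~\ref{lw:3}, on the length of any path of degree-$2$ vertices in~$T$. Fix an~$n$-vertex induced subgraph~$G'$ of~$W$; our goal is to show $\tw(G') = O(\log n)$.

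The key step is showing that the singleton family $\{G'\}$ satisfies the bounded-branch property in~$W$ with parameter $h = O(\log n)$. Given $v \in V(T)$, set $f(u) := |V(G') \cap V(T_u)|$ for each node $u$ in the subtree $T_v$ rooted at~$v$. Construct a~downward path~$P$ starting at~$v$ greedily: at the currently visited node~$u$, if some child~$c$ of~$u$ has $f(c) = 0$, descend into~$c$ and then extend indefinitely downwards (possible by neatness, and, by construction, without encountering any further vertex of~$V(G')$); otherwise, descend into a~child~$c^\star$ minimizing $f(c^\star)$, which then satisfies $f(c^\star) \geq 1$. By Condition~\ref{lw:3}, each maximal run of degree-$2$ nodes on~$P$ has length at most~$d$, contributing at most~$d$ vertices to~$V(G') \cap V(P)$. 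At any node~$u$ on~$P$ with at least two children in~$T$ all having $f \geq 1$, the greedy choice gives $f(c^\star) \leq f(u)/2$, so $f$ at least halves; since $f(v) \leq n$, fewer than $\lceil \log_2(n+1) \rceil + 1$ such halvings occur before the descent dives into an $f = 0$ child and stops accumulating $V(G')$-vertices. Altogether, $|V(G') \cap V(P)| \leq (d+1)(\lceil \log_2(n+1) \rceil + 1) = O(\log n)$.

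Combining the bound $h = O(\log n)$ with the linear-in-$h$ form of~\cref{thm:small-tw-bbp} yields $\tw(G') = O(\log n)$, as desired. The main technical point to verify is that the tree decomposition built in the proof of~\cref{thm:small-tw-bbp} indeed has width linear in~$h$ (and not, say, merely polynomial in~$h$), so that plugging in $h = O(\log n)$ cleanly produces the advertised logarithmic treewidth; a~polynomial dependence on~$h$ would still give only a~polylogarithmic bound.
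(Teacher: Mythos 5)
Your proposal is correct and takes essentially the same approach as the paper: establish that the single graph $G'$ satisfies the bounded-branch property with parameter $h = O(\log n)$ via a greedy halving argument (descending into the child with fewest $V(G')$-descendants, using Condition~\ref{lw:3} to bound runs of degree-2 nodes), and then plug into the explicit bound $15 \cdot \frac{\log(2/3)}{\log(15/16)} \cdot (3(t+1)+2h)$ from the proof of \cref{thm:small-tw-bbp}, which is indeed linear in $h$. The paper phrases this as re-running the proof of \cref{thm:small-tw-bbp} with a greedy choice of downward paths rather than invoking it as a black box, but the content and the quantitative bound are the same, so the ``main technical point'' you flag is already taken care of.
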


\Cref{thm:lw-log-tw-ub} unifies and extends what was individually observed on the finite induced subgraphs of the (theta, triangle)-free layered wheel~\cite{layered-1}, and on the $K_t$-free finite induced subgraphs of the layered wheels from~\cite{chudnovsky2024treewidthmaximumcliques}.
Indeed, one can observe that the $K_t$-free finite induced subgraphs of an upward-simplicial, upward-nested layered wheel have the property of upward-restriction. 
On the other hand, it is not difficult to see that any proper, bounded layered wheel admits arbitrarily large $n$-vertex induced subgraphs of treewidth $\Omega(\log n)$.
Thus a~source of classes with logarithmic treewidth is given by considering the finite induced subgraphs of any proper, neat, bounded, upward-restricted layered wheel.

\medskip

Someone interested in refuting~\cref{conj:tww-tw} by developing further layered wheels should be aware of the following obstacle. 
\begin{theorem}\label{thm:tww-of-ur-lw}
The class of finite induced subgraphs of any neat, upward-restricted layered wheel has bounded twin-width.
\end{theorem}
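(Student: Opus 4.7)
The plan is to build, for any finite induced subgraph $G'$ of a neat, upward-restricted layered wheel $G$ with tree $T$ and upward-restriction parameter $t$, a contraction sequence of $G'$ whose maximum red degree is bounded by some constant $c(t)$. The sequence proceeds bottom-up along $T$ and leverages two ingredients: the upward-restriction property of $G$, which controls the non-layer edges, and the planar structure of the layers, which controls layer-edge boundaries between sibling super-vertices.

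The first observation is that upward-restriction controls the non-layer part of the red degree cleanly. If $\bar v$ is a super-vertex obtained by contracting the vertices of $V(G') \cap T_v$, then all non-layer edges from $\bar v$ to the exterior end in $X_v$, of size at most~$t$. Moreover, for any child $c$ of $v$ one can check that $X_c \subseteq \{v\} \cup X_v$: indeed any non-layer edge from $T_c$ leaving $T_v$ must end in $X_v$ by the upward-restriction of~$v$, so the only ancestor of $c$ that can be in $X_c$ but outside $X_v$ is $v$ itself. Consequently, any intermediate super-vertex obtained while gradually merging $v$'s children's super-vertices together with $v$ also has non-layer exterior neighborhood of size at most~$t$.

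The principal challenge is the layer edges: $V(G') \cap T_v$ may touch many layers, and by the planar embedding each touched layer contributes up to two crossing layer edges to the exterior. A naive bottom-up contraction of whole subtrees would therefore accumulate unbounded red layer-degree in the super-vertex for $T_v$. To circumvent this, the contraction is run in a layer-aware order: we maintain at each time a partition of the un-contracted vertices into ``blobs'' that are intervals in the planar DFS order of $T$, and we merge planar-consecutive blobs in a way that absorbs their shared layer-edge bridges (one per commonly touched layer) as internal rather than red edges.

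The main obstacle is proving that, even across many layers, each active super-vertex maintains at most a constant number of red layer-edges to other active super-vertices. This requires a careful amortized bookkeeping that schedules sibling merges so that each merge absorbs layer boundaries on all commonly touched layers at once; Condition~\ref{lw:3} (no arbitrarily long paths of degree-2 vertices in $T$) ensures branching is regular enough that sibling merges happen frequently, preventing layer boundaries from accumulating. Once this bookkeeping yields a uniform bound $c(t)$ on red degree along the contraction sequence, one concludes $\tww(G') \leq c(t)$ for every finite induced subgraph $G'$, as desired.
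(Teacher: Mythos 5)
Your sketch misses the central mechanism of the actual proof and, as a result, the genuinely hard part is left unaddressed. You correctly observe that upward-restriction bounds the non-layer edges leaving a subtree blob $\bar v$ (they end in $X_v$, $|X_v|\leq t$), but this only controls the red degree \emph{of $\bar v$}. The real obstacle is the red degree of \emph{singleton ancestors}: a vertex $w$ in a shallow layer, while still its own part, may have non-layer edges into arbitrarily many distinct deeper subtree blobs, each of which typically contains both a neighbour and a non-neighbour of $w$ and hence is red-adjacent to $\{w\}$. Upward-restriction gives $|X_q|\leq t$ for each subtree root $q$, but nothing prevents $w$ from belonging to $X_q$ for arbitrarily many $q$, so the red degree of $\{w\}$ in your scheme is not bounded by any function of $t$. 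The paper sidesteps this precisely by \emph{not} bounding red degree directly: it bounds \emph{oriented} twin-width, i.e.\ the out-degree in $\overrightarrow{\mathcal R}(\mathcal P)$ (which never charges the large in-degree to $\{w\}$), shows it is at most $t+3$ with a synchronous layer-by-layer bottom-up merge, and then invokes \cref{thm:otww-tww} via $\tww\leq\otww$. Your proposal never mentions oriented twin-width, and without it the argument has a genuine gap.

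The part you call the ``principal challenge'' --- the layer edges --- is in fact the easy part and requires no amortization. Under the synchronous layer-by-layer merge, a subtree blob rooted at $v$ has all of its left-side layer neighbours (across every layer below $v$) inside the single contiguous blob rooted at $v^-$, and symmetrically on the right (at worst split into two by the left-to-right merge order), so the layer edges contribute only $O(1)$ out-neighbours. The relevant hypothesis here is \emph{neatness} (the side subtrees extend to every layer below $v$, so those boundary vertices all fall in one blob), not Condition~\ref{lw:3}; the latter is not used in the paper's argument. Your ``amortized bookkeeping'' and ``blobs that are intervals in planar DFS order'' are left at the level of intent and do not constitute a proof even of the layer-edge bound, and they do nothing for the ancestor-singleton red degree described above.
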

The \emph{neat} condition mainly simplifies the proof of~\cref{thm:tww-of-ur-lw}, and should not be needed.
Thus, one would have to drop the upward-restricted condition, and rely on other specificities of one's layered wheel to bound the treewidth of subclasses $\mathcal C'$ of bounded twin-width.

\subsection{Related work and further directions}

We identify a~few directions related to the current work.

\medskip
\textbf{Essentiality for other graph parameters.}
A~reader familiar with the paper ``\emph{Induced subgraphs of induced subgraphs of large chromatic number}'' of Gir\~ao et al.~\cite{giraoetal} likely noticed a~parallel with~\cref{thm:main-one-graph}.
The authors show that for every graph $H$, there is a~class $\mathcal C$ of unbounded chromatic number such that the \mbox{$H$-free} graphs of $\mathcal C$ have bounded chromatic number. 
(Furthermore, if $H$ has at least one edge, the class $\mathcal C$ can be picked to have the same clique number as~$H$.)
In this sense, the current paper can be thought of as ``\emph{Induced subgraphs of induced subgraphs of large treewidth}.''
We note however that the construction in~\cite{giraoetal} contains, by design, arbitrarily large induced bicliques, so could not guide us in achieving~\cref{thm:main-one-graph}.

We now know that no fixed induced subgraph can be removed from every class $\mathcal C$ of unbounded $p$ while preserving that $p$ is unbounded, for parameter $p$ equal to chromatic number or treewidth.  
Other graph parameters can be considered such as clique-width, twin-width, etc.
As our construction leads to a~weakly sparse class, within which treewidth and clique-width are known to be functionally equivalent~\cite{Gurski00}, our paper also offers a~complete answer for clique-width.
The case of twin-width remains interesting.
It is known that the class of permutation graphs is a~minimal hereditary class of unbounded twin-width~\cite{twin-width1}.
This translates into the essentiality (for twin-width) of every permutation graph.
To our knowledge, the essentiality (for twin-width) of any other graph is open.  
In general, we propose the following questions.

\begin{meta-problem}[Characterize $p$-essential graphs]
  For a~parameter of choice~$p$, which graphs $H$ are such that there is a~hereditary class of unbounded $p$ whose $H$-free graphs have bounded~$p$?
\end{meta-problem}

Observe that Ramsey's theorem can be rephrased as the fact that complete or edgeless graphs are the only $p$-essential graphs when $p$ is the number of vertices.

\medskip

\textbf{Characterizing the essential families.}
For families rather than single graphs, our understanding of essentiality (for treewidth) is not quite complete.
What about families $\mathcal H$ of unbounded treewidth?
Such a~family $\mathcal H$ is essential if and only if its hereditary closure is a~minimal hereditary class of unbounded treewidth.
Thus, any family $\mathcal H$ consisting of infinitely many complete graphs or of infinitely many complete bipartite graphs is essential.
On the other hand, a~family $\mathcal H$ consisting of infinitely many complete graphs (resp.~complete bipartite graphs) plus at~least one graph that is not a~clique (resp.~not a~biclique) is \emph{not} essential.
Hence, we narrowed down the open cases to weakly sparse families.
Is there a~weakly sparse family whose hereditary closure is a~minimal hereditary class of unbounded treewidth?
This is precisely a~question of~Cocks~\cite{COCKS2024104005}, which we reformulate here.

\begin{conjecture}[Cocks's Conjecture 1.5 in~\cite{COCKS2024104005}]
    A family of unbounded treewidth is essential if and only if it contains only complete graphs, or only complete bipartite graphs. 
\end{conjecture}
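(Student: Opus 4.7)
The plan is to split the statement into its two directions. As noted in the excerpt, a family $\mathcal H$ is essential exactly when its hereditary closure $\bar{\mathcal H}$ is a minimal hereditary class of unbounded treewidth, so the conjecture reduces to classifying all such minimal classes.

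For the ``if'' direction, it suffices to show that the class $\mathcal K := \{K_n : n \in \mathbb N\}$ of all complete graphs and the class $\mathcal B := \{K_{a,b} : a, b \in \mathbb N\}$ (the hereditary closure of all complete bipartite graphs) are each minimal hereditary classes of unbounded treewidth. This is immediate: any proper hereditary subclass of $\mathcal K$ (resp.\ of $\mathcal B$) omits some $K_n$ (resp.\ some $K_{a,b}$), which alone forces treewidth at most $n-1$ (resp.\ at most $\min(a,b)-1$). Hence every subfamily of $\mathcal K$ or of $\mathcal B$ of unbounded treewidth is essential.

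For the converse, let $\mathcal C$ be a minimal hereditary class of unbounded treewidth. A two-step Ramsey dichotomy narrows $\mathcal C$ down. First, if $\mathcal C$ contains $K_n$ as an induced subgraph for arbitrarily large $n$, then $\mathcal K \subseteq \mathcal C$ is a hereditary subclass of unbounded treewidth, and minimality gives $\mathcal C = \mathcal K$. Otherwise $\mathcal C$ is $K_r$-free for some $r$. Second, if $\mathcal C$ is not weakly sparse, it contains $K_{t,t}$ as a subgraph for arbitrarily large $t$; applying Ramsey inside each side of such a $K_{t,t}$ and discarding the clique outcome using $K_r$-freeness, one finds induced $K_{s,s} \in \mathcal C$ for arbitrarily large $s$, so $\mathcal B \subseteq \mathcal C$, and minimality again yields $\mathcal C = \mathcal B$. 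The remaining case is that $\mathcal C$ is simultaneously $K_r$-free and weakly sparse, and in that case one must derive a contradiction by producing a proper hereditary subclass of $\mathcal C$ that retains unbounded treewidth.

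This last step is the crux and the main obstacle: it amounts to showing that \emph{no} weakly sparse hereditary class of unbounded treewidth is minimal. The present paper actually makes this harder, since for every $t$ it produces a weakly sparse class $\mathcal C_t$ of unbounded treewidth in which forbidding any induced subgraph of treewidth at most $t$ already collapses treewidth --- so no bounded-treewidth induced subgraph can certify non-minimality from the outside. A plausible attack is to attach to each $G \in \mathcal C$ a canonical ``shape'' derived from a tree-like decomposition witnessing its treewidth-richness, in the spirit of~\cref{thm:small-tw-bbp} and our abstract layered wheel framework, and to partition $\mathcal C$ into hereditary subclasses indexed by such shapes, hoping that at least one subclass retains unbounded treewidth while omitting graphs of other shapes. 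The difficulty is to isolate a shape invariant that is both preserved under induced subgraphs and genuinely nontrivial; no such candidate is known, and we expect any full resolution of the conjecture to require a new structural insight rather than a direct application of the techniques developed here.
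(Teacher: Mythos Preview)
The statement you are attempting to prove is an \emph{open conjecture}; the paper does not prove it, but states it (after Cocks) precisely because the authors' own reduction narrows the question down to this residual case. So there is no ``paper's own proof'' to compare against.

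Your proposal correctly reproduces that reduction. The ``if'' direction and the Ramsey argument dispatching families whose hereditary closure is not weakly sparse are exactly the easy observations the paper makes in the paragraph preceding the conjecture. What remains --- showing that no weakly sparse hereditary class of unbounded treewidth is minimal --- is the actual content of the conjecture, and you explicitly acknowledge you have no argument for it. Hence your proposal is not a proof but an accurate summary of the known reductions together with an honest admission that the key step is missing. Since the paper itself leaves this open, there is nothing further to compare.

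One minor correction in your ``if'' direction: omitting $K_{a,b}$ from the class $\mathcal B$ of complete bipartite graphs bounds the treewidth by $\max(a,b)-1$, not $\min(a,b)-1$. For instance, $K_{50,50}$ is $K_{2,100}$-free but has treewidth $50$. This does not affect the conclusion that the treewidth is bounded.
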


\medskip

\textbf{Treewidth logarithmically bounded in the number of vertices.}
Sintiari and Trotignon~\cite{layered-1} remarked that their layered wheel constructions have treewidth logarithmic in their number of vertices, and suggested relaxing the bounded treewidth condition and investigating logarithmic treewidth instead. 
Many (NP-hard) problems can be solved in polynomial time in $n$-vertex graphs of treewidth $O(\log n)$, such as every problem expressible in the so-called \emph{Existential Counting Modal Logic} of Pilipczuk, a~large fragment of Monadic Second-Order logic~\cite{Pilipczuk11}.
In the past five years, several classes have been shown to have logarithmic treewidth~\cite{Chudnovsky_2022, Bonamy24, BonnetD23, chudnovsky2024inducedsubgraphstreedecompositions, sparseOuterString}.
Our result shows that any graph is responsible for the transition from unbounded to bounded treewidth in some class.
Is this true for the transition between superlogarithmic and logarithmic treewidth? 

\begin{problem}\label{p:logarithmic-tw}
    For which families $\mathcal H$ is there a hereditary class $\mathcal C$ of superlogarithmic treewidth such that, for every $H \in \mathcal H$, the $H$-free graphs of $\mathcal C$ have at most logarithmic treewidth?
\end{problem}

\Cref{p:logarithmic-tw} is already open for singleton families $\mathcal H = \{H\}$.
It should be noted that the Pohoata--Davies grid~\cite{Pohoata14,Davies22} has no large clique, biclique, subdivided wall, or its line graph as an induced subgraph and has superlogarithmic treewidth: the $n \times n$ Pohoata--Davies grid has treewidth $\Theta(n)$.  

\medskip

% \textbf{Bounded twin-width witnesses of large treewidth}

% \cref{thm:main-bdd-tw} says that, if we want to find a property $\Pi$ such that all graphs of sufficiently large treewidth have an induced subgraph with large treewidth and property $\Pi$, then $\Pi$ had better be non-treewidth-hereditary. One such property is that of having bounded twin-width.\footnote{Actually \emph{bounded treewidth} is also not treewidth-hereditary, but the associated meta-conjectures are trivially false.}
% We thus give the following special case of~\cref{meta-conj:plain}, not refuted by our present work, and motivated by the fact that the weakly sparse layered wheels, the Pohoata--Davies grids (and their extensions), and the death star all have bounded twin-width.
% \begin{conjecture}\label{conj:tww-tw}
%   Every hereditary class of unbounded treewidth admits a~subclass of unbounded treewidth and bounded twin-width.
% \end{conjecture}
% We will come back to~\cref{conj:tww-tw}.

% \medskip

\textbf{Essentiality in the high-girth setting.} Given the strong constraints imposed by \cref{thm:main-bdd-tw} on properties forced by large treewidth, another line of investigation is to ask whether the situation is any different in more restricted settings. A natural question in this direction is whether \cref{thm:main-triangle-free} can be generalized to high girth. In \cref{lem:no-high-girth-wheels}, we show that if such a generalization is possible, it cannot use a layered wheel construction, and ask whether the result does in fact generalize (and in particular, whether large treewidth does force certain finitely-hereditary properties in the high-girth setting):

\begin{problem} \label{conj:no-high-girth}
    Does there exist a graph $H$ such that for every $(C_3, C_4)$-free family $\mathcal C$ of unbounded treewidth, the subclass of $H$-free graphs of $\mathcal C$ has unbounded treewidth? 
\end{problem}

\section{Preliminaries}\label{sec:prelim}

For any positive integer $i$, we denote by $[i]$ the set of integers $\{1,2,\ldots,i\}$.

\subsection{Induced subgraphs, neighborhoods, and some special graphs}\label{sec:graph-def}

We denote by $V(G)$ and $E(G)$ the set of vertices and edges of a graph $G$, respectively.
A~graph $H$ is a~\emph{subgraph} of a~graph $G$ if $H$ can be obtained from $G$ by vertex and edge deletions.
Graph~$H$ is an~\emph{induced subgraph} of $G$ if $H$ can be obtained from $G$ by vertex deletions only.
A~graph $G$ is \emph{$H$-free} if $G$ does not contain $H$ as an induced subgraph.
For $S \subseteq V(G)$, the \emph{subgraph of $G$ induced by $S$}, denoted $G[S]$, is obtained by removing from $G$ all the vertices that are not in $S$ (together with their incident edges).
Then $G-S$ is a short-hand for $G[V(G)\setminus S]$.
If $F \subseteq E(G)$, then $G-F$ is the graph $(V(G), E(G) \setminus F)$.
We denote by $N_G(v)$ and $N_G[v]$, the open, respectively closed, neighborhood of $v$ in $G$.
For $S \subseteq V(G)$, we set $N_G(S) := (\bigcup_{v \in S}N_G(v)) \setminus S$ and $N_G[S] := N_G(S) \cup S$.
A~graph $H$ is a~\emph{minor} (resp.~\emph{induced minor}) of a~graph~$G$ if $H$ can be obtained from a~subgraph (resp.~\emph{induced subgraph}) of~$G$ by performing edge contractions.

A~\emph{subdivision} of a~graph $G$ is any graph obtained from $G$ by replacing each edge of~$G$ by a~path of at~least~one edge.
The \emph{$s$-subdivision} of $G$ is the graph obtained from $G$ by replacing each edge of~$G$ by a~path of $s+1$ edges.

The $t$-vertex \emph{complete graph}, denoted by \emph{$K_t$}, is obtained by making adjacent every pair of two distinct vertices among $t$ vertices, and the \emph{complete bipartite graph $K_{t,t}$} with bipartition $(A,B)$ such that $|A|=|B|=t$ is obtained by making every vertex of~$A$ adjacent to every vertex of~$B$. A \emph{clique} of a graph $G$ is a set of vertices inducing a complete graph.
The \emph{clique number} of a~graph $G$ is the largest $t$ such that $K_t$ is an (induced) subgraph of~$G$.
A~graph class $\mathcal C$ is said \emph{weakly sparse} if there is a~finite integer $t$ such that no graph of~$\mathcal C$ contains $K_{t,t}$ as a~subgraph.
A~class~$\mathcal C$ is \emph{hereditary} if for every $G \in \mathcal C$ and every induced subgraph $H$ of $G$, $H \in \mathcal C$.

A~\emph{string graph} is the intersection graph of some collection of (non-self-intersecting) curves in the plane (usually called strings), or equivalently the intersection graph of a~collection of connected sets of some planar graph.
It is known that the 1-subdivision of any non-planar graph is not a~string graph~\cite{Sinden66}.

A~(possibly infinite) graph is \emph{chordal} if it does contain any cycle of length at~least~4 as an induced subgraph.
A~vertex $v$ in a~graph $G$ is \emph{simplicial} if $N_G(v)$ (or $N_G[v]$) is a~clique of~$G$.
The following is a classical fact.
\begin{fact}\label{fact:simpG}
  Every finite chordal graph admits a simplicial vertex. 
\end{fact}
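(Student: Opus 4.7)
The plan is to establish Dirac's stronger statement: \emph{every finite chordal graph is either complete or contains two non-adjacent simplicial vertices}. The Fact then follows at once. I would proceed by induction on $|V(G)|$, with the case $|V(G)| \leq 1$ trivial. For the inductive step, assume $G$ is chordal and not complete, and fix two non-adjacent vertices $a, b$ together with a minimal $a$--$b$ separator $S \subseteq V(G)\setminus\{a,b\}$. Let $A$ and $B$ denote the vertex sets of the connected components of $G - S$ containing $a$ and $b$, respectively.

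The first key step is to prove that $S$ induces a clique in $G$. Suppose for contradiction that $u, v \in S$ are non-adjacent. By the minimality of $S$, each of $u$ and $v$ has at least one neighbor in $A$ and at least one neighbor in $B$ (otherwise $S \setminus \{u\}$ or $S \setminus \{v\}$ would already separate $a$ from $b$). Since $G[A]$ is connected, there is a $u$--$v$ path whose internal vertices all lie in $A$, and the shortest such path $P_A$ is induced in $G$. Symmetrically, we obtain an induced $u$--$v$ path $P_B$ through $B$. Concatenating $P_A$ and $P_B$ yields an induced cycle of length at least $4$, contradicting the chordality of $G$.

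The second key step is to apply the induction hypothesis to the chordal graph $G_A := G[A \cup S]$, which has strictly fewer vertices than $G$ since $B \neq \emptyset$. Either $G_A$ is complete, in which case $a$ is simplicial in $G_A$; or $G_A$ admits two non-adjacent simplicial vertices, at least one of which must lie in $A$ since $S$ is a clique. In both cases we obtain a vertex $x \in A$ simplicial in $G_A$. Because $S$ separates $A$ from $V(G) \setminus (A \cup S)$, we have $N_G(x) = N_{G_A}(x)$, so $x$ is simplicial in $G$ as well. The symmetric argument applied to $G[B \cup S]$ produces a simplicial vertex $y \in B$ of $G$, and since $x \in A$, $y \in B$ lie in different components of $G - S$, they are non-adjacent.

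The only non-routine ingredient is the first step, namely that a minimal separator in a chordal graph is a clique; once that is in place, the two applications of the induction hypothesis and the observation that the neighborhood of a vertex inside a component is unaffected by the outside of the separator finish the argument cleanly.
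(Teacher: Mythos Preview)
Your argument is correct: this is precisely Dirac's classical proof, and all the steps go through as you describe. The one place worth a word of extra care is the claim that the concatenation of $P_A$ and $P_B$ is an induced cycle; this holds because the internal vertices of $P_A$ lie in $A$, those of $P_B$ lie in $B$, and $S$ separates $A$ from $B$, so no cross-edges can occur---you have this implicitly, but it is the heart of the contradiction.

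As for comparison with the paper: the paper does not prove this fact at all. It is stated as a classical result and used as a black box in the proof of Lemma~\ref{lem:tr}. So your write-up supplies strictly more than the paper does here, and the route you take (minimal separators are cliques, then induction on the two sides) is the standard textbook proof attributed to Dirac.
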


For two positive integers $k, \ell$, the \emph{$k \times \ell$ grid} is the graph on $k\ell$ vertices, say, $v_{i,j}$ with $i \in [k], j \in [\ell]$, such that $v_{i,j}$ and $v_{i',j'}$ are adjacent whenever either $i=i'$ and $|j-j'|=1$ or $j=j'$ and $|i-i'|=1$.
For what comes next, it is helpful to identify vertex $v_{i,j}$ with the point $(i,j)$ of $\mathbb N^2$.
For $k \geqslant 2$ the \emph{$k \times k$ wall} is the subgraph of the $2k \times k$ grid obtained by removing every ``vertical edge'' on an ``even column'' when the edge bottom endpoint is on an ``odd row'', and every ``vertical edge'' on an ``odd column'' when the edge bottom endpoint is on an ``even row,'' and finally by deleting the two vertices of~degree~1 that this process creates.   
See~\cref{fig:grids-walls} for an illustration of the $5 \times 5$ grid and $5 \times 5$ wall.
\begin{figure}[h!]
  \centering
  \begin{tikzpicture}[vertex/.style={draw,circle,inner sep=0.035cm}]
    \def\t{5}
    \pgfmathtruncatemacro\tm{\t-1}
    \pgfmathtruncatemacro\tt{2 * \t}
    \pgfmathtruncatemacro\ttm{\tt - 1}
    \pgfmathtruncatemacro\ht{\t / 2}
    \def\s{0.6}

    %grid
    \foreach \i in {1,...,\t}{
      \foreach \j in {1,...,\t}{
        \node[vertex] (v\i\j) at (\i * \s, \j * \s) {} ;
      }
    }
    \foreach \i in {1,...,\t}{
      \foreach \j [count = \jm from 1] in {2,...,\t}{
        \draw (v\i\j) -- (v\i\jm) ;
        \draw (v\j\i) -- (v\jm\i) ;
      }
    }

    \begin{scope}[xshift=1.4 * \t * \s cm]
    %wall
    \foreach \i in {1,...,\tt}{
      \foreach \j in {2,...,\tm}{
        \node[vertex] (w\i-\j) at (\i * \s, \j * \s) {} ;
      }
    }
    \foreach \i in {1,...,\ttm}{
      \node[vertex] (w\i-1) at (\i * \s, \s) {} ;
    }
    \foreach \i in {2,...,\tt}{
      \node[vertex] (w\i-\t) at (\i * \s, \t * \s) {} ;
    }

    \foreach \i in {2,...,\tm}{
      \foreach \j [count = \jm from 1] in {2,...,\tt}{
        \draw (w\j-\i) -- (w\jm-\i) ;
      }
    }
    \foreach \j [count = \jm from 1] in {2,...,\ttm}{
        \draw (w\j-1) -- (w\jm-1) ;
    }
    \foreach \j [count = \jm from 2] in {3,...,\tt}{
        \draw (w\j-\t) -- (w\jm-\t) ;
    }

    \foreach \i in {1,...,\t}{
      \pgfmathtruncatemacro\ii{2 * \i}
      \foreach \j in {1,...,\ht}{
        \pgfmathtruncatemacro\jj{2 * \j}
        \pgfmathtruncatemacro\jjp{\jj + 1}
        \draw (w\ii-\jj) -- (w\ii-\jjp) ;
      }
    }
     \foreach \i in {1,...,\t}{
      \pgfmathtruncatemacro\ii{2 * \i - 1}
      \foreach \j in {1,...,\ht}{
        \pgfmathtruncatemacro\jj{2 * \j - 1}
        \pgfmathtruncatemacro\jjp{\jj + 1}
        \draw (w\ii-\jj) -- (w\ii-\jjp) ;
      }
    }
    \end{scope}
    
  \end{tikzpicture}
  \caption{The $5 \times 5$ grid (left) and the $5 \times 5$ wall (right).}
  \label{fig:grids-walls}
\end{figure}
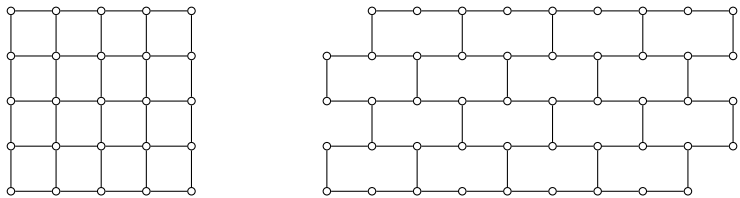

\subsection{Treewidth, brambles, and separation number}\label{sec:tree-dec-bramble}

A~\emph{tree-decomposition} of a~graph $G$ is a~pair $(T,\beta)$ where $T$ is a~tree and $\beta$ is a~map from $V(T)$ to $2^{V(G)}$ satisfying the following conditions:
\begin{compactitem}
\item for every $uv \in E(G)$, there is an~$x \in V(T)$ such that $\{u,v\} \subseteq \beta(x)$, and
\item for every $v \in V(G)$, the set of nodes $x \in V(T)$ such that $v \in \beta(x)$ induces a~non-empty subtree of $T$.
\end{compactitem}
The \emph{width} of $(T,\beta)$ is defined as $\max_{x \in V(T)} |\beta(x)| - 1$, and the \emph{treewidth} of $G$, denoted by $\tw(G)$, is the minimum width of $(T,\beta)$ taken among every tree-decomposition $(T,\beta)$ of~$G$.

The notion of~\emph{bramble} was introduced by Seymour and Thomas~\cite{SeymourT93} as a~min-max dual to treewidth.
A~\emph{bramble} of a graph~$G$ is a~set $\mathcal B := \{B_1, \ldots, B_q\}$ of connected subsets of $V(G)$ such that for every $i, j \in [q]$ the pair $B_i, B_j$ \emph{touch}, i.e., $B_i \cap B_j \neq \emptyset$ or there is some $u \in B_i$ and $v \in B_j$ with $uv \in E(G)$.
A~\emph{hitting set} of~$\{B_1, \ldots, B_q\}$ is a~set $X$ such that for every $i \in [q]$, $X \cap B_i \neq \emptyset$, that is, $X$ intersects every $B_i$.
The \emph{order} of bramble $\mathcal B$ is the minimum size of a~hitting set of $\mathcal B$.

The \emph{bramble number} of $G$, denoted by $\bn(G)$, is the maximum order of a~bramble of~$G$.
The treewidth and bramble number are tied: for every graph $G$, $\tw(G)=\bn(G)-1$~\cite{SeymourT93}.

\medskip

A~subset $S$ of vertices in a~graph~$G$ is an \emph{$\alpha$-balanced separator} if every connected component of $G-S$ has at~most~$\alpha |V(G)|$ vertices.
The \emph{separation number} of a~graph~$G$, denoted by $\sn(G)$, is the smallest integer $s$ such that every (induced) subgraph of~$G$ has a~$\frac{2}{3}$-balanced separator of size at~most~$s$.
Treewidth and separation number are linearly tied.
\begin{theorem}[\cite{DvorakN19}]\label{thm:tw-sn}
  For every graph $G$, $\sn(G) - 1 \leqslant \tw(G) \leqslant 15 \sn(G)$.
\end{theorem}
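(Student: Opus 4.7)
The plan is to prove the two inequalities separately. The lower bound $\sn(G) \le \tw(G) + 1$ is a classical folklore fact about tree decompositions, while the upper bound $\tw(G) \le 15\,\sn(G)$ is the nontrivial contribution of Dvořák and Norin~\cite{DvorakN19}. For the lower bound, I would take any tree decomposition $(T,\beta)$ of $G$ of width $w = \tw(G)$ and extract a $\frac{2}{3}$-balanced separator of size at most $w+1$ as a single bag $\beta(x^{\ast})$. A standard centroid argument suffices: orient each edge $xy \in E(T)$ toward whichever side contains more vertices of $G$ outside $\beta(x) \cap \beta(y)$; since $T$ is a tree there is a node $x^{\ast}$ with every incident edge pointing inward, and the vertices of $G$ sitting in the subtrees hanging off $x^{\ast}$ form components of $G - \beta(x^{\ast})$ of size at most $\frac{1}{2}|V(G)|$. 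The same argument applied to every induced subgraph (whose treewidth is at most $\tw(G)$) delivers $\sn(G) \le \tw(G) + 1$.

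For the upper bound I would construct a tree decomposition of width at most $15\,\sn(G) - 1$ by a recursive partitioning procedure. The template is: maintain a ``root'' bag $B$ and a subgraph $H$ to be decomposed, with the invariant $|B| \le c \cdot s$ for a well-chosen constant $c$ and $s = \sn(G)$; find a $\frac{2}{3}$-balanced separator $S$ of $H$ of size at most $s$; create the bag $B \cup S$; and recurse into each component $C$ of $H - S$ with a new root carefully carved out of $(B \cup S) \cap N_H[C]$ so that the invariant $|B| \le cs$ is preserved. A cleaner alternative is to argue by contradiction through the bramble duality $\tw(G) = \bn(G) - 1$: assuming $\bn(G) \ge 15s + 2$, one would isolate inside a bramble of such high order a sub-bramble concentrated on an induced subgraph $H$ whose every $\frac{2}{3}$-balanced separator must exceed $s$, contradicting $\sn(G) = s$.

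The main obstacle is undoubtedly the linear constant in the upper bound. A naïve recursion along balanced separators yields only $\tw(G) = O(\sn(G) \cdot \log |V(G)|)$, which is useless both in the infinite setting of layered wheels and for the logarithmic-treewidth upper bound wanted elsewhere in this paper (as in \cref{thm:lw-log-tw-ub}). Eliminating the logarithmic factor requires a careful amortization, letting bag vertices be reused across many recursive levels while still keeping each bag of size $O(s)$. Tracking the precise inequalities that survive this reuse — or equivalently, the precise trade-off between the order of the sub-bramble extracted and the size of the induced subgraph hosting it — is where the value $15$ is paid for, and is the only genuinely delicate part of the argument.
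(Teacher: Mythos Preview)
The paper does not prove this theorem; it is quoted from Dvo\v{r}\'ak and Norin~\cite{DvorakN19} as a black box and used only through \cref{lem:tw-sn-convert}. So there is no ``paper's own proof'' to compare against.

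Your outline is accurate as a description of what a proof would have to contain: the inequality $\sn(G)\le \tw(G)+1$ is indeed the standard centroid-bag argument you sketch, and the content of~\cite{DvorakN19} is precisely the removal of the $\log|V(G)|$ factor from the naive recursive construction. However, what you have written is a plan, not a proof: you explicitly defer the amortization that turns $O(\sn(G)\log|V(G)|)$ into $O(\sn(G))$ and acknowledge that this is ``the only genuinely delicate part.'' That part is essentially the entire theorem. If the intent is merely to cite the result (as the paper does), your write-up is already more than is needed; if the intent is to supply a self-contained proof, you would still have to carry out the bookkeeping in either of the two approaches you mention, and neither is short.
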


Small $\frac{2}{3}$-balanced separators are yielded by small(er) $\alpha$-balanced separators with $\alpha < 1$.

\begin{lemma}\label{lem:convert-bs}
  Fix any graph $G$ and $\alpha \in [\frac{2}{3},1)$.
  If every (induced) subgraph of~$G$ admits an $\alpha$-balanced separator of size at~most~$s$, then $G$ has separation number at~most $\frac{\log(2/3)}{\log \alpha} \cdot s$.
\end{lemma}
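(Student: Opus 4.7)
The argument is a classical amplification of $\alpha$-balanced separators: I fix an arbitrary induced subgraph $H$ of $G$ with $n := |V(H)|$ and build a $\tfrac{2}{3}$-balanced separator $S$ of $H$ of size at most $\tfrac{\log(2/3)}{\log \alpha} \cdot s$. Start with $S = \emptyset$ and iterate the following step while $H - S$ contains a connected component $C$ with $|C| > \tfrac{2}{3}n$: invoke the hypothesis on the induced subgraph $H[C]$ (which is itself an induced subgraph of $G$) to obtain an $\alpha$-balanced separator $S_C$ of $H[C]$ with $|S_C| \leq s$, and update $S \leftarrow S \cup S_C$.

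Two observations make the counting work. First, at any stage at most one component of $H - S$ can have more than $\tfrac{2}{3}n$ vertices (otherwise two such components would together contain more than $n$ vertices of $H$), so every other component already has size less than $\tfrac{1}{3}n < \tfrac{2}{3}n$ and will never be touched by a later iteration. Second, an induction on the iteration count $k$ shows that the unique potentially-oversized component has size at most $\alpha^k n$: at iteration $k$, splitting a component of size $\leq \alpha^{k-1} n$ using an $\alpha$-balanced separator leaves sub-components of size at most $\alpha \cdot \alpha^{k-1} n = \alpha^k n$, and the surviving smaller components from earlier iterations were already below the $\tfrac{2}{3}n$ threshold.

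Hence the loop terminates as soon as $\alpha^k n \leq \tfrac{2}{3} n$, i.e.\ after at most $\lceil \log(2/3) / \log \alpha \rceil$ iterations, producing a $\tfrac{2}{3}$-balanced separator of $H$ of total size at most $\tfrac{\log(2/3)}{\log \alpha} \cdot s$ (the implicit rounding is absorbed by the integer nature of $\sn$). Since $H$ was an arbitrary induced subgraph of $G$, this bounds $\sn(G)$. No serious obstacle is expected; the only delicate point is the single-component observation above, which is what allows the recursion to focus on one component per iteration rather than branching over all pieces, keeping the accumulated separator size linear in $k$.
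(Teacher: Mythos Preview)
Your proposal is correct and follows essentially the same iterative argument as the paper: repeatedly separate the unique oversized component until all components drop below $\tfrac{2}{3}n$, and bound the number of rounds by $\log(2/3)/\log\alpha$. Your explicit justification that at most one component can exceed $\tfrac{2}{3}n$ is a detail the paper leaves implicit, and both versions share the same slight looseness about the ceiling (your parenthetical about $\sn$ being integer does not actually absorb it, but the paper elides this too).
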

\begin{proof}
  Take any subgraph $H$ of~$G$, and define $S_i$ as an $\alpha$-balanced separator of size at~most~$s$ of the largest component of $H - \bigcup_{1 \leqslant j < i} S_j$, while this largest component has size more than $\frac{2}{3} |V(H)|$.
  Say that this process eventually defines the sets $S_1, \ldots, S_t \subseteq V(H)$.
  As the size of the largest component is multiplied by a~factor of at~most~$\alpha$ at every iteration, and $\alpha^{\log(2/3)/\log \alpha} = \frac{2}{3}$, we have  $t \leqslant \frac{\log(2/3)}{\log \alpha}$.
  Hence $H$ has a~$\frac{2}{3}$-balanced separator $\bigcup_{i \in [t]} S_i$ of size at~most $st$, so $G$ has separation number at~most~$\frac{\log(2/3)}{\log \alpha} \cdot s$.
\end{proof}

We will rely on the combination of~\cref{thm:tw-sn,lem:convert-bs}.
\begin{lemma}\label{lem:tw-sn-convert}
  Fix any graph $G$ and $\alpha \in [\frac{2}{3},1)$.
  If every (induced) subgraph of~$G$ admits an $\alpha$-balanced separator of size at~most~$s$, then $G$ has treewidth at~most~$15 \cdot \frac{\log(2/3)}{\log \alpha} \cdot s$.
\end{lemma}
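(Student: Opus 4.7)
The statement is a direct chaining of the two results immediately preceding it in the excerpt, so the plan is essentially a one-line composition. First I would invoke \cref{lem:convert-bs}: under the hypothesis that every induced subgraph of $G$ admits an $\alpha$-balanced separator of size at most $s$ (for some fixed $\alpha \in [\frac{2}{3},1)$), that lemma yields $\sn(G) \leqslant \frac{\log(2/3)}{\log \alpha} \cdot s$. Note that the ratio $\frac{\log(2/3)}{\log \alpha}$ is a well-defined positive real because $\alpha \in [\frac{2}{3},1)$ makes $\log \alpha$ a negative number of absolute value at most $\log(3/2)$, and $\log(2/3)$ is likewise negative, so the quotient is at least $1$.

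Then I would plug this separation-number bound into the right-hand inequality of \cref{thm:tw-sn}, which states $\tw(G) \leqslant 15 \cdot \sn(G)$ for every graph $G$. Composing the two inequalities immediately gives
\[
  \tw(G) \;\leqslant\; 15 \cdot \sn(G) \;\leqslant\; 15 \cdot \frac{\log(2/3)}{\log \alpha} \cdot s,
\]
which is exactly the desired bound.

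There is no genuine obstacle here: the work has already been done in \cref{lem:convert-bs} (the iterative peeling of $\alpha$-balanced separators until the largest component shrinks below $\frac{2}{3}|V(H)|$) and in the cited \cref{thm:tw-sn} of Dvo\v{r}\'ak and Norin. The only mild point to mention is that \cref{lem:convert-bs} is applied to $G$ itself (not to a specific subgraph), but its hypothesis is that \emph{every} induced subgraph of $G$ has a small $\alpha$-balanced separator, which is exactly what we assume here, and the same is then true for every induced subgraph of any induced subgraph of $G$, so the hypotheses of \cref{lem:convert-bs} are satisfied verbatim. Hence the proof reduces to a two-step deduction and requires no further argument.
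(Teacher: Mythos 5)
Your proof is correct and is exactly the paper's intended argument: the paper introduces \cref{lem:tw-sn-convert} with the sentence ``We will rely on the combination of~\cref{thm:tw-sn,lem:convert-bs},'' and offers no further proof, since the deduction is precisely the composition you wrote. Nothing to add.
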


%\subsection{Minors and induced minors}\label{sec:induced-subd-min} 

%A~\emph{minor model} (resp.~\emph{induced minor model}) of $H$ in~$G$ is a~collection $\mathcal M := \{X_1, \ldots, X_{|V(H)|}\}$ of pairwise-disjoint connected subsets of $V(G)$, called \emph{branch sets}, together with a~bijective map $\phi: V(H) \to \mathcal M$ such that for every $uv \in E(H)$, there is at least one edge between $\phi(u)$ and $\phi(v)$ (resp.~$uv \in E(H)$ if and only if there is at~least one edge in $G$ between $\phi(u)$ and $\phi(v)$).
%%In which case, we may say that the branch sets $\phi(u)$ and $\phi(v)$ are \emph{adjacent}.
%We then say that $H$ is a~\emph{minor} (resp.~\emph{induced minor}) of $G$, or otherwise that $G$ is $H$-minor-free (resp.~$H$-induced-minor-free).
%Equivalently, $H$ can be obtained from $G$ after a~series of vertex and edge deletions (resp.~vertex deletions only), and edge contractions.

%If a~graph $G$ admits $K_t$ as an (induced) minor then its model is nothing else than a~bramble.
%In particular, $G$ then has treewidth at~least~$t-1$. 

\subsection{Twin-width and oriented twin-width}\label{sec:tww}

A~\emph{partition sequence} $\mathcal P_n, \ldots, \mathcal P_1$ of an $n$-vertex graph $G$ is such that $\mathcal P_i$ is a partition of $V(G)$ for every $i \in [n]$, $\mathcal P_n = \{\{v\}~|~v\in V(G)\}$, and for every $i \in [n-1]$, $\mathcal P_i$ is obtained from $\mathcal P_{i+1}$ by merging two parts $X, Y \in \mathcal P_{i+1}$ into one: $X \cup Y$.
In particular, note that $\mathcal P_1=\{V(G)\}$.
For every $i \in [n]$, we denote by $\mathcal R(\mathcal P_i)$ the graph with one vertex per part of~$\mathcal P_i$, and an edge between $P \neq P' \in \mathcal P_i$ whenever there are (possibly equal) $u, v \in P$ and $u', v' \in P'$ such that $uu' \in E(G)$ and $vv' \notin E(G)$.
The \emph{twin-width} of~$G$, denoted by $\tww(G)$, is the least integer~$d$ for which $G$ admits a~partition sequence $\mathcal P_n, \ldots, \mathcal P_1$ such that for every $i \in [n]$, the graph $\mathcal R(\mathcal P_i)$ has maximum degree at~most~$d$.

From the definition of twin-width, it can be observed that the twin-width of a~graph is at~least the twin-width of any of its induced subgraphs.
\begin{observation}[\cite{twin-width1}]\label{obs:tww-ind-sub}
  For any graph $G$ and any induced subgraph $H$ of~$G$, $\tww(H) \leqslant \tww(G)$.
\end{observation}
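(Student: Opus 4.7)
The plan is to show that an optimal partition sequence for $G$ can be restricted to $V(H)$ to yield a partition sequence for $H$ of no larger twin-width. Let $d = \tww(G)$ and fix a partition sequence $\mathcal P_n, \ldots, \mathcal P_1$ of $G$ witnessing this bound, i.e., such that each $\mathcal R(\mathcal P_i)$ has maximum degree at most~$d$. For each $i \in [n]$, define the \emph{restriction} $\mathcal P_i|_H$ to be the partition of $V(H)$ into the nonempty sets of the form $P \cap V(H)$ with $P \in \mathcal P_i$.

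First I would verify that $\mathcal P_n|_H, \ldots, \mathcal P_1|_H$ is a valid partition sequence of $H$ after collapsing consecutive duplicates. Restricting the singleton partition $\mathcal P_n$ yields the singleton partition of $V(H)$, and restricting $\mathcal P_1 = \{V(G)\}$ yields $\{V(H)\}$. Moving from $\mathcal P_{i+1}$ to $\mathcal P_i$ merges two parts $X, Y$ into $X \cup Y$; after restriction this either merges $X \cap V(H)$ with $Y \cap V(H)$ (if both are nonempty) or leaves the partition of $V(H)$ unchanged (if one is empty). Dropping repeated partitions yields a sequence of length $|V(H)|$ as required.

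Next I would bound the red degree of the restricted sequence. Suppose $P \cap V(H)$ and $P' \cap V(H)$ are distinct parts of $\mathcal P_i|_H$ joined by an edge in $\mathcal R(\mathcal P_i|_H)$: there exist $u, v \in P \cap V(H)$ and $u', v' \in P' \cap V(H)$ with $uu' \in E(H)$ and $vv' \notin E(H)$. Because $H$ is an induced subgraph of $G$, we have $uu' \in E(G)$ and $vv' \notin E(G)$ as well, so $PP'$ is an edge of $\mathcal R(\mathcal P_i)$. This gives an injection from the edges of $\mathcal R(\mathcal P_i|_H)$ incident to $P \cap V(H)$ into the edges of $\mathcal R(\mathcal P_i)$ incident to $P$, so the red degree of $P \cap V(H)$ in $\mathcal R(\mathcal P_i|_H)$ is at most the red degree of $P$ in $\mathcal R(\mathcal P_i)$, hence at most~$d$.

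There is no real obstacle here: the only subtlety is making sure that the restriction of a merge step is again a valid step (possibly a no-op) and that the induced-subgraph hypothesis is precisely what lets edges and non-edges in $H$ be promoted to edges and non-edges in $G$. Both points are immediate, so the restricted sequence witnesses $\tww(H) \leqslant d = \tww(G)$.
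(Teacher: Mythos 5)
Your proof is correct, and it is the standard argument: restrict the contraction (partition) sequence of $G$ to $V(H)$, drop the no-op steps, and check that the induced-subgraph hypothesis is exactly what keeps red edges from being created. The paper itself does not prove this observation — it simply states it as an immediate consequence of the definition and cites the original twin-width paper — so there is nothing to compare beyond noting that what you wrote out is the routine proof the authors had in mind. One small remark: it is worth being explicit, as you are, that distinct parts $P\cap V(H)\ne P'\cap V(H)$ of the restricted partition come from distinct parts $P\ne P'$ of the original, which is what makes your map on red edges a genuine injection.
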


We also define the digraph $\overrightarrow{\mathcal R}(\mathcal P_i)$, also on vertex set $\mathcal P_i$, with an arc from $P \in \mathcal P_i$ to $P' \in \mathcal P_i \setminus \{P\}$ whenever there are $u \neq v \in P$ and some $w \in P'$ such that $uw \in E(G)$ and $vw \notin E(G)$.
Similarly, the \emph{oriented twin-width} of~$G$, denoted by $\otww(G)$, is the least integer~$d$ for which $G$ admits a~partition sequence $\mathcal P_n, \ldots, \mathcal P_1$ such that for every $i \in [n]$, the digraph $\overrightarrow{\mathcal R}(\mathcal P_i)$ has maximum outdegree at~most~$d$.

It is immediate from the definition that the twin-width of any graph is upper bounded by its oriented twin-width.
Rather surprisingly, twin-width and oriented twin-width are functionally equivalent.
\begin{theorem}[\cite{twin-width6}]\label{thm:otww-tww}
  There is some~$c$ such that for every $G$, $\tww(G) \leqslant \otww(G) \leqslant 2^{2^{c \cdot \tww(G)}}$.
\end{theorem}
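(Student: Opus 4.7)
The plan is to establish the two inequalities separately. For the easy bound $\tww(G) \leq \otww(G)$, I would start from a partition sequence $\mathcal P_n, \ldots, \mathcal P_1$ witnessing oriented twin-width $d$. An undirected edge $PP' \in E(\mathcal R(\mathcal P_i))$ is witnessed by some $u,v \in P$ and $u',v' \in P'$ with $uu' \in E(G)$ and $vv' \notin E(G)$; a small case analysis on whether $u=v$ or $u'=v'$ shows that either $P \to P'$ or $P' \to P$ is an arc of $\overrightarrow{\mathcal R}(\mathcal P_i)$. Hence $\mathcal R(\mathcal P_i)$ has at most $d \cdot |\mathcal P_i|$ edges and is $2d$-degenerate. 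A standard reordering of the merges (always contracting into a part of smallest current red out-degree, and absorbing the constant into the definition) upgrades degeneracy into bounded maximum degree and yields the first inequality.

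For the harder direction $\otww(G) \leq 2^{2^{c \cdot \tww(G)}}$, I would start from a partition sequence witnessing $\tww(G) \leq d$ and try to refine each $\mathcal P_i$ so that every part becomes homogeneous with respect to its red neighbors. Since every part $P \in \mathcal P_i$ has at most $d$ red neighbors in $\mathcal R(\mathcal P_i)$, each vertex of $P$ is described by a $\{0,1\}$-vector of length $d$ recording its pattern of adjacencies to those neighbor-parts, giving a~priori at most $2^d$ ``types'' per part. Splitting $P$ along these types kills the inhomogeneous incident edges at the cost of blowing up the partition: one trades red out-degree (which is what $\overrightarrow{\mathcal R}$ tracks) for a larger number of parts. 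The delicate point is that this refinement must be propagated consistently through every $\mathcal P_j$ with $j \leq i$, since refinements at later moments alter what earlier moments look like.

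The main obstacle, and the source of the doubly-exponential bound, is controlling the cumulative blow-up across all $n$ levels of the sequence. I would push the refinement bottom-up along $\mathcal P_n, \ldots, \mathcal P_1$: each merge step can, in principle, multiply the local type count by up to $2^d$, which iterated naively would give $2^{dn}$ and therefore no uniform bound. To obtain a bound depending only on $d$, I would invoke a Sauer--Shelah-style argument: the set system of neighborhood traces arising in $\mathcal R(\mathcal P_i)$ has VC dimension bounded by a function of $d$ (because max red degree is $d$), so the number of distinct realized types at any step is bounded by a function of $d$ rather than of $n$. Iterating the refinement through the sequence while controlling the types via this VC bound forces a double exponential in $d$, from which the claimed $2^{2^{c \cdot d}}$ emerges. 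Lifting the refined sequence to a genuine contraction sequence and verifying that $\overrightarrow{\mathcal R}$ indeed has bounded out-degree at every step would be the final bookkeeping, with all constant factors absorbed into $c$.
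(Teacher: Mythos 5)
This theorem is cited from~\cite{twin-width6} and not proved in the paper, so there is no internal proof to compare against; I assess the proposal on its own merits. Note first that, with the definitions of \cref{sec:tww}, the underlying undirected graph of $\overrightarrow{\mathcal R}(\mathcal P_i)$ is exactly $\mathcal R(\mathcal P_i)$: your case analysis gives one inclusion, and an arc trivially certifies the corresponding undirected edge. Hence the out-degree of any part in $\overrightarrow{\mathcal R}(\mathcal P_i)$ is at most its degree in $\mathcal R(\mathcal P_i)$, so a partition sequence witnessing $\tww(G)\leqslant d$ already witnesses $\otww(G)\leqslant d$. The genuinely immediate inequality is therefore $\otww(G)\leqslant\tww(G)$, and the content that requires a double exponential is $\tww(G)\leqslant 2^{2^{c\cdot\otww(G)}}$, i.e.\ upgrading a sequence with bounded red out-degree into one with bounded red maximum degree. (The two inequalities in the paper's display appear to be swapped relative to its own definitions; the application later in the paper, bounding $\otww$ to bound $\tww$, is consistent with the corrected form.)

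Given this, both halves of your argument are misaimed. The half you call easy, going from an $\otww$-witness to a $\tww$-witness with only a constant-factor loss, is in fact the hard direction and cannot succeed with the identity, or the theorem would carry no double exponential. The step that fails concretely is the ``standard reordering of merges'': a partition sequence $\mathcal P_n,\ldots,\mathcal P_1$ is a chain of coarsenings, so reordering the merges changes every intermediate $\mathcal P_i$ and hence every red graph, after which the bounded-out-degree hypothesis about the original sequence says nothing about the new one; moreover, $2d$-degeneracy of a single $\mathcal R(\mathcal P_i)$ does not imply the existence of some other sequence whose red graphs all have bounded maximum degree. The half you call hard, $\otww(G)\leqslant 2^{2^{c\cdot\tww(G)}}$, is in fact trivially true with the identity, so the refinement machinery is superfluous there; it is also structurally off, since refining $\mathcal P_i$ mid-sequence breaks the coarsening chain and no mechanism is given for producing a valid partition sequence afterwards. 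The Sauer--Shelah idea is a plausible guess at where a double exponential could come from in the genuine hard direction, but it is not turned into an argument.
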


This simplifies the task of bounding the twin-width of a~class.
One can just upper bound its oriented twin-width.
We see such an example with a~proof of~\cref{thm:tww-of-ur-lw}.

\begin{reptheorem}{thm:tww-of-ur-lw}
The class of finite induced subgraphs of any neat, upward-restricted layered wheel has bounded twin-width.
\end{reptheorem}
\begin{proof}
  Fix a~neat, upward-restricted layered wheel $W$ with rooted tree~$T$.
  As $W$ is upward-restricted there is an integer~$t$ such that for every node $v$ of~$T$, there is a~set $X_v$ of at~most~$t$ ancestors of~$v$ with the property that every non-layer edge of~$W$ with exactly one endpoint among the descendants of~$v$ has its other endpoint in~$X_v$.
  
  For every natural number $n$, let $W_{\leqslant n}$ be the finite subgraph of~$W$ induced by its first $n$ layers.
  By~\cref{obs:tww-ind-sub}, we just need to upper bound the twin-width of~$W_{\leqslant n}$ (independently of~$n$).
  By~\cref{thm:otww-tww}, it is enough to upper bound the oriented twin-width of~$W_{\leqslant n}$.

  We build a~partition sequence $\mathcal S$ of~$W_{\leqslant n}$ as follows.
  For every $i$ from $n$ down to 1, denote by $p_1, \ldots, p_h$ the parents of the nodes in the $i$th layer, from left to right.
  For every $j$ from 1 to $h$, denote by $q_{j,1}, \ldots, q_{j,a_j}$ the children of $p_j$, from left to right.
  For every $j$ from 1 to $h$, for every $k$ from 2 to $a_j$, merge the part containing $\{q_{j,1}, \ldots, q_{j,k-1}\}$ with the part containing~$q_{j,k}$.
  After this is done, for every $j$ from 1 to $h$, merge the part containing $\{q_{j,1}, \ldots, q_{j,a_j}\}$ with the part~$\{p_j\}$.
  Note that, as $W$ is neat, this well-defines a~partition sequence of~$W_{\leqslant n}$. 

  Take any partition $\mathcal P$ of $\mathcal S$, and any part $P \in \mathcal P$.
  We claim that the outdegree of $P$ in $\overrightarrow{\mathcal R}(\mathcal P)$ is at~most~$t+3$.
  Note that $P$ has either a~unique topmost vertex in $T$, or all its topmost vertices in $T$ share the same parent.
  In the former case, let $v$ be this unique topmost vertex.
  Part $P$ has at~most~$t+3$ outneighbors in $\overrightarrow{\mathcal R}(\mathcal P)$: at most one part to its left, at most two parts to its right, and at most one part per vertex in~$X_v$.
  In the latter case, let $v$ be the shared parent.
  Part $P$ has again at~most~$t+3$ outneighbors in $\overrightarrow{\mathcal R}(\mathcal P)$: at most one part to its left, at most one part to its right, and at most one part per vertex in~$X_v \cup \{v\}$.
\end{proof}

\section{Chordal trigraphs and their tree representations}
\label{sec:trigraphs}

It will be convenient to work with \emph{trigraphs}.\footnote{Here the trigraphs and red edges are unrelated to twin-width.}
Trigraphs are graphs with two disjoint edge sets: black edges and red edges.
Red edges should be viewed here as virtual edges, and will not be part of the eventual construction.
They are however useful for the inductive definitions and in the proofs.

A~\emph{trigraph} is a triple $(V, E_B, E_R)$, where $E_B$ and $E_R$ are disjoint subsets of $V \choose 2$ called the set of \emph{black} (or \emph{real}) and \emph{red} (or \emph{virtual}) edges, respectively.  
For any $u, v \in V(G)$, the \emph{adjacency type} of $uv$ is defined as `black' if $uv \in E_B(G)$, or `red' if $uv \in E_R(G)$, or `non-edge' otherwise.
The graph $(V, E_B\cup E_R)$, denoted by $\mathcal T(G)$, is called the \emph{total graph} of~$G$, and $(V, E_B)$ the \emph{real} graph of $G$.
A~trigraph is said to be \emph{chordal} if its total graph is chordal.

Two trigraphs $G$ and $H$ are \emph{isomorphic} if there is a~bijection $f$ from $V(G)$ to $V(H)$ such that $uv$ (in $G$) and $f(u)f(v)$ (in $H$) have the same adjacency type, for every $u, v \in V(G)$.
When $X\subseteq V(G)$, we denote by $G[X]$ \emph{the subtrigraph of~$G$ induced by $X$}, that is, the trigraph $(X, E_B(G) \cap {X \choose 2}, E_R(G) \cap {X \choose 2})$.
We say that that a~trigraph $H$ is an \emph{induced subtrigraph} of a~trigraph $G$ if for some set $X\subseteq V(G)$, $H$ is isomorphic to $G[X]$.
A trigraph is \emph{$H$-free} if $H$ is not an induced subtrigraph of $G$.

When $T$ is a rooted tree, we use the words \emph{parent}, \emph{child}, \emph{ancestor}, and \emph{descendant} with their usual meaning. 
A~node is its own ancestor and descendant.
We use the notation $\parent(v)$ for the parent of a~non-root node $v$, $\anc(v)$ for the set of ancestors of $v$, and $\desc(v)$ for the set of descendants of $v$.
Nodes $u, v$ are \emph{siblings} in $T$ if they have the same parent.
The \emph{depth} of a~node is its distance to the root.

A~\emph{tree representation} of a~chordal trigraph $H'$ with total graph $H := \mathcal T(H')$ is a~rooted tree $T$ on the same vertex set such that:
\medskip
\begin{compactenum}
\item \label{tr:1} for every $v\in V(H)$, $N_H(v) \cap \anc(v)$ is a clique in $H$,
\item  \label{tr:2} for every non-root vertex $v\in V(H)$,
  $$N_H(v) \subseteq \desc(v) \cup (\anc(v) \cap N_H[\parent(v)]),~\text{and}$$ 
\item  \label{tr:3} if $u$ and $v$ are siblings in $T$, then $u$ and $v$ have a~common ancestor $x$ such that $ux$ and $vx$ have distinct adjacency types in~$H'$.
\end{compactenum}

\begin{figure}
  \begin{center}
    \includegraphics[width=8cm]{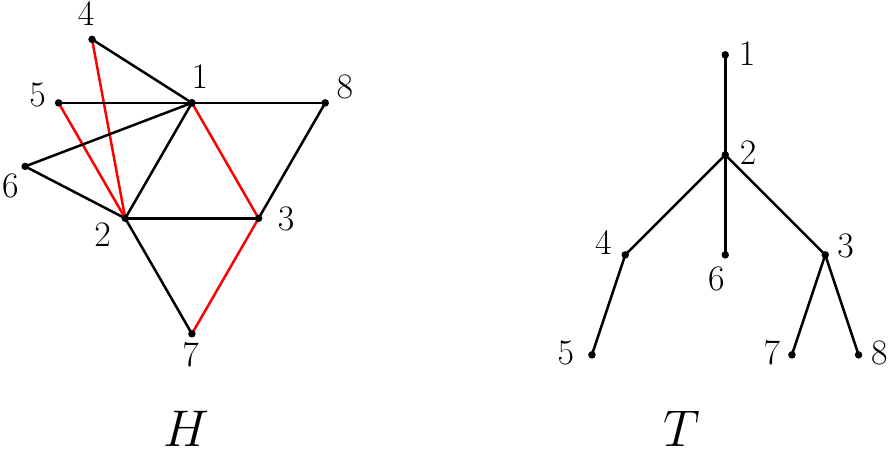}
  \end{center}
  \caption{A chordal trigraph $H$ and a tree representation $T$ of $H$.}
   \label{f:rep}
\end{figure}

\medskip
Condition~\ref{tr:3}, the only condition depending on $H'$ rather than merely on its total graph, is called \emph{sibling condition}.
It imposes that siblings cannot have the exact same adjacency types toward their (common) strict ancestors.
An example is represented in \cref{f:rep}. 
Observe that vertices 4 and 5 are adjacent in $T$ but not in $H$; so in general, $T$ is \emph{not} a~spanning tree of~$H$.
More classical representations of chordal graphs would make vertices 4 and 5 siblings.
Here they cannot because of our sibling condition.
Nonetheless, the following holds.

\begin{lemma}\label{lem:tr}
  Every finite chordal trigraph admits a tree representation. 
\end{lemma}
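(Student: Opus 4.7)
I would prove this by induction on $n := |V(H')|$, the base case $n=1$ being immediate (the one-node rooted tree satisfies the three conditions vacuously). For the inductive step, apply \cref{fact:simpG} to the chordal graph $H := \mathcal T(H')$ to obtain a~simplicial vertex~$v$, and set $K := N_H(v)$, which is a~clique of~$H$. By induction, $H' - v$ admits a~tree representation~$T'$, and I plan to insert $v$ as a~new leaf of~$T'$ at a~carefully chosen node, producing the desired tree~$T$.

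A~preliminary observation is that in any tree representation, every clique is totally ordered by the ancestor--descendant relation: if distinct $x, y$ in a~clique $Q$ were incomparable in~$T'$, then $y \in N_H(x)$ while $y \notin \desc(x) \cup \anc(x)$, contradicting condition~\ref{tr:2} at~$x$. Hence $K = \{u_1, \ldots, u_k\}$ with $u_i$ a~strict ancestor of~$u_{i+1}$ in~$T'$. The natural first guess is to make $v$ a~child of~$u_k$; this would take care of conditions~\ref{tr:1} and~\ref{tr:2}, but may violate the sibling condition~\ref{tr:3} if some existing child of~$u_k$ has the same adjacency profile toward $\anc(u_k) \cup \{u_k\}$ as~$v$ does.

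To fix this, I use a~match-and-descend procedure. Call a~child $w$ of $y \in V(T')$ a~\emph{$v$-match} if $w$'s adjacency types in~$H'$ to every vertex of~$\anc(y) \cup \{y\}$ coincide with the adjacency types $v$ would have there, namely the type of~$vu_i$ in~$H'$ at any $u_i \in K$ and non-edge elsewhere. Condition~\ref{tr:3} on~$T'$ forces at~most one $v$-match per node. Starting from $y_0 := u_k$, iteratively set $y_{j+1}$ to be the (unique) $v$-match child of~$y_j$ while one exists; since $T'$ is finite the sequence terminates at some~$y_t$. Define~$T$ by making $v$ a~child of~$y_t$.

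An easy induction on~$j$ shows that every $y_j$ is a~descendant of~$u_k$ and satisfies $K \subseteq N_H[y_j]$ (for $y_0 = u_k$ this holds because $K$ is a~clique; each descent step preserves it thanks to the $v$-matching property). Hence condition~\ref{tr:2} holds at~$v$, since $N_H(v) = K \subseteq \anc(v) \cap N_H[\parent(v)]$; condition~\ref{tr:1} holds at~$v$ since $N_H(v) \cap \anc(v) = K$ is a~clique; and condition~\ref{tr:3} at~$v$ holds by construction, the descent having stopped precisely because no child of~$y_t$ matches $v$'s profile, so every sibling of~$v$ differs from~$v$ in adjacency type at some common ancestor. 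All three conditions at every $x \neq v$ are inherited from~$T'$ because $v$ is a~new leaf: $N_H(x)$, $\anc(x)$ and $\parent(x)$ are unchanged, and $\desc(x)$ only grows. I expect the focal point of the argument to be verifying that the match-and-descend procedure is well-defined and lands at a~node whose closed neighborhood still contains~$K$, both of which hinge on conditions~\ref{tr:2} and~\ref{tr:3} applied inside~$T'$.
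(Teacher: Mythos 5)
Your proof takes essentially the same route as the paper's: remove a simplicial vertex $v$, take a tree representation $T'$ of $H'-v$ by induction, locate the deepest member $u_k$ of the clique $K = N_H(v)$ in $T'$, and descend through the unique (by the sibling condition) matching child until none exists, attaching $v$ there. The paper phrases this as picking the deepest matching descendant in one shot rather than iteratively, but the two are interchangeable since both land at a node whose children contain no $v$-match; the verification of conditions~\ref{tr:1}--\ref{tr:3} you spell out is exactly what the paper leaves as ``easy to check.''

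There is, however, one missing case. If $v$ is isolated in $\mathcal T(H')$, then $K = N_H(v) = \emptyset$, so $u_k$ is undefined and your match-and-descend procedure has no starting point. The paper handles this by attaching $v$ as a child of an arbitrary leaf of~$T'$, which makes the sibling condition at $v$ vacuous (no siblings) and conditions~\ref{tr:1} and~\ref{tr:2} trivial. Without this case the construction is simply not defined on all inputs, so it needs to be added. A smaller inaccuracy: you assert that $N_H(x)$ is unchanged for every $x \neq v$, but $v$ now belongs to $N_H(x)$ for every $x \in K$. This happens to be harmless, because $v \in \desc(x)$ and $v \notin \anc(x)$ for such $x$, so conditions~\ref{tr:1} and~\ref{tr:2} at $x$ are still preserved, but the claim as written is not quite correct.
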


\begin{proof}
  Fix any finite chordal trigraph $H'$ and set $H := \mathcal T(H')$.
  We prove the property by induction on $|V(H)|$.
  If $H$ (or $H'$) has a~single vertex, then a~single-node tree represents~$H'$.
  
  Suppose now that $|V(H)| \geq 2$.
  By \cref{fact:simpG}, there exists a simplicial vertex $v$ in $H$.
  Let $T'$ be a tree representation of $H' - \{v\} := H'[V(H') \setminus \{v\}]$ obtained from the induction hypothesis. 
  If $v$ is isolated in $H$, then define $T$ by adding $v$ as a~child of any (fixed) leaf in~$T'$.

  Otherwise, let $u$ be \emph{the} deepest vertex in $T'$ of $N_H(v)$.
  Since $v$ is a simplicial vertex, $N_H(v)$ is a~clique, so $u$ is well-defined.
  Indeed by Condition~\ref{tr:2}, every pair of adjacent vertices in $\mathcal T(H' - \{v\})$ are in an ancestor--descendant relation in~$T'$, and every clique of $\mathcal T(H' - \{v\})$ lies in a~single branch of~$T'$.
  
  If $u$ has no child $w$ in $T'$ such that for every strict ancestor $x$ of $w$, $wx$ and $vx$ have the same adjacency type in $H'$, then define $T$ by adding $v$ as a~child of~$u$ in $T'$. 
  Otherwise, such a~vertex $w$ exists, and it is unique by the sibling condition.
  Let $w'$ be a~deepest descendant of $w$ (possibly $w$ itself) such that for every strict ancestor $x$ of $w'$ in $T'$, we have that $w'x$ and $vx$ have the same adjacency type in $H'$.
  Then we build $T$ from $T'$ by adding $v$ as a child of~$w'$.

  In all cases, one can easily check that $T$ is indeed a~tree representation of~$H'$.
\end{proof}

A~\emph{chordal completion} of a~graph $G$ is any chordal graph $H$ such that $V(H) = V(G)$ and $E(G) \subseteq E(H)$.
It is known that the treewidth of a~graph $G$ is the minimum of the clique number of~$H$ minus one, overall chordal completions $H$ of~$G$. 
A~chordal completion of a~graph $G$ will also be thought as a~chordal trigraph $H$ such that $V(H) = V(G)$ and $E_B(H) = E(G)$.

\section{The new upward-restricted layered wheel}
\label{sec:construction}

We now construct the promised layered wheel $W_t$, in order to prove \cref{thm:main-bdd-tw}.
For that, we build a~countably infinite trigraph $G_t$ parameterized by a positive integer~$t$.

\medskip
\textbf{Construction of $\bm{G_t}$, $\bm{T_t}$, and $\bm{W_t}$.}
We now inductively build the trigraph $G_t$ and a~tree $T_t$ on the same vertex set as~$G_t$.
Note that $T_t$ is a graph (not a~trigraph), and is \emph{not} a~subgraph of $\mathcal T(G_t)$.

The vertex set of $G_t$ is partitioned into infinitely many layers $L_0, L_1, L_2, \ldots$, each inducing a~path of real edges of finite size.
The first layer consists of a~single vertex, which is the root of~$T_t$.
For every integer $i \geqslant 0$, let $L_{\leqslant i} := \bigcup_{0 \leqslant j \leqslant i} L_j$ and assume that $G_t[L_{\leqslant i}]$ and $T_t[L_{\leqslant i}]$ are
defined.
The induced path $L_{i+1}$ is built as follows.

For every $u \in L_i$ taken from left to right, where the first (resp.~last) vertex introduced in~$L_i$ is leftmost (resp.~rightmost), initialize $\children(u)$ to the empty set.
For every ordered pair $(B,R)$ of disjoint subsets of \[N^\uparrow[u] := (N_{\mathcal T(G_t)}(u) \cap L_{\leqslant i-1})\cup\{u\}\] such that $|B \cup R| \leqslant t$, append a vertex $v_{B,R}$ to $L_{i+1}$ at its rightmost end, putting the new edge in $E_B(G_t)$.
Add $v_{B, R}$ to the set $\children(u)$.
In $G_t$, add a black edge between $v_{B,R}$ and every vertex of~$B$ and a red edge between $v_{B,R}$ and every vertex of~$R$.
Every vertex of $\children(u)$ is a~child of~$u$ in $T_t$.
This finishes the inductive definitions of the trigraph $G_t$ and the tree $T_t$.

The layered wheel $W_t$ is the real graph of~$G_t$ (with rooted tree $T_t$), and $\mathcal C_t$ is the class of finite induced subgraphs of~$W_t$.
Note that $\mathcal T(G_t)$ is also a layered wheel with rooted tree $T_t$, and a~spanning supergraph of~$W_t$. In Figures~\ref{fig:new-lw-1}~and~\ref{fig:new-lw-2}, we illustrate the first 3 layers of $T_t, G_t$ and $W_t$ when $t = 1$, respectively $t \geq 2$.

\begin{figure}[h!]
  \centering
  \resizebox{400pt}{!}{
    \includegraphics[width=1\linewidth]{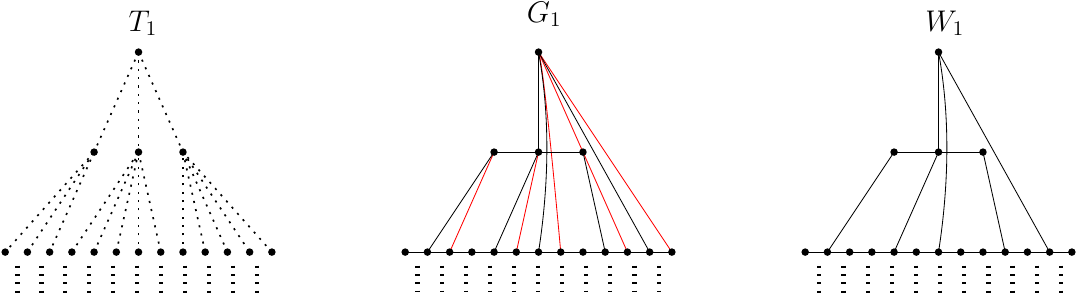}
  }
    \caption{First three layers of $T_1, G_1, W_1$.}
    \label{fig:new-lw-1}
\end{figure}

\begin{figure}
  \centering
  \resizebox{350pt}{!}{
    \includegraphics[width=1\linewidth]{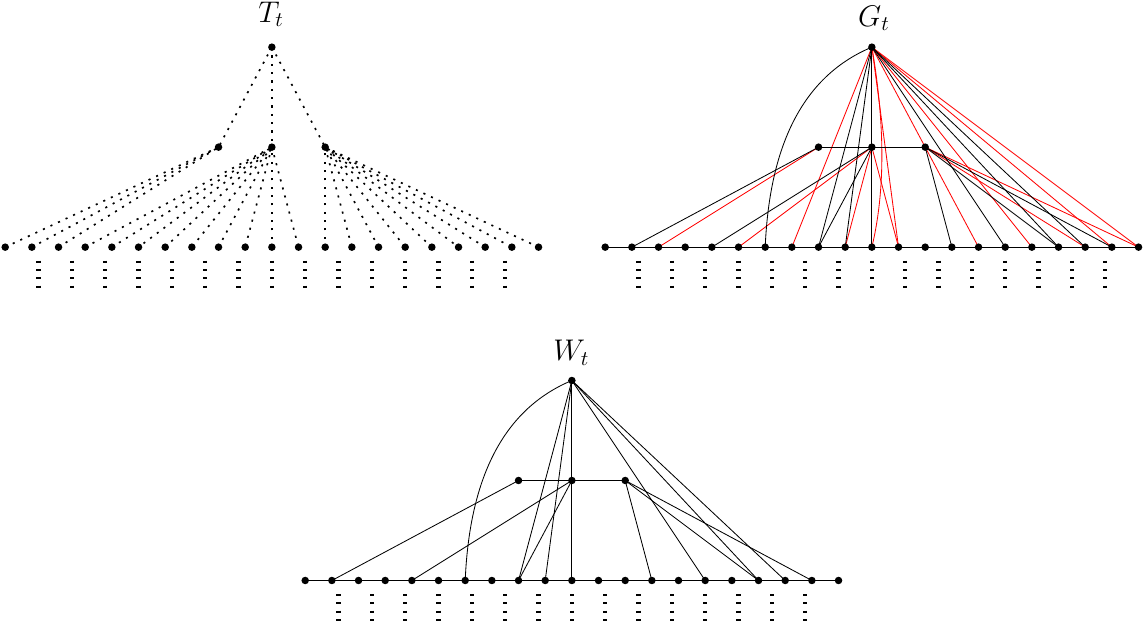}
  }
    \caption{First three layers of $T_t, G_t, W_t$ when $t \geq 2$.}
    \label{fig:new-lw-2}
\end{figure}

We make some observations.
\begin{observation}\label{lem:upward-degree}
  For every $u \in V(G_t)$, $|N^\uparrow[u]| \leqslant t+1$.
\end{observation}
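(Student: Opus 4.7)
The plan is to read off the observation directly from the inductive construction of $G_t$, with a straightforward case split on whether $u$ is the root.

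If $u$ is the root, then $u \in L_0$, so $N^\uparrow[u] = \{u\}$ has size $1 \leqslant t+1$.

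Otherwise $u$ belongs to some layer $L_{i+1}$ with $i \geqslant 0$, and by construction $u = v_{B,R}$ was appended when processing some parent node $p \in L_i$, for some ordered pair $(B,R)$ of disjoint subsets of $N^\uparrow[p] \subseteq L_{\leqslant i}$ satisfying $|B\cup R| \leqslant t$. At the moment $u$ is created, the only non-layer edges incident to $u$ are the black edges to $B$ and the red edges to $R$; the only layer edges involving $u$ are the (at most two) path edges of $L_{i+1}$, which stay inside $L_{i+1}$. Hence, right after $u$ is introduced,
\[
  N_{\mathcal T(G_t)}(u) \cap L_{\leqslant i} \;=\; B \cup R.
\]
The key point to emphasize is that subsequent steps of the construction only add edges from \emph{later} layers $L_{i+2}, L_{i+3}, \ldots$ to $u$ (namely, when later vertices $v_{B',R'}$ pick $u$ into their $B'$ or $R'$, or when $u$ serves as an intermediate element of some $N^\uparrow[\cdot]$ propagating downward). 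None of these later additions contribute a neighbor in $L_{\leqslant i}$, so the identity above is preserved in the final trigraph~$G_t$. Therefore
\[
  |N^\uparrow[u]| \;=\; |B \cup R| + 1 \;\leqslant\; t+1.
\]

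There is no real obstacle here; the only thing worth being pedantic about is confirming that layer edges never leave their layer and that edges recorded in higher-layer construction rounds always have their lower endpoint at the newly introduced vertex, so that $N_{\mathcal T(G_t)}(u) \cap L_{\leqslant i}$ is frozen at the moment $u$ is appended.
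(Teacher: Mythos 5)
Your proof is correct, and since the paper states this as an unproved observation (it follows immediately from the construction, with the "+1" accounting for $u$ itself and $|B\cup R|\leqslant t$ coming straight from the defining constraint on the pair $(B,R)$), your spelled-out argument is exactly the reasoning the paper leaves implicit. The one clarification worth noting: the tree edge from $u$ to its parent $p$ in $T_t$ is not automatically an edge of $\mathcal T(G_t)$, so $p$ contributes to $N^\uparrow[u]$ only if $p\in B\cup R$; your identity $N_{\mathcal T(G_t)}(u)\cap L_{\leqslant i}=B\cup R$ already handles this correctly.
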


\begin{observation}\label{obs:upward-restricted}
  The layered wheels $\mathcal T(G_t)$, and hence $W_t$, with rooted tree $T_t$ are upward-restricted.
\end{observation}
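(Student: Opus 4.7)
The plan is to exhibit, for every $v \in V(T_t)$, an explicit set $X_v \subseteq \anc(v)$ of size at~most~$t+1$ that contains every non-layer neighbor of any descendant of $v$ that itself lies outside $\desc(v)$. Concretely, I would set $X_v := N^\uparrow[\parent(v)]$ whenever $v$ is not the root of $T_t$, and $X_v := \emptyset$ for the root (for which $\desc(v) = V(T_t)$ makes the condition vacuous). By \cref{lem:upward-degree}, $|X_v| \leqslant t+1$, yielding $(t+1)$-upward-restriction.

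The whole argument rests on the monotonicity statement
\[
  N^\uparrow[u] \;\subseteq\; N^\uparrow[\parent(u)] \cup \{u\} \qquad \text{for every non-root } u \in V(T_t).
\]
This is immediate from the construction: when $u$ was introduced as $v_{B,R}$ with $\parent(u) \in L_i$ and $u \in L_{i+1}$, the sets $B$ and $R$ were chosen as disjoint subsets of $N^\uparrow[\parent(u)]$, and the non-layer neighbors of $u$ in $L_{\leqslant i}$ are exactly $B \cup R$. A short induction on the level of the bottom endpoint also yields, along the way, that every non-layer edge of $\mathcal T(G_t)$ runs between an ancestor-descendant pair in $T_t$. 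Consequently, a non-layer neighbor of a descendant $w$ of $v$ that falls outside $\desc(v)$ is a strict ancestor of $v$.

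To conclude, I would iterate the monotonicity along the $T_t$-path $v = p_0, p_1, \ldots, p_k = w$ from $v$ down to any descendant $w$ of $v$, obtaining
\[
  N^\uparrow[w] \;\subseteq\; N^\uparrow[\parent(v)] \cup \{p_0, p_1, \ldots, p_k\}.
\]
Since $\{p_0, \ldots, p_k\} \subseteq \desc(v)$, every element of $N^\uparrow[w] \setminus \desc(v)$ lies in $N^\uparrow[\parent(v)] = X_v$. Hence every edge of $\mathcal T(G_t) - E_L$ with exactly one endpoint in $\desc(v)$ has its outside endpoint in $X_v$. The same $X_v$'s witness the upward-restrictedness of $W_t$ as well, since $E(W_t) \setminus E_L \subseteq E(\mathcal T(G_t)) \setminus E_L$.

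There is no real obstacle here: the key insight is the monotonicity $N^\uparrow[u] \subseteq N^\uparrow[\parent(u)] \cup \{u\}$, which falls straight out of the recipe used to build $G_t$. The only minor care-point is the short induction verifying the treedepth property, which is what legitimizes the iteration step.
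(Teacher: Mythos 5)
Your proposal is correct and takes essentially the same route as the paper, which simply declares $X_u := N^\uparrow[u]$ as the witnessing set (the paper's one-line proof leaves the monotonicity $N^\uparrow[w] \subseteq N^\uparrow[\parent(w)] \cup \{w\}$ and its iteration implicit); your choice $X_v := N^\uparrow[\parent(v)]$ is a cosmetic variant that works for the same reason.
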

\begin{proof}
  For every $u \in V(G_t)$, the required set $X_u$ is simply $N^\uparrow[u]$ (with $|X_u| \leqslant t+1$). 
\end{proof}

\begin{observation}\label{obs:Ct-clique-number}
  Every graph of $\mathcal C_t$ has clique number at~most~$t+1$.
\end{observation}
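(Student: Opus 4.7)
The plan is to fix any clique $K$ in $W_t$ and show $|K| \leqslant t+1$ by analysing its intersection with the deepest layer it meets. Since every layer induces a path in $W_t$, we have $|K \cap L_j| \leqslant 2$ for each $j$. Let $L_i$ be the deepest layer with $K \cap L_i \neq \emptyset$, and pick $v = v_{B_v, R_v} \in K \cap L_i$. By the construction, the non-layer edges of $W_t$ incident to $v$ and going upward reach exactly the vertices of $B_v$, so any vertex of $K$ lying in $L_{<i}$ must belong to $B_v$. If $|K \cap L_i| = 1$, we already have $K \subseteq \{v\} \cup B_v$, and hence $|K| \leqslant 1 + |B_v| \leqslant t+1$.

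Now assume $|K \cap L_i| = 2$, say $K \cap L_i = \{v, v'\}$ with $vv'$ a layer edge. Every vertex of $K \setminus \{v,v'\}$ lies in $L_{<i}$ and is adjacent in $W_t$ to both $v$ and $v'$, so $K \setminus \{v,v'\} \subseteq B_v \cap B_{v'}$; it thus suffices to show $|B_v \cap B_{v'}| \leqslant t-1$. Suppose, for contradiction, $|B_v \cap B_{v'}| = t$. Since $|B_v|, |B_{v'}| \leqslant t$, this forces $B_v = B_{v'}$, and the constraint $|B_v \cup R_v| \leqslant t$ together with $B_v \cap R_v = \emptyset$ then forces $R_v = R_{v'} = \emptyset$. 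So $v$ and $v'$ carry the same label $(B_v, \emptyset)$. If they were children of the same parent in $T_t$, the construction would identify them; hence they have distinct parents $u \neq u'$ in $L_{i-1}$, and the layer-adjacency of $v, v'$ forces $u, u'$ themselves to be layer-adjacent, since consecutive blocks of children in $L_i$ come from consecutive vertices of $L_{i-1}$.

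The final step, which I expect to be the main technical point, rules out this different-parents configuration. Layer-adjacency of $v, v'$ places each of them at the boundary of its parent's children-block in $L_i$: one is the last child appended to its parent and the other is the first. Using the iteration convention that puts the empty pair $(\emptyset,\emptyset)$ first, the first child appended to any node is $v_{\emptyset, \emptyset}$, which has empty $B$; as $|B_v| = |B_{v'}| = t \geqslant 1$, neither of $v, v'$ can be that first child, contradicting their boundary position. We conclude $|B_v \cap B_{v'}| \leqslant t-1$, and therefore $|K| \leqslant 2 + (t-1) = t+1$, completing the argument.
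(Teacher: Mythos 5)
Your overall approach (inspect the deepest layer a clique $K$ meets, use the path structure of a layer to see that $|K\cap L_i|\leqslant 2$, then bound the remaining vertices via the $B$-sets) is the same as the paper's. The paper's proof is shorter: it picks a deepest vertex $u$, notes that everything in $K$ not in $N^\uparrow[u]$ is at most one layer-mate $u'$, then argues that $|K\setminus\{u,u'\}|\geqslant t$ would force $u$ and $u'$ to have identical adjacency toward their common ancestors, contradicting the distinctness of the $(B,R)$ labels of siblings.

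Where you diverge usefully is in noticing what the paper sweeps under the rug. The paper simply calls the layer-mate $u'$ ``a sibling of $u$,'' but a layer edge can also join the last child of one parent $p$ with the first child of a neighbouring parent $p'$, and in that case the sibling condition in the construction does not apply to $u$ and $u'$. You correctly flag this as the nontrivial case and try to close it. However, your fix has a genuine gap: you invoke ``the iteration convention that puts the empty pair $(\emptyset,\emptyset)$ first,'' but the paper's construction only says ``for every ordered pair $(B,R)$ \dots append a vertex $v_{B,R}$'' without fixing any order of enumeration, so this convention is not established. The argument is salvageable --- for instance one can fix an enumeration that begins (or ends) with $(\emptyset,\emptyset)$, and your contradiction then goes through because the boundary vertex with parent $p'$ would have $B=\emptyset\neq B_v$; alternatively, one can observe directly that $B_v=B_{v'}$ of size $t$ forces $B_v=N^\uparrow[p]\setminus\{p\}=N^\uparrow[p']\setminus\{p'\}$ and continue from there, but this still requires pinning down something the construction leaves open. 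So: same approach as the paper, a sharper eye for the edge case than the paper itself, but the resolution relies on an unproved normalisation of the construction. To make it airtight you should either explicitly fix the enumeration order as part of the construction, or give an argument that is robust to any enumeration order.
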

\begin{proof}
  Let $K$ be a~(finite) clique in $W_t$, and let $u \in K$ be a~deepest vertex of~$K$, i.e., no vertex of $K$ is in a~layer of strictly larger index.
  Thus $K \setminus N^\uparrow[u]$ is empty or consists of exactly one sibling $u'$ of~$u$.
  In the former case, we are done, by~\cref{lem:upward-degree}.
  In the latter case, $|K \setminus \{u, u'\}| \leqslant t-1$, as otherwise $u$ and $u'$ would have the exact same neighbors and non-neighbors among their (common) ancestors, which is ruled out by our construction.
\end{proof}

The layered wheels $\mathcal T(G_t)$ and $W_t$, with rooted tree $T_t$, are $3^{t+1}$-bounded.

\begin{observation}\label{obs:degree-Tt}
  Every node of~$T_t$ has fewer than~$3^{t+1}$ children.
\end{observation}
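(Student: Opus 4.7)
The plan is to prove the bound by a direct counting argument on the defining data of children in the construction. By definition, every child of $u$ in $T_t$ is of the form $v_{B,R}$ for some ordered pair $(B,R)$ of disjoint subsets of $N^\uparrow[u]$ satisfying $|B \cup R| \leqslant t$, and distinct pairs yield distinct children. Hence it suffices to upper bound the number of such ordered pairs $(B,R)$.

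First I would invoke \cref{lem:upward-degree} to record that $|N^\uparrow[u]| \leqslant t+1$. Then the key observation is that for any finite set $S$, the number of ordered pairs $(B,R)$ of disjoint subsets of $S$ is exactly $3^{|S|}$, since each element of $S$ independently lies in $B$, in $R$, or in neither. Applied to $S = N^\uparrow[u]$, this already yields the non-strict bound $3^{t+1}$ on the number of children; the remaining task is to sharpen it to a strict inequality.

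For strictness, I would split into two cases according to $|N^\uparrow[u]|$. If $|N^\uparrow[u]| \leqslant t$, then the total number of ordered pairs of disjoint subsets is at most $3^t < 3^{t+1}$, and the constraint $|B \cup R| \leqslant t$ is automatic. If $|N^\uparrow[u]| = t+1$, then the constraint $|B \cup R| \leqslant t$ excludes exactly the ordered pairs with $B \cup R = N^\uparrow[u]$, and there are $2^{t+1}$ such pairs (one per subset of $N^\uparrow[u]$ deciding which elements go to~$B$, the rest to~$R$). Thus the number of children is at most $3^{t+1} - 2^{t+1} < 3^{t+1}$, as desired.

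There is no real obstacle: the argument is a one-line application of the $3^{|S|}$ formula together with \cref{lem:upward-degree}, with a minor case split to get strict inequality. The only thing to be careful about is making sure distinct pairs $(B,R)$ really yield distinct children in $T_t$, which is immediate from the construction since $v_{B,R}$ is appended once per ordered pair.
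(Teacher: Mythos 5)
Your proof is correct and takes essentially the same approach as the paper: both count ordered pairs $(B,R)$ of disjoint subsets of $N^\uparrow[u]$ with $|B \cup R| \leqslant t$ using the $3^{|S|}$ identity, and both obtain the strict bound via the observation that either $|N^\uparrow[u]| \leqslant t$ (giving at most $3^t$) or $|N^\uparrow[u]| = t+1$, in which case the $2^{t+1}$ pairs with $B \cup R = N^\uparrow[u]$ are excluded (giving at most $3^{t+1}-2^{t+1}$). The paper expresses this compactly as $\max(3^{t+1}-2^{t+1}, 3^t) < 3^{t+1}$; your explicit case split spells out the same calculation.
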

\begin{proof}
  Indeed the number of ordered pairs $(B,R)$ such that $B$ and $R$ are disjoint subsets of a~set of size at~most~$t+1$ with $|B \cup R| \leqslant t$ is at~most~$\max(3^{t+1}-2^{t+1},3^t) < 3^{t+1}$. 
\end{proof}

A~\emph{downward path} from $u \in V(G_t)=V(T_t)$ is an infinite path $u_1 u_2 \dots$ in $T_t$ such that $u_1=u$ and for every $i>1$, $u_{i-1}$ is the parent of $u_i$ in $T_t$ (note that a~downward path is not necessarily a path in~$G_t$).
The~\emph{upward path} from $u \in V(G_t)$ is the unique path in $T_t$ from $u$ to the root of~$T_t$.
Observe that the union of the upward path from $u$ and any downward path from $u$ contains exactly one vertex in each layer: it is a~branch of $T_t$.

We now show the main technical lemma of this section: The finite $H$-free induced subgraphs of $G_t$ satisfy the bounded-branch property for every chordal trigraph~$H$ whose total graph has clique number at~most~$t+1$.

\begin{lemma}[Bounded-branch property]\label{lem:bounded-ray-tw}
  For every chordal trigraph $H$ with $\mathcal T(H)$ of clique number at~most~$t+1$, every set $X \subseteq V(G_t)$ such that $G_t[X]$ is $H$-free, and every $u \in V(G_t)$, there exists a~downward path $P_u$ from~$u$ satisfying $$|V(P_u) \cap X| \leqslant |V(H)|-1.$$
\end{lemma}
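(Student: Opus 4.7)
We prove the contrapositive by induction on $|V(H)|$: assuming every downward path from $u$ meets $X$ in at least $|V(H)|$ vertices, we construct an induced subtrigraph of $G_t[X]$ isomorphic to $H$, contradicting the $H$-freeness hypothesis. When $|V(H)| \leq 1$, the assumption provides an $X$-vertex below $u$ that realizes $H$ on its own.

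For the inductive step, apply \cref{lem:tr} to obtain a tree representation $T_H$ of $H$ and enumerate $V(H) = \{h_1,\dots,h_k\}$ in a DFS preorder of $T_H$ starting from its root. We inductively construct an injection $\varphi \colon V(H) \to X$ satisfying: (a) $T_H$-ancestors map to $T_t$-ancestors; (b) $T_H$-incomparable pairs map to $T_t$-incomparable pairs that are not joined by any layer edge of $G_t$; and (c) for every $T_H$-ancestor--descendant pair $h_i,h_j$, the adjacency type of $\varphi(h_i)\varphi(h_j)$ in $G_t$ matches that of $h_ih_j$ in $H$. One checks that the only $G_t$-edges are layer edges (between consecutive vertices of a common layer) or cross-layer edges between $T_t$-ancestor--descendant pairs, so conditions (a)--(c), together with tree-representation Condition \ref{tr:2} (which forces $T_H$-incomparable pairs to be $H$-non-edges), make $\varphi$ an induced-subtrigraph embedding of $H$.

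The engine of the construction is the flexibility of $G_t$: at every $v \in T_t$ and for every choice of disjoint $B,R \subseteq N^\uparrow[v]$ with $|B \cup R| \leq t$, there is a child $v_{B,R}$ realizing exactly this adjacency pattern to the vertices of $N^\uparrow[v]$. To extend $\varphi$ from $h_m$ to $h_{m+1}$ with $T_H$-parent $h_p$, we seek an $X$-descendant of $\varphi(h_p)$ in a subtree disjoint from the previous images, whose ancestor-adjacencies realize the pattern of $h_{m+1}$ toward its $T_H$-ancestors in $H$. By Condition \ref{tr:1} and the clique-number bound on $\mathcal T(H)$, this pattern has size at most $t$, so it fits the $|B \cup R| \leq t$ budget, and we can descend step by step, at each $T_t$-node taking the child that preserves the required cumulative ancestor-adjacency, until we reach a vertex of $X$.

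The main obstacle is to guarantee that such an $X$-vertex eventually appears along a fresh branch. This is handled by a budget argument: after $m$ extensions, every downward path from the current frontier below $\varphi(h_p)$ still meets $X$ in at least $|V(H)| - m$ vertices. The invariant is seeded by the global hypothesis at $u$ and is preserved at each step, since each extension consumes exactly one $X$-vertex on the used path, while sibling subtrees of $T_H$ are routed through distinct children of their common image-ancestor in $T_t$, keeping their budgets separate. A minor additional bookkeeping avoids placing two $T_H$-incomparable images as consecutive vertices of the same layer (by inserting a few buffer descent steps when needed, which is always possible thanks to the flexibility of $G_t$). Since only $|V(H)|$ extensions are ever performed, the supply never runs out, $\varphi$ is completed, and we obtain the desired contradiction.
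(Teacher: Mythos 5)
Your proposal follows essentially the same route as the paper's proof: both fix a tree representation $T_H$ of $H$, enumerate $V(H)$ in a leaf-elimination order (your DFS preorder is exactly this), and greedily descend in $T_t$ from the image of the $T_H$-parent, realizing the prescribed ancestor-adjacency pattern at each step and stopping at the first $X$-vertex encountered. The paper phrases this as an inductive disjunction $\mathcal P_k$ (``either a partial embedding $S_k$ exists, or a downward path from $u$ has at most $k-1$ vertices of $X$''), whereas you argue via the contrapositive and a running budget; these are logically equivalent, and the paper's invariant $X\cap V(S_k)=\{v_1',\dots,v_k'\}$ does the same job as your ``$|V(H)|-m$ budget below the frontier.''

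One point deserves more care than your sketch gives it. You explicitly flag the danger that two $T_H$-incomparable images land as consecutive vertices of the same layer, and hence receive a spurious black layer edge. Your proposed remedy — ``a few buffer descent steps when needed'' — is not immediately available: the new image $w$ is the \emph{first} $X$-vertex on the constrained descent path, so once $w$ is reached you cannot descend past it, and the problematic position of $w$ is only revealed at that moment. A correct fix must instead steer the whole descent in advance, exploiting the freedom in the adjacency bits of $Q$ toward \emph{non-image} vertices of the relevant $N^\uparrow$ sets, or strengthen the inductive invariant (as the paper's third bullet in Disjunct~(1) does for a related purpose) so as to keep distinct branches separated in the layer ordering. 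The paper's own step ``In particular, $G_t[\{v_1',\dots,v_k',w\}]$ is isomorphic to $H[\{v_1,\dots,v_{k+1}\}]$'' also quietly assumes no such layer-edge collision, so your observation is a good one — but the cure needs more than a buffer of descent steps after the fact.
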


\begin{proof}
  Let $T_H$ be a~tree representation of $H$, as defined in \cref{sec:trigraphs}.
  We number $v_1, \ldots, v_h$ the vertices of trigraph~$H$ such that for all $i \in [h]$, $v_i$ is a~leaf of $T_H[\{v_1, \ldots, v_i\}]$.
  Fix any set $X \subseteq V(G_t)$ such that $G_t[X]$ is $H$-free, and any $u \in V(G_t)$.

  Let us prove for every integer $k \in [h]$, the following disjunction, which we call $\mathcal P_k$.
  \begin{compactenum}
  \item \label{disj:1} There exists a subtree $S_k$ of $T_t$, rooted at $u$, such that all the following facts hold:
    \begin{compactitem}
    \item $G_t[X \cap V(S_k)]$ is isomorphic to $H[\{v_1, \dots, v_k\}]$,
    \item denoting by $v'_i$ the image of $v_i$ in the isomorphism from $H[\{v_1, \dots, v_k\}]$ to $G_t[X \cap V(S_k)]$, for any pair of vertices $v_i, v_j$ with $i, j \in [k]$, $v_i$ is an ancestor of $v_j$ in $T_H$ if and only if $v'_i$ is an ancestor of $v'_j$ in $S_k$, and
    \item for any pair of vertices $v_i'$ and $v_j'$ such that $v_i$ is the parent of $v_j$ in $T_H$, if $x \neq v_i'$ is on the path from $v_i'$ to $v_j'$ in $S_k$, then for every $w \in \anc_{T_t}(x) \cap \{v'_1, \dots, v'_k\}$, $xw$ and $v_j'w$ have the same adjacency type in~$H$,
    \end{compactitem}
  \item \label{disj:2} or there exists a downward path $P_u$ from~$u$ such that $|V(P_u) \cap X| \leqslant k-1$.
  \end{compactenum}
  \medskip
  The lemma indeed follows because only the second outcome of $\mathcal P_h$ can hold since $G_t[X]$ is $H$-free.
  We now prove $\mathcal P_k$ by induction on $k$.

  Consider the base case $k=1$.
  If there is some descendant $v \in X$ of $u$ in $T_t$, then add a~shortest path from $u$ to such a~vertex $v$.
  This path is the tree $S_1$ and set $v'_1:=v$, which satisfies Disjunct~\ref{disj:1}.
  Otherwise, $u$ has no descendant in $X$, and any downward path $P_u$ from $u$ satisfies Disjunct~\ref{disj:2}.

  Suppose now that $\mathcal P_k$ holds for some $k \in [h-1]$.
  Let us prove $\mathcal P_{k+1}$.
  If Disjunct~\ref{disj:2} of~$\mathcal P_k$ holds, then the same downward path~$P_u$ from~$u$ satisfies Disjunct~\ref{disj:2} of~$\mathcal P_{k+1}$. 
  We thus assume that Disjunct~\ref{disj:1} of~$\mathcal P_k$ holds.
  Since $G_t[X \cap V(S_k)]$ is isomorphic to $H[\{v_1, \dots, v_k\}]$, some vertex $v'_i \in X \cap V(S_k)$ is such that $v_i$ is the parent of $v_{k+1}$ in $T_H$ with $i \in [k]$.

  From the definition of tree representations, for every $v \in V(H)$, $N_{\mathcal T(H)}(v) \cap \anc_{T_H}(v)$ is a clique in $\mathcal T(H)$, and in particular has size at most $t$ (indeed $v$ is fully adjacent to this clique in $\mathcal T(H)$).
  By construction of $G_t$, there is a~downward path $Q$ in $T_t$ from $v'_i$ such that for every $x \in V(Q) \setminus \{v'_i\}$ and for every $j \in [k]$ with $v_j \in \anc_{T_H}(v_{k + 1})$, $xv'_j$ (in~$G_t$) and $v_{k + 1}v_j$ (in~$H$) have the same adjacency type.
  In particular, for any vertex $w$ of $V(Q) \setminus \{v'_i\}$, the trigraph $G_t[\{v'_1, \dots, v'_k, w\}]$ is isomorphic to $H[\{v_1, \dots, v_{k+1}\}]$.
  Moreover, $V(Q) \setminus \{v'_i\}$ does not contain any $v_j'$ with $j \in [k]$, because of the sibling condition in $T_H$, together with the third condition of Disjunct~\ref{disj:2}.

  If $V(Q) \setminus \{v'_i\}$ contains no vertex of $X$, then we build the downward path $P_u$ by taking the union of $Q$ and the (unique) path from $u$ to $v'_i$ in~$S_k$.
  Vertices from $V(P_u) \cap X$ are all in~$S_k$, so $V(P_u) \cap X$ contains at most $k$ vertices.  

  Otherwise, $V(Q) \setminus \{v'_i\}$ contains some vertex in $X$.
  Let $Q'$ be the shortest subpath of $Q$ from $v'_i$ to some vertex $w \in X \setminus \{v'_i\}$.
  We set $v'_{k + 1} := w$, and we build the tree $S_{k+1}$ by adding $Q'$ to~$S_k$.
  One can finally observe that $S_{k + 1}$ satisfies the three conditions of~Disjunct~\ref{disj:2}.
\end{proof}

We recall that $W_t$ is the real graph of~$G_t$, and $\mathcal C_t$, the class of finite induced subgraphs of~$W_t$.
It is easy to see that $\mathcal C_t$ has unbounded treewidth.

\begin{observation}\label{obs:tw-of-Ct}
  $W_t[L_{\leqslant i}]$ has treewidth at~least~$i$.
  In particular, $\mathcal C_t$ has unbounded treewidth.
\end{observation}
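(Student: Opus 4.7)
The plan is to exhibit $K_{i+1}$ as a~minor of $W_t[L_{\leqslant i}]$. Since treewidth is minor-monotone and $\tw(K_{i+1}) = i$, this immediately yields $\tw(W_t[L_{\leqslant i}]) \geqslant i$; and since every layer is finite by construction, $W_t[L_{\leqslant i}] \in \mathcal C_t$, so $\mathcal C_t$ has unbounded treewidth. This specializes to the present construction the general observation recalled just after the definition of layered wheels, namely that any \emph{proper} layered wheel has unbounded treewidth via layer contraction.

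For the minor model I would take the layers $L_0, L_1, \ldots, L_i$ themselves as the branch sets. Each $L_j$ induces a~path of real (black) edges by construction, hence is connected in~$W_t$, so each branch set is connected. The only non-trivial point is the following claim: for every pair $0 \leqslant j < k \leqslant i$ there exists a~real edge of $W_t$ with one endpoint in~$L_j$ and the other in~$L_k$.

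I would prove this claim by fixing an arbitrary $u \in L_j$ and constructing, by induction on~$k$, a~vertex $u_k \in L_k$ with $uu_k \in E_B(G_t) = E(W_t)$. For the base case $k = j+1$, the ordered pair $(B,R) = (\{u\},\emptyset)$ is admissible in the construction of $\children(u)$ (since $u \in N^\uparrow[u]$ and $|B \cup R| = 1 \leqslant t$), producing a~vertex $v_{\{u\},\emptyset} \in L_{j+1}$ joined to~$u$ by a~black edge; set $u_{j+1} := v_{\{u\},\emptyset}$. For the inductive step, assume $uu_\ell$ is a~black edge with $u_\ell \in L_\ell$. Then $u \in N_{\mathcal T(G_t)}(u_\ell) \cap L_{\leqslant \ell - 1}$, hence $u \in N^\uparrow[u_\ell]$, and the pair $(B,R) = (\{u\},\emptyset)$ is once again admissible when $u_\ell$'s children are generated, creating a~child $u_{\ell+1}$ of~$u_\ell$ in $L_{\ell+1}$ that is black-adjacent to~$u$.

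I do not foresee any real obstacle. The one subtle point to keep in mind is that, unlike in the classical setting, the parent-child edges of $T_t$ are \emph{not} a~priori edges of $W_t$ (the tree $T_t$ is not a~subgraph of $\mathcal T(G_t)$), so one cannot simply follow a~root-to-leaf branch; instead, one must maintain through the induction that the \emph{newly} introduced black edge $u u_{\ell+1}$ keeps $u$ inside $N^\uparrow[u_{\ell+1}]$, which is exactly what the argument above does. Combined with the connectedness of each layer, this gives the $K_{i+1}$-minor model and concludes.
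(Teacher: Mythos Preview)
Your proof is correct and rests on the same structural observation as the paper's: the layers $L_0,\ldots,L_i$ are pairwise disjoint, connected, and mutually adjacent in~$W_t$. The paper packages this as a~bramble of order~$i+1$ (invoking $\tw=\bn-1$) rather than a~$K_{i+1}$-minor model, but the content is identical, and you additionally spell out the inter-layer black edge that the paper's one-line proof leaves implicit.
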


\begin{proof}
  The $i+1$ pairwise disjoint connected sets $L_0, L_1, \ldots, L_i$ form a~bramble in $W_t[L_{\leqslant i}]$.
  Hence the treewidth of $W_t[L_{\leqslant i}]$ is at~least~$(i+1)-1=i$.
\end{proof}

We will now prove that for any (finite) $H$ of treewidth at~most~$t$, the finite $H$-free induced subgraphs of $W_t$ have treewidth upper bounded by a~function of~$t$ and $|V(H)|$.   
For that, we use the functional equivalence of treewidth and separation number, and we will rely on the following separator.

\begin{observation}\label{obs:separator}
  For every $u \in V(G_t)$, let $u^-$ be the vertex to the immediate left (in their layer) of the leftmost child of~$u$, and $u^+$, the vertex to the immediate right of the rightmost child of~$u$.
  For any downward paths $P_{u^-},P_{u^+}$ from $u^-, u^+$, respectively, the set
  \[S := N^{\uparrow}[u] \cup N^{\uparrow}[u^-] \cup N^{\uparrow}[u^+] \cup P_{u^-} \cup P_{u^+}\]
  separates in $\mathcal T(G_t)$, and hence in $W_t$, the descendants of $u$ (as well as some descendants of~$u^-$ and $u^+$) from the rest of the graph.
  (If one $u'$ of $\{u^-, u^+\}$ is not defined, simply remove $N^{\uparrow}[u'] \cup P_{u'}$ from the definition of~$S$.)
\end{observation}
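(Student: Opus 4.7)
The plan is to exhibit a set $T$ with $\desc(u)\setminus\{u\}\subseteq T$ containing in addition some descendants of $u^-$ and $u^+$, such that no edge of $\mathcal T(G_t)-S$ crosses from $T\setminus S$ to $V(\mathcal T(G_t))\setminus(T\cup S)$. Two preliminary ingredients drive everything. First, by construction, the non-layer neighbors of $v_{B,R}$ are exactly $B\cup R\subseteq N^\uparrow[\parent(v_{B,R})]$; induction on depth then gives that whenever $v\in\desc(x)\setminus\{x\}$ for $x\in\{u,u^-,u^+\}$, every non-layer neighbor of $v$ either lies on the $T_t$-path from $x$ to $v$ (and hence in $\desc(x)$) or belongs to $N^\uparrow[x]\setminus\{x\}\subseteq S$. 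Second, by the planar embedding of $T_t$, the sets $\desc(u^-)\cap L_{i+k}$, $\desc(u)\cap L_{i+k}$, and $\desc(u^+)\cap L_{i+k}$ appear as three consecutive intervals of $L_{i+k}$ in this left-to-right order, for every $k\geq 1$.

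Let $p_k$ (resp.~$q_k$) be the unique vertex of $P_{u^-}\cap L_{i+k}$ (resp.~$P_{u^+}\cap L_{i+k}$). I would define $T$ as the union of $\desc(u)$, the descendants of $u^-$ lying strictly to the right of $p_k$ in their layer, and the descendants of $u^+$ lying strictly to the left of $q_k$. Equivalently, by planarity, the second piece consists of the connected components of $T_t[\desc(u^-)]-P_{u^-}$ whose root is a right sibling of the next $P_{u^-}$-vertex, and symmetrically for the third piece.

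It then remains to check that every edge with one endpoint $v\in T\setminus S$ has its other endpoint in $T\cup S$. For non-layer edges, the first ingredient gives that the other endpoint is either in $S$ or on the $T_t$-path from $x$ to $v$ (with $x$ the relevant element of $\{u,u^-,u^+\}$); this path lies in $T\cup S$ (trivially when $x=u$, and for $x=u^-$ because it follows $P_{u^-}$ until it branches into the right component containing $v$, similarly for $u^+$). For layer edges, three cases arise. If $v\in\desc(u)\cap L_{i+k}$, only the two extremes of that interval can have layer neighbors outside $\desc(u)$, and such a neighbor is either $p_k$ or $q_k$ (in $S$) or strictly to the right of $p_k$, respectively to the left of $q_k$ (in $T$). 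If $v$ lies in the right part of $\desc(u^-)$, then $v$ sits at a position strictly right of $p_k$ in its layer, so each layer neighbor is either $p_k$ (in $S$) or strictly right of $p_k$ (in $T$, being either another right-of-$p_k$ vertex of $\desc(u^-)$ or a vertex of $\desc(u)\cap L_{i+k}$). The $u^+$-side is symmetric.

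The main obstacle is the planar bookkeeping, namely verifying that ``strictly right of $p_k$ in layer $L_{i+k}$'' agrees with ``in a right-sibling component of $T_t[\desc(u^-)]-P_{u^-}$'', and similarly on the $u^+$-side. This follows from the consistent planar embedding of the subtrees across layers, but must be handled carefully since $P_{u^-}$ can be any downward path, not only the leftmost- or rightmost-descent.
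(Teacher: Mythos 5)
Your approach is correct and in essence matches the paper's: identify the ``inner'' piece explicitly (your set $T$, namely $\desc(u)$ together with the descendants of $u^-$ to the right of $P_{u^-}$ and of $u^+$ to the left of $P_{u^+}$), then check that every edge leaving it in $\mathcal T(G_t)$ lands in $S$, using the upward-restriction structure for non-layer edges and the planar interval structure for layer edges. You spell out the planar bookkeeping that the paper's proof leaves implicit, which is a genuine service: the paper just says ``let $D$ be the connected component of $\mathcal T(G_t)-S$ containing the descendants of $u$'' and asserts that edges both of whose endpoints are descendants of $u^-$ (or $u^+$) must hit $P_{u^-}\cup P_{u^+}$, whereas you make precise which descendants of $u^-,u^+$ belong to the separated piece and why.

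One small imprecision to repair: your first ingredient states that ``the non-layer neighbors of $v_{B,R}$ are exactly $B\cup R$,'' which is false once later layers are built (a vertex also acquires non-layer neighbors among its own descendants). You should restrict the claim to non-layer neighbors in \emph{earlier} layers, which is what the construction directly gives and what the induction on depth needs. The conclusion you draw from it still holds once rephrased, because for a non-layer edge one may always argue from its deeper endpoint, and because the descendants of any $v\in T$ remain in $T$ (by the same planarity argument you already invoke for the right-of-$P_{u^-}$ / left-of-$P_{u^+}$ components). Alternatively, you could invoke \cref{obs:upward-restricted} directly, as the paper does, rather than rederiving it from the $v_{B,R}$ description.
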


\begin{proof}
  By~\cref{obs:upward-restricted}, $\mathcal T(G_t)$, and hence $W_t$, are upward-restricted.
  Therefore, for any $u' \in \{u, u^-, u^+\}$ and any descendant $w$ of $u'$,
  \begin{equation}\label{eq:inclusion}
    N^{\uparrow}[w] \cap \anc_{T_t}(u') \subseteq N^{\uparrow}[u'] \subseteq S.
  \end{equation} 

    Let $D$ be the connected component of $\mathcal T(G_t) - S$ containing the descendants of~$u$. 
    Let $xy$ be an edge with exactly one endpoint in~$D$.
    If $x$ and~$y$ are both descendants of $u^-$ or of~$u^+$, then one of $x$ and~$y$ is in $P_{u^-} \cup P_{u^+} \subseteq S$.
    Otherwise exactly one of $x, y$ is a~descendant of $u$, $u^-$, or~$u^+$, and the other is in $S$ by~Inclusion (\ref{eq:inclusion}).
\end{proof}

The next lemma locates an appropriate node~$u$ on which to apply~\cref{obs:separator}.

\begin{lemma}\label{lem:balanced-vertex}
  For every $n \geq 8$ and every $n$-vertex induced subgraph $G$ of $\mathcal T(G_t)$ (hence of~$W_t$), there is some $u$ of $T_t$ such that there are at least $\frac{n}{8 \cdot 3^{t+1}}$ and at most $\frac{n}{4}$ descendants of $u$ that are in~$V(G)$, and for every node $v \in V(T_t)$ on the layer of $u$, $v$ also has at~most $\frac{n}{4}$ descendants in~$V(G)$.
\end{lemma}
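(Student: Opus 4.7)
The plan is to use a greedy top-down walk on $T_t$, controlling the count of $V(G)$-vertices among descendants. For each layer index $i$, let $d_i := \max_{v \in L_i} |\desc(v) \cap V(G)|$. The first observation I would establish is that $(d_i)_{i \geqslant 0}$ is non-increasing: any node $w \in L_{i+1}$ has its parent $p \in L_i$ with $\desc(w) \subseteq \desc(p)$, so $|\desc(w) \cap V(G)| \leqslant d_i$, hence $d_{i+1} \leqslant d_i$. Also, the root belongs to $L_0$ and its descendant set is all of $V(G_t)$, so $d_0 = n$.

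Next I would let $i^\star$ be the smallest index with $d_{i^\star} \leqslant n/4$. Since $d_0 = n > n/4$, we have $i^\star \geqslant 1$, and by minimality $d_{i^\star - 1} > n/4$. Pick $v^\star \in L_{i^\star - 1}$ achieving the maximum $d_{i^\star - 1}$. Since the descendants of $v^\star$ partition into $\{v^\star\}$ and the disjoint union of descendant sets of its children, one gets
\[
\sum_{w \text{ child of } v^\star} |\desc(w) \cap V(G)| \;\geqslant\; d_{i^\star - 1} - 1 \;>\; \frac{n}{4} - 1 \;\geqslant\; \frac{n}{8},
\]
where the last inequality uses the hypothesis $n \geqslant 8$.

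Finally, I would invoke \cref{obs:degree-Tt}, which says $v^\star$ has fewer than $3^{t+1}$ children. By averaging, some child $u$ of $v^\star$ satisfies $|\desc(u) \cap V(G)| \geqslant \frac{n/8}{3^{t+1}} = \frac{n}{8 \cdot 3^{t+1}}$. This $u$ lies in $L_{i^\star}$, so by definition of $d_{i^\star}$, every vertex $v \in L_{i^\star}$ (including $u$) satisfies $|\desc(v) \cap V(G)| \leqslant d_{i^\star} \leqslant n/4$. This $u$ simultaneously meets all three conditions required by the lemma.

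There is no serious obstacle here; the only subtle point is making sure the $(d_i)$-sequence is genuinely non-increasing (which uses that a child's descendant set sits inside its parent's) and that the $-1$ coming from possibly counting $v^\star$ itself is absorbed by the gap between $n/4$ and $n/8$, which is exactly what the assumption $n \geqslant 8$ provides.
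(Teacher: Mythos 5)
Your proof is correct, and it is a tighter version of the argument the paper itself gives. The paper also performs a greedy top-down walk (move from the root to a child with the largest number of $V(G)$-descendants, stopping when the current node has at most $n/4$ of them) and uses the same averaging step over the fewer than $3^{t+1}$ children of the last node with more than $n/4$ descendants, so the lower-bound arithmetic $\left(\frac{n}{4}-1\right)/3^{t+1}\ge\frac{n}{8\cdot3^{t+1}}$ is identical. Where you differ is the stopping criterion: the paper stops as soon as the \emph{walked} node drops to at most $n/4$ descendants and then asserts that the node reached satisfies all the required properties, but as written this does not obviously guarantee the third condition, since a node in the same layer that lies off the walk (say in the subtree of an unvisited sibling of an earlier $w_j$) could still have more than $n/4$ descendants in $V(G)$. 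By tracking the layer-wise maximum $d_i$ and stopping at the first index $i^\star$ with $d_{i^\star}\le n/4$, you make the third condition hold by definition, while the minimality of $i^\star$ gives $d_{i^\star-1}>n/4$ and hence the node $v^\star$ on which to average. In short, you reformulate the same walk so that the third bullet follows for free; this is the cleanest way to state what the paper's proof sketch is presumably intended to mean.
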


\begin{proof}
  We start from the root of~$T_t$, and move down along edges of~$T_t$ according to the following rule.
  If the current node has at~most~$n/4$ vertices of~$G$ among its descendants in~$T_t$, stop.
  Otherwise, move to a~child of the current node with a~largest number of descendants within $V(G)$, that is, at~least $\left(\frac{n}{4} - 1\right)/3^{t + 1} \geqslant \frac{n}{8 \cdot 3^{t+1}}$, by~\cref{obs:degree-Tt}.
  Thus, the node we stop at has the required property.
\end{proof}

Graph $W_t$ is a~spanning subgraph of~$\mathcal T(G_t)$.
In particular, for every $X \subseteq V(G_t) = V(W_t)$, we have $\tw(W_t[X]) \leqslant \tw(\mathcal T(G_t)[X])$.
We can now conclude.

 \begin{lemma}\label{lem:main}
   For every graph $H$ of treewidth at~most~$t$ and every finite $H$-free induced subgraph $G$ of $W_t$, the treewidth of $G$ is at~most $15 \cdot \frac{\log(2/3)}{\log \alpha} \cdot (3t+2|V(H)|+1)$ with $\alpha := 1-\frac{1}{8 \cdot 3^{t+1}}$.
 \end{lemma}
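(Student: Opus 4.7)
The plan is to apply \cref{lem:tw-sn-convert} with $s := 3t + 2|V(H)| + 1$ and $\alpha := 1 - \frac{1}{8 \cdot 3^{t+1}}$, by producing an $\alpha$-balanced separator of size at most $s$ inside every induced subgraph $G'$ of $G$. To invoke \cref{lem:bounded-ray-tw}, which expects a chordal trigraph rather than a plain graph, I first convert the hypothesis: since $\tw(H) \leq t$, fix a chordal completion $H^\star$ of $H$ of clique number at most $t+1$, and color the edges of $H$ black and those of $H^\star \setminus H$ red to obtain a chordal trigraph $\tilde H$ with $\mathcal T(\tilde H) = H^\star$. Any trigraph embedding of $\tilde H$ into $G_t[X]$ would map the black edges of $\tilde H$ (the edges of $H$) to edges of $W_t$, exhibiting $H$ as an induced subgraph of $W_t[X]$; hence, if $G'$ is $H$-free, then the trigraph $G_t[V(G')]$ is $\tilde H$-free.

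Fix such a $G'$ and set $n' := |V(G')|$. When $n' \leq 8 \cdot 3^{t+1}$, a single-vertex separator (or the empty set if $n' \leq 1$) is already $\alpha$-balanced, since each surviving component has size at most $n' - 1 \leq \alpha n'$, and its size is at most $1 \leq s$. When $n' \geq 8$, apply \cref{lem:balanced-vertex} to obtain a node $u \in V(T_t)$ such that $|\desc(u) \cap V(G')| \in [n'/(8 \cdot 3^{t+1}), n'/4]$ and such that every node on the layer of $u$ has at most $n'/4$ descendants in $V(G')$. Form $S := N^\uparrow[u] \cup N^\uparrow[u^-] \cup N^\uparrow[u^+] \cup V(P_{u^-}) \cup V(P_{u^+})$ as in \cref{obs:separator}, choosing $P_{u^-}$ and $P_{u^+}$ via \cref{lem:bounded-ray-tw} applied to the trigraph $\tilde H$ and $X := V(G')$, so that each of these paths meets $V(G')$ in at most $|V(H)| - 1$ vertices. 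Using $|N^\uparrow[v]| \leq t+1$ from \cref{lem:upward-degree}, we obtain $|S \cap V(G')| \leq 3(t+1) + 2(|V(H)| - 1) = s$.

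It remains to verify that $S \cap V(G')$ is $\alpha$-balanced. The key structural observation is that although $u^-$ and $u^+$ themselves lie one layer below $u$, their parents lie on $u$'s layer (the children of any node in $T_t$ occupy a contiguous run of their layer), so the second clause of \cref{lem:balanced-vertex} forces $|\desc(u^-) \cap V(G')|, |\desc(u^+) \cap V(G')| \leq n'/4$. By \cref{obs:separator}, every connected component of $G' - (S \cap V(G'))$ that contains a descendant of $u$ is contained in $\desc(u) \cup \desc(u^-) \cup \desc(u^+)$ — hence of size at most $3n'/4 \leq \alpha n'$ — while every other component is disjoint from $\desc(u) \cap V(G')$ — hence of size at most $n' - n'/(8 \cdot 3^{t+1}) = \alpha n'$. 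Applying \cref{lem:tw-sn-convert} then yields the announced treewidth bound. The delicate step is precisely this balance check: one has to recognize that $u^-$ and $u^+$ inherit their good descendant counts from their parents, not from themselves; the remainder is a matter of assembling already-proved tools.
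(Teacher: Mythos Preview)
Your proof is correct and follows essentially the same approach as the paper's: convert $H$ into a chordal trigraph via a chordal completion, locate a balanced node $u$ with \cref{lem:balanced-vertex}, assemble the separator $S$ of \cref{obs:separator} using downward paths supplied by \cref{lem:bounded-ray-tw}, check the balance, and finish with \cref{lem:tw-sn-convert}. The only cosmetic differences are your explicit handling of the small-$n'$ case and your remark that $u^-$ and $u^+$ inherit their descendant bounds from their parents on $u$'s layer; the paper simply notes that $u^-$ and $u^+$ lie below the layer of~$u$, which amounts to the same thing.
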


 \begin{proof}
   Let $X \subseteq V(W_t)$ be the finite set such that $G=W_t[X]$.
   Let $H'$ be a~chordal trigraph whose real graph is~$H$, and whose total graph is a~chordal graph of clique number at~most~$t+1$. 
   As $H$ is not an induced subgraph of~$W_t[X]$, $H'$ is not an induced subtrigraph of~$G_t[X]$.
   
 If $n := |V(G)| < 8$, we are done. Otherwise, let $u$ be the node of $T_t$ obtained by applying \cref{lem:balanced-vertex} to~$G$.
 Let $u^-$ be the vertex to the immediate left (in their layer) of the leftmost child of~$u$, and $u^+$, the vertex to the immediate right of the rightmost child of~$u$.
 Let $P_{u^-}, P_{u^+}$ be the downward paths given by~\cref{lem:bounded-ray-tw} for $H', X, u^-$ and $H', X, u^+$, respectively.
 Let \[S := (N^{\uparrow}[u] \cup N^{\uparrow}[u^-] \cup N^{\uparrow}[u^+] \cup P_{u^-} \cup P_{u^+}) \cap X.\]
 In the case when $u^-$ (resp.~$u^+$) does not exist, simply remove $P_{u^-} \cup N^{\uparrow}[u^-]$ (resp.~$P_{u^+} \cup N^{\uparrow}[u^+]$) from~$S$.
 We have \[|S| \leqslant 3(t+1) + 2(|V(H')|-1) = 3t+2|V(H)|+1\]
 by~\cref{lem:upward-degree} and~\cref{lem:bounded-ray-tw}.

 \begin{figure}[h!]
\centering
  \begin{tikzpicture}[
  vertex/.style={fill,circle,inner sep=0.05cm, minimum width=0.05cm},
  tree-edge/.style={dotted},
  edge/.style={very thick},
  tedge/.style={white, line width=0.6pt, dotted}
  ]
    \def\s{1}
    \def\t{12}
    \pgfmathtruncatemacro\tm{\t-1}
    \pgfmathtruncatemacro\td{\t/2}
    \pgfmathtruncatemacro\tdp{\td+1}

    % vertices
    \node[vertex, label=above:$u$] (u) at (0.5 * \t * \s + 0.5 * \s, 2 * \s) {} ;
    \foreach \i in {1,...,\t}{
      \node[vertex] (u\i) at (\i * \s, \s) {} ;
    }
    \foreach \i/\l in {1/$u^-$,\t/$u^+$}{
      \node[circle,inner sep=0.05cm, minimum width=0.05cm,label=above:\l]  at (\i * \s, \s) {} ;
    }

    % edges
    \foreach \i [count = \ip from 2] in {1,...,\tm}{
      \draw[edge] (u\i) -- (u\ip) ;
    }
    \foreach \i in {2,5,6,9}{
      \draw[edge] (u) -- (u\i) ;
      \draw[tedge] (u) -- (u\i) ;
    }
    \foreach \i in {3,4,7,8,10,11}{
      \draw[tree-edge] (u) -- (u\i) ;
    }

    % subtrees of u^- and u^+
 \begin{scope}[shift={(u1)}]
  \draw (1.6,-3) -- (0,0) -- (-1.6,-3) -- cycle ;
  \draw[decorate, decoration={snake, amplitude=0.05cm, segment length=4.5mm}] (0,0) -- (0.3,-3) node[midway,left] {\(P_{u^-}\)};
\end{scope}
\begin{scope}[shift={(u12)}]
  \draw (1.6,-3) -- (0,0) -- (-1.6,-3) -- cycle ;
  \draw[decorate, decoration={snake, amplitude=0.06cm, segment length=5mm}] (0,0) -- (-0.1,-3) node[midway, right] {\(P_{u^+}\)};
\end{scope}

% descendants in V(G)
\node at (\td * \s + 0.5 * \s, - 0.5 * \s) {$\frac{n}{8 \cdot 3^{t+1}} \leqslant |\desc(u) \cap V(G)| \leqslant \frac{n}{4}$} ;
\node at (1.8 * \s, -1.5 * \s) {$\leqslant \frac{n}{4}$} ;
\node at (\tm * \s + 0.2 * \s, -1.5 * \s) {$\leqslant \frac{n}{4}$} ;

  \end{tikzpicture}
  \caption{Lower bound of $\frac{n}{8 \cdot 3^{t+1}}$ and upper bound of $\frac{3n}{4}$ in the number of vertices of $G$ below the separator~$S$.}
  \label{fig:balanced-sep}
 \end{figure}
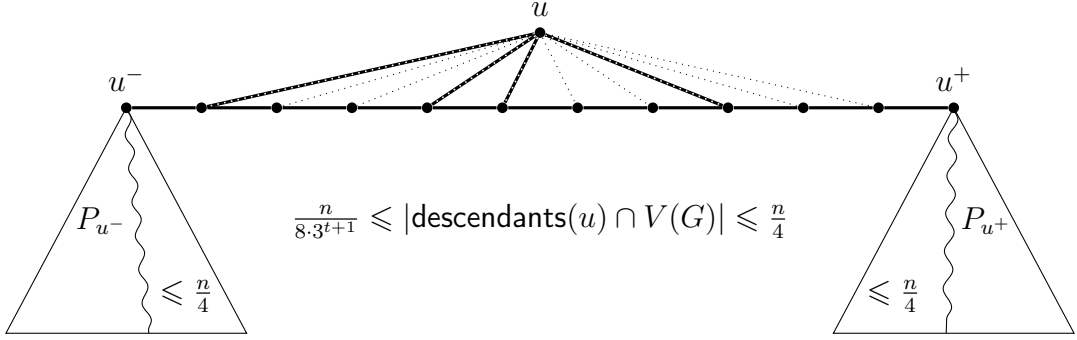
 
 By~\cref{obs:separator}, the descendants of $u$, and part of the descendants of $u^-$ and of $u^+$, are disconnected from the other vertices of $G-S$.
 By \cref{lem:balanced-vertex}, $S$ is an~$\alpha$-balanced separator with $\alpha := 1-\frac{1}{8 \cdot 3^{t+1}}$.
 Indeed there are at~least~$\frac{n}{8 \cdot 3^{t+1}}$ vertices of $G-S$ \emph{under $S$} (i.e., within the descendants of $u$, $u^-$, or $u^+$), and at~most $\frac{n}{4} + \frac{n}{4} + \frac{n}{4}= \frac{3n}{4}$.
 The latter holds because $u^-$ and $u^+$ are below the layer of~$u$ so, in particular, they have at~most~$n/4$ descendants in~$X$; see~\cref{fig:balanced-sep}. 

 Thus $G$ (and its subgraphs) has an~$\alpha$-balanced separator of size at~most~$3t+2|V(H)|+1$, and we conclude by~\cref{lem:tw-sn-convert}.
 \end{proof}

 \cref{lem:main,obs:tw-of-Ct} establish~\cref{thm:main-bdd-tw}, and hence~\cref{thm:main-one-graph}.

 \medskip

 As mentioned in the introduction, a slight modification to the construction of $G_t$ yields a triangle-free layered wheel $W'_t$ whose hereditary closure satisfies~\cref{thm:main-triangle-free}.
 The construction of ${G'_{t}}$, ${T'_{t}}$, and ${W'_{t}}$ proceeds like the previous construction, with the only difference being in the definition of $\textsf{children}(u)$.

For every $u \in L_i$ taken from left to right, where the first (resp.~last) vertex introduced in~$L_i$ is leftmost (resp.~rightmost), initialize $\textsf{children}(u)$ to the empty set.
Let $\mathcal{B}$ be the set of monochromatic red cliques in $G'_{t}[N^\uparrow[u]]$.
For every ordered pair $(B,R)$ of disjoint subsets of $N^\uparrow[u]$ such that $B \in \mathcal{B}$ and $|B \cup R| \leqslant t$, append a vertex $v_{B,R}$ to $L_{i+1}$ at its rightmost end, putting the new edge in $E_B(G'_{t})$.
Add $v_{B, R}$ to the set $\textsf{children}(u)$.
In $G'_{t}$, add a black edge between $v_{B,R}$ and every vertex of~$B$ and a red edge between $v_{B,R}$ and every vertex of~$R$.
Then append a vertex $v'_{B, R}$ to $L_{i+1}$ at its rightmost end, putting the new edge in $E_B(G'_{t})$ and vertex $v'_{B, R}$ in the set $\textsf{children}(u)$.

\medskip

It is clear that many properties of the previous construction are retained or only slightly altered.
In particular, $W'_{t}$ has unbounded treewidth and is upward-restricted (indeed, for every $u \in V(G'_{t})$, we have that $|N^\uparrow[u]| \leqslant t+1$), and every node of~$T'_{t}$ has fewer than~$2 \cdot 3^{t+1}$ children.

\begin{observation}\label{obs:C't-clique-number}
  Every graph of the hereditary closure $\mathcal C'_{t}$ of $W'_t$ is triangle-free.
\end{observation}

\begin{proof}
  Let $u\in V(W'_{t})$.
  By construction, any two descendants of $u$ that are adjacent to $u$ in $W'_{t}$ and in the same layer, are at least at distance two from each other in their layer and hence do not form a triangle.
    Further, all ancestors of $u$ with black edges to $u$ form a~red clique in $G'_{t}$ and hence an independent set in $W'_{t}$.
    It follows that no vertex is adjacent to both endvertices of an edge in $W'_{t}$.
\end{proof}

We also have:

\begin{lemma}[Bounded-branch property]\label{lem:G't-bounded-ray-tw}
  For every chordal trigraph $H$ with $\mathcal T(H)$ of clique number at~most~$t+1$, and whose real graph is triangle-free, every set $X \subseteq V(G'_{t})$ such that $G'_{t}[X]$ is $H$-free, and every $u \in V(G'_{t})$, there exists a~downward path $P_u$ from~$u$ satisfying $$|V(P_u) \cap X| \leqslant |V(H)|-1.$$
\end{lemma}

This follows directly from the proof for~\cref{lem:bounded-ray-tw} by observing that for a chordal trigraph $H$ with $\mathcal T(H)$ of clique number at~most~$t+1$, and whose real graph $H'$ is triangle-free, a~tree representation $T_H$ satisfies that $N_{H'}(v) \cap \anc_{T_H}(v)$ is an independent set and hence forms a monochromatic red clique in $\mathcal T(H)$.

It is then possible to conclude as in~\cref{lem:main}, with some minor alterations; we skip the details, since \cref{thm:main-triangle-free} also follows from~\cref{thm:small-tw-bbp} (which we prove in the next section).

 \section{Key lemmas on abstract layered wheels}

 We finish this paper by proving~\cref{thm:small-tw-bbp,thm:lw-log-tw-ub}, and providing some evidence towards \cref{conj:no-high-girth}.
 The first goal essentially requires adapting~\cref{lem:balanced-vertex}, which uses that the layered wheels $\mathcal T(G_t)$ and $W_t$ are ($3^{t+1}$-)bounded, an assumption which we want to drop. 

 \begin{lemma}\label{lem:balanced-vertex-unbounded}
   For every layered wheel $W$ with rooted tree $T$, for every integer $n \geq 8$ and $n$-vertex induced subgraph $G$ of $W$, either
   \begin{compactitem}
   \item there is some $u \in V(T)$ such that there are at least $\frac{n}{16}$ and at most $\frac{n}{4}$ descendants of $u$ that are in~$V(G)$, and for every node $v \in V(T)$ on the layer of $u$, $v$ also has at~most~$\frac{n}{4}$ descendants in~$V(G)$, or
   \item there is some $u \in V(T)$ and a~child $u^+$ of $u$ such that the total number of descendants in $V(G)$ of all children of $u$ both to the right of the leftmost child of~$u$ and to the left of $u^+$ is at~least $\frac{n}{16}$ and at~most~$\frac{n}{8}$.  
   \end{compactitem}
 \end{lemma}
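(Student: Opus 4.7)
My plan is to adapt the descent of~\cref{lem:balanced-vertex} to unbounded tree degrees by descending layer-by-layer rather than branch-by-branch, allowing the algorithm to stop either at a single node realizing outcome~(1) or at a prefix of children realizing outcome~(2). The key conceptual shift is that the ``loss factor'' between consecutive levels is no longer controlled by the maximum degree of $T$, so when no single child is ``heavy enough'' we are forced to aggregate a consecutive block of siblings.

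I would first locate the right layer: let $i^\ast$ be the smallest layer index such that every node on layer $i^\ast$ has at most $n/4$ descendants in $V(G)$. Such an $i^\ast$ exists because $V(G)$ is finite and each layer of $T$ is finite, so descendant counts in $V(G)$ vanish at sufficiently deep layers. Since the root has all $n > n/4$ descendants, $i^\ast \geq 1$, and I pick any node $w$ on layer $i^\ast - 1$ with more than $n/4$ descendants in $V(G)$.

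Next I case-split on the children of~$w$, all of which lie on layer $i^\ast$ and so individually have at most $n/4$ descendants in $V(G)$. If some child $u$ has at least $n/16$ descendants, then $u$ witnesses outcome~(1): the layer-wide $\leq n/4$ bound is automatic by minimality of $i^\ast$. Otherwise every child of~$w$ has strictly fewer than $n/16$ descendants in~$V(G)$; I set $u := w$ and search for $u^+$ as follows. Writing the children of~$u$ as $c_1,\dots,c_k$ from left to right and $d(c)$ for the number of descendants of $c$ in $V(G)$, the partial sums $S_j := \sum_{i=2}^{j-1} d(c_i)$ start at $S_2 = 0$ and grow by strictly less than $n/16$ at each step. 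A short calculation using $d(w) > n/4$ together with $d(c_1), d(c_k) < n/16$ shows $S_k > n/8 - 1$, which is at least $n/16$ once $n \geq 8$. Hence the first $j$ with $S_j \geq n/16$ satisfies $S_j < 2\cdot n/16 = n/8$, and setting $u^+ := c_j$ yields outcome~(2).

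The main pitfall to avoid is the layer-wide clause of outcome~(1): we need \emph{every} node on the layer of~$u$ to have at most $n/4$ descendants, not only~$u$. Defining $i^\ast$ as the \emph{smallest} qualifying layer—rather than descending greedily to a heaviest child, as one does in the bounded case—cleanly enforces this. A secondary verification is that in Case~B the interior range of children is non-empty (i.e., $k \geq 3$), which follows from the incompatibility of $d(w) > n/4$ with $k$ children each strictly below $n/16$ plus at most one for $w$ itself; this also retroactively rules out Case~B for $n$ close to~$8$, where outcome~(1) alone always applies.
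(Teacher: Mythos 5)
Your proof is correct and takes a genuinely different route from the paper, and in fact a more robust one for the harder half of the statement.

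The paper descends greedily from the root: while the current node has at least $n/4$ descendants in $V(G)$, move to a child with the most descendants in $V(G)$ provided that child has at least $n/16$; if the loop exits because the current node dropped below $n/4$, the paper declares that outcome~(1) holds. You instead pick the \emph{smallest} layer index $i^\ast$ at which \emph{every} node of that layer has at most $n/4$ descendants in $V(G)$, then take $w$ on layer $i^\ast-1$ with more than $n/4$ descendants and case-split on its children. Both give the $[n/16,n/4]$ window in Case~A and reduce Case~B to the same ``partial sums of light siblings'' argument, with essentially identical arithmetic. The conceptual difference is how the \emph{layer-wide} clause of outcome~(1) is obtained: your definition of $i^\ast$ gives it by fiat, whereas it is not clear that the paper's greedy descent delivers it — the node at which the descent first drops below $n/4$ need not sit on an all-light layer (a sibling branch that was abandoned earlier can still carry more than $n/4$ of $V(G)$ at the same depth). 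Since that clause is exactly what \cref{lem:main} and \cref{thm:small-tw-bbp} later invoke to bound the descendant counts of $u^-$ and $u^+$, your version is the one that cleanly supports the downstream use.

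Two small points of arithmetic hygiene in your write-up. First, the line ``$S_k>n/8-1$, which is at least $n/16$ once $n\geq 8$'' is off: $n/8-1\geq n/16$ requires $n\geq 16$, not $n\geq 8$. You do acknowledge that Case~B is vacuous for small $n$, but the reason you give ($k\geq 3$) is not quite the one that closes this gap; the clean statement is that for $8\leq n<16$ one has $n/16<1$, so in Case~B every child of $w$ has $0$ descendants in $V(G)$, whence $d(w)\leq 1<2\leq n/4$, a contradiction — so Case~B can only occur when $n\geq 16$, at which point $S_k>n/8-1\geq n/16$ holds. Second, when you choose the first $j$ with $S_j\geq n/16$, it is worth saying explicitly that $S_{j-1}<n/16$ and $d(c_{j-1})<n/16$ give $S_j<n/8$, and that $j\leq k$ (so $u^+:=c_j$ exists) because $S_k\geq n/16$. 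With these touch-ups the argument is complete and, if anything, slightly tighter than the one in the paper.
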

 \begin{proof}
   Start at the root of $T$, and while the number of descendants in~$V(G)$ of the current node $w$ is at~least~$\frac{n}{4}$ (first stopping condition) move to a child of $w$ with the largest number of descendants in $V(G)$ if this number is at~least~$\frac{n}{16}$, and stop if none exists (second stopping condition).

   If the while loop stops because of the first condition, the first item of the disjunction holds.
   If the while loop stops because of the second condition, let $u \in V(T)$ be the vertex we arrived at.
   Number $u_1, u_2, \ldots, u_h$ the children of $u$ from left to right.
   Let $p \in [h]$ be the smallest index such that the total number of descendants in $V(G)$ of vertices in $\{u_2, \ldots, u_p\}$ is at~least $\frac{n}{16}$ and at~most~$\frac{n}{8}$.
   By construction, for every child~$u_i$ of $u$ the number of descendants in $V(G)$ of~$u_i$ is less than $\frac{n}{16}$.
   Thus, as the number of descendants in $V(G)$ of~$u$ is at~least~$\frac{n}{4}$, $p$ is well-defined and at~most~equal to~$h-1$.
   Vertex $u^+$ is then defined as~$u_{p+1}$, and the second item holds.
 \end{proof}

 We are now equipped for proving~\cref{thm:small-tw-bbp}, which we recall.

 \begin{reptheorem}{thm:small-tw-bbp}
  For every neat, upward-restricted layered wheel $W$, every class of finite graphs satisfying the bounded-branch property in $W$ has bounded treewidth.
\end{reptheorem}
 \begin{proof}
   Fix any neat, $t$-upward-restricted layered wheel $W$ with rooted tree~$T$.
   Let $\mathcal C$ be a~class of finite induced subgraphs of~$W$ satisfying the bounded-branch property for some integer~$h$.
   Fix any $G \in \mathcal C$.
   Let $u \in V(T)$ be the vertex given by applying \cref{lem:balanced-vertex-unbounded} to $T, G$.

   If $u$ satisfies the first item of~\cref{lem:balanced-vertex-unbounded}, we note that \cref{obs:separator} only uses the fact that the layered wheel is neat (for the downward paths to intersect every layer below their starting point) and upward-restricted.
   We then conclude, following the proof of~\cref{lem:main}, that $G$ has a~$\frac{15}{16}$-balanced separator of size at most~$3(t+1)+2h$.

   Let us thus assume that $u$ and its child $u^+$ satisfy the second item of~\cref{lem:balanced-vertex-unbounded}.
   Let $u^-$ be the leftmost child of~$u$.
   Let $P_{u^-}$ (resp.~$P_{u^+}$) be any downward path in $T$ from $u^-$ (resp.~$u^+$) such that $|V(P_{u^-}) \cap V(G)| \leqslant h$ (resp.~$|V(P_{u^+}) \cap V(G)| \leqslant h$).
   As the layered wheel is neat, these downward paths are infinite.
   Similarly to~\cref{obs:separator}, \[S := N^{\uparrow}[u] \cup N^{\uparrow}[u^-] \cup N^{\uparrow}[u^+] \cup P_{u^-} \cup P_{u^+} = N^{\uparrow}[u] \cup P_{u^-} \cup P_{u^+}\]
   separates in $G$ the descendants of children between $u^-$ and $u^+$ in the left-to-right order to the rest of the graph.
   Moreover, this number of descendants is at~least $\frac{n}{16}$ and at~most~$\frac{n}{8} + \frac{2n}{16} = \frac{n}{4}$.
   This means that $G$ has a~$\frac{15}{16}$-balanced separator of size at most~$t+1+2h$.

   We conclude by~\cref{lem:tw-sn-convert} that $\tw(G) \leqslant 15 \cdot \frac{\log(2/3)}{\log(15/16)} \cdot (3(t+1)+2h)$.
 \end{proof}

 We can further adapt the proof of~\cref{thm:small-tw-bbp} to show that finite induced subgraphs of neat, upward-restricted layered wheels have treewidth at~most logarithmic in their number of vertices.
 
 \begin{reptheorem}{thm:lw-log-tw-ub}
   For every neat, $t$-upward-restricted layered wheel $W$ on rooted tree~$T$, every $n$-vertex induced subgraph of $W$ has treewidth at~most~\[15 \cdot \frac{\log(2/3)}{\log(15/16)} \cdot (3(t+1)+2 h \log n)=O(t+\log n),\]
   for some finite integer $h$ depending only on $T$.
 \end{reptheorem}
 \begin{proof}
   In the proof of~\cref{thm:small-tw-bbp}, every time a downward path from a~specific vertex $v$ is added to a~balanced separator, choose a~downward path from $v$ that contains as few vertices of $G$ as possible.
   By definition of a~layered wheel, there is an integer $h$ such that $T$ has no path of length $h$ only consisting of degree-2 vertices.
   Thus, from any node $v$ with fewer than~$n$ descendants in $V(G)$, there is a~downward path from $v$ containing at~most~$h \log n$ vertices of~$G$, by greedily choosing a~branch with fewer descendants in~$V(G)$.
 \end{proof}

 On the other hand, the class of the finite induced subgraphs of any proper, bounded layered wheel has at~least logarithmic treewidth. 

 \begin{observation}\label{obs:lw-log-tw-lb}
   For every proper, $t$-bounded layered wheel $W$ with rooted tree~$T$, and every positive integer $n'$, $W$ admits an $n$-vertex induced subgraph $G$ such that $n \geqslant n'$ and \[\tw(G) \geqslant \frac{\log n}{\log t} - 1.\] 
 \end{observation}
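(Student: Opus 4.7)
The plan is to mimic and abstract the proof of~\cref{obs:tw-of-Ct}: pick $G := W[L_{\leqslant i}]$ for a suitably large $i$, lower-bound $\tw(G)$ by exhibiting a bramble of order $i+1$ in $G$, and upper-bound $n := |V(G)|$ in terms of $i$ using the $t$-boundedness of $W$. Since $T$ is infinite and locally finite, K\"onig's lemma gives an infinite branch, so every layer $L_j$ is non-empty and $|L_{\leqslant i}|$ grows without bound with $i$; this allows us to choose $i$ such that $n \geqslant n'$.

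For the lower bound, I would argue exactly as in~\cref{obs:tw-of-Ct}: each layer $L_j$ induces a (connected) path by the layer condition, and by properness every pair $L_j, L_k$ with $j \neq k$ has at least one edge between them. Thus the sets $L_0, L_1, \ldots, L_i$ form a bramble in $G$, and since they are pairwise disjoint any hitting set must pick at least one vertex from each, so the bramble has order $i+1$. By the Seymour--Thomas duality $\tw = \bn - 1$, this gives $\tw(G) \geqslant i$.

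For the upper bound on $n$, the $t$-boundedness of $W$ yields $|L_{j+1}| \leqslant t \cdot |L_j|$ and hence $|L_j| \leqslant t^j$ (since $|L_0|=1$), giving $n \leqslant \sum_{j=0}^{i} t^j \leqslant t^{i+1}$ for $t \geqslant 2$ (the case $t=1$ makes the claimed bound vacuous and can be ignored). Rearranging gives $i \geqslant \log n / \log t - 1$, which combined with $\tw(G) \geqslant i$ concludes the proof. There is no real obstacle here: the whole argument is simply the abstract counterpart of~\cref{obs:tw-of-Ct}, paired with the elementary counting bound afforded by $t$-boundedness.
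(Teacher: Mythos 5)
Your proof is correct and follows the same strategy as the paper: take $G := W[L_{\leqslant i}]$, lower-bound $\tw(G) \geqslant i$ via the bramble $\{L_0, \ldots, L_i\}$ exactly as in \cref{obs:tw-of-Ct} (which, as the paper notes, only uses properness), and upper-bound $n \leqslant t^{i+1}$ via $t$-boundedness. The minor stylistic differences---you invoke K\"onig's lemma for layer non-emptiness (unnecessary, since properness already forces every layer to be non-empty) and you dismiss $t=1$ by noting the bound is vacuous, whereas the paper observes that a proper $1$-bounded layered wheel is a clique---do not alter the argument.
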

 \begin{proof}
   We can assume that $t \geqslant 2$, as every proper 1-bounded layered wheel is a~clique.
   Let $i$ be the smallest integer such that the subgraph $G$ induced by the first $i + 1$ layers of $W$ has at least $n'$ vertices.
   By~\cref{obs:tw-of-Ct}, which only uses the fact that the layered wheel is proper, $\tw(G) \geqslant i$.
   As the layered wheel is $t$-bounded, $n \leqslant t^{i + 1}$, so $i \geqslant \log n / \log t - 1$.
 \end{proof}

 By~\cref{thm:lw-log-tw-ub,obs:lw-log-tw-lb} the class of finite induced subgraphs of any proper, neat, bounded, upward-restricted layered wheel has logarithmic treewidth.

We finish with the following lemma, which shows that a construction answering \cref{conj:no-high-girth} negatively cannot come from the realm of (proper) layered wheels:

\begin{lemma} \label{lem:no-high-girth-wheels}
Let $H$ be a graph of girth at least $5$ and minimum degree at least $4$. Then for every $t \in \mathbb N$, every proper layered wheel $W$ with girth at least 5 contains an $H$-free induced subgraph of treewidth at least $t$. 
\end{lemma}

\begin{proof}
Let $L_0, L_1, \dots$ be the layers of $W$. For $t \in \mathbb N$, we recursively construct a set $X_t$ consisting of $t$ layers of $W$ (and thus with $\tw(W[X_t]) \geq t$) such that $W[X_t]$ is $H$-free.

Begin with $X_1 = L_0 =: L_{i_1}$, and suppose $X_k = L_{i_1} \cup \dots \cup L_{i_k}$ has been constructed (for some $k \geq 1$). We note that, if for some $i > i_k$, we have that $S \subseteq X_k \cup L_i$ induces a copy of $H$ in $W$, then $S \cap L_i \neq \emptyset$. Moreover, since $H$ has minimum degree at least 4 and $L_i$ induces a path, any vertex $x \in S \cap L_i$ has at least two neighbors in $X_k$. Since $W$ has girth at least 5, no two vertices of $W$ have two neighbors in common. In particular, there are at most ${|X_k| \choose 2}$ vertices in $V(W) \setminus X_k$ with two neighbors in $X_k$ and so at most as many layers $L_i$ such that $W[X_k \cup L_i]$ is not $H$-free. We let $j > i_k$ be smallest such that $W[X_k \cup L_j]$ is $H$-free, and set $X_{k + 1} := X_k \cup L_j$. This concludes the proof.
\end{proof}

\section*{Acknowledgment}

This paper has substantially benefited from \href{https://www.dagstuhl.de/25041}{Dagstuhl Seminar 25041}, ``Solving Problems on Graphs: From Structure to Algorithms'', where three of the authors met to work on the topic.
  
\bibliographystyle{plain}

\end{document}